\newtheorem{example}{Example}
\newcommand\cC{{\mathcal{C}}}
\newcommand\cE{{\mathcal{E}}}
\newcommand\cP{{\mathcal{P}}}
\newcommand\cH{{\mathcal{H}}}
\newcommand\cQ{{\mathcal{Q}}}
\newcommand\cS{{\mathcal{S}}}
\newcommand\SH{SH}%{{\mathcal{S \!\! H}}}
\newcommand{\ip}[2]{\langle#1,#2\rangle}
\DeclareMathOperator{\intt}{int}
\DeclareMathOperator{\volume}{vol} % 
\DeclareMathOperator*{\esssup}{ess\,sup} %
\DeclareMathOperator*{\essinf}{ess\,inf} %
\newcommand{\Frame}{S}
\newcommand{\expo}[1]{\mathrm{e}^{#1}}
\newcommand{\inv}[2][]{\ensuremath{#2^{#1-1}}}
\newcommand{\vol}[1]{\volume{(#1)}}
\newcommand{\card}[1]{\# \abs{#1}}
\newcommand{\cardsmall}[1]{\# \abssmall{#1}}
\newcommand{\charfct}[1]{\ensuremath{\chi_{#1}}}
\newcommand{\eps}{\ensuremath{\varepsilon}}
\newcommand*{\numbersys}[1]{\ensuremath{\mathbb{#1}}} 
\newcommand*{\C}{\numbersys{C}}
\newcommand*{\R}{\numbersys{R}}
\newcommand*{\Rn}{\numbersys{R}^n}
\newcommand*{\Z}{\numbersys{Z}}
\newcommand*{\N}{\numbersys{N}}
\newcommand{\itvoc}[2]{\ensuremath{\left({#1},{#2}\right]}} % InTerValOpenClosed
\newcommand{\itvoo}[2]{\ensuremath{\left({#1},{#2}\right)}} % 
\newcommand{\itvcc}[2]{\ensuremath{\left[{#1},{#2}\right]}} % 
\newcommand{\itvco}[2]{\ensuremath{\left[{#1},{#2}\right)}} % 
\newcommand{\abs}[1]{\ensuremath{\left\lvert#1\right\rvert}}
\newcommand{\abssmall}[1]{\ensuremath{\lvert#1\rvert}}
\newcommand{\absbig}[1]{\ensuremath{\bigl\lvert#1\bigr\rvert}}
\newcommand{\absBig}[1]{\ensuremath{\Bigl\lvert#1\Bigr\rvert}}
\newcommand{\norm}[2][]{\ensuremath{\left\lVert#2\right\rVert_{#1}}}
\newcommand{\normBig}[2][]{\ensuremath{\Bigl\lVert#2\Bigr\rVert_{#1}}} 
\newcommand{\normsmall}[2][]{\ensuremath{\lVert#2\rVert_{#1}}} 
\newcommand{\innerprod}[3][]{\ensuremath{\left\langle #2,#3\right\rangle_{\! #1}}}
\newcommand{\innerprodbig}[3][]{\ensuremath{\bigl \langle #2,#3\bigr\rangle_{\!\!#1}}}
\newcommand{\innerprods}[2]{\ensuremath{\langle #1,#2\rangle}}
\newcommand{\set}[1]{\ensuremath{\left\lbrace{#1}\right\rbrace}}
\newcommand{\setprop}[2]{\ensuremath{\left\lbrace{#1} : {#2}\right\rbrace}}
\newcommand{\setbig}[1]{\ensuremath{\bigl\lbrace{#1}\bigr\rbrace}}
\newcommand{\setsmall}[1]{\ensuremath{\lbrace{#1}\rbrace}}
\newcommand{\setpropsmall}[2]{\ensuremath{\lbrace{#1} : {#2}\rbrace}}
\newcommand{\ceil}[1]{\left\lceil #1 \right\rceil}
\newcommand{\ceilsmall}[1]{\lceil #1 \rceil}
\newcommand{\floor}[1]{\left\lfloor #1 \right\rfloor}
\newcommand{\scfac}{2^{j\frac{\alpha+2}{4}}}
\newcommand{\pa}{\partial}
\newcommand{\scp}{\ensuremath\alpha}
\newcommand{\vap}{\ensuremath\delta}
\newcommand{\dep}{\ensuremath\gamma}
\newcommand{\cp}{\ensuremath\nu}
\newcommand{\ie}{i.e.,\xspace} %{\textit{i.e.,\xspace}}
\newcommand{\eg}{e.g.,\xspace} %{\textit{e.g.\@\xspace}}
\title{Optimally sparse approximations of 3D functions by compactly supported shearlet frames}
\author{Gitta Kutyniok\thanks{Technische Universit\"at Berlin, Institut f\"ur Mathematik,  10623 Berlin, Germany, E-mail: \url{kutyniok@math.tu-berlin.de}} \and Jakob
  Lemvig\thanks{Technical University of Denmark, Department of Mathematics, Matematiktorvet 303, 2800 Kgs. Lyngby, Denmark,
    E-mail: \url{J.Lemvig@mat.dtu.dk}} \and Wang-Q Lim\thanks{Technische Universit\"at Berlin, Institut f\"ur Mathematik,  10623 Berlin, Germany, E-mail: \url{lim@math.tu-berlin.de}} }
\begin{document}

\maketitle

\begin{abstract}
  We study efficient and reliable methods of capturing and sparsely representing an\-iso\-tro\-pic
  structures in 3D data. As a model class for multidimensional data with anisotropic features, we
  introduce generalized three-dimensional cartoon-like images. This function class will have two
  smoothness parameters: one parameter $\beta$ controlling classical smoothness and one parameter
  $\alpha$ controlling anisotropic smoothness. The class then consists of piecewise $C^\beta$-smooth
  functions with discontinuities on a piecewise $C^\alpha$-smooth surface. We introduce a
  pyramid-adapted, hybrid shearlet system for the three-dimensional setting and construct frames for
  $L^2(\R^3)$ with this particular shearlet structure. For the smoothness range $1<\alpha\le
  \beta\le 2$ we show that pyramid-adapted shearlet systems provide a nearly optimally sparse
  approximation rate within the generalized cartoon-like image model class measured by means of
  non-linear $N$-term approximations.
\end{abstract}

\begin{keywords}
  anisotropic features, multi-dimensional data, shearlets, cartoon-like images, non-linear
  approximations, sparse approximations
\end{keywords}

\begin{AMS}
  Primary: 42C40, Secondary: 42C15, 41A30, 94A08
\end{AMS}

\pagestyle{myheadings} \thispagestyle{plain} \markboth{G. KUTYNIOK, J. LEMVIG, AND W.-Q
  LIM}{APPROXIMATIONS OF 3D FUNCTIONS BY COMPACTLY SUPPORTED SHEARLETS}

%*******************************************************************************************

\section{Introduction}
\label{sec:introduction}
Recent advances in modern technology have created a new world of huge, multi-dimensional data. In
biomedical imaging, seismic imaging, astronomical imaging, computer vision, and video processing,
the capabilities of modern computers and high-precision measuring devices have generated 2D, 3D and
even higher dimensional data sets of sizes that were infeasible just a few years ago. The need to
efficiently handle such diverse types and huge amounts of data has initiated an intense study in
developing efficient multivariate encoding methodologies in the applied harmonic analysis research
community. In neuro-imaging, \eg fluorescence microscopy scans of living cells, the discontinuity
curves and surfaces of the data are important specific features since one often wants to distinguish
between the image ``objects'' and the ``background'', \eg to distinguish actin filaments in eukaryotic
cells; that is, it is important to precisely capture the edges of these 1D and 2D structures. This
specific application is an illustration that important classes of multivariate problems are governed
by \emph{anisotropic features}. The anisotropic structures can be distinguished by location
\emph{and} orientation or direction which indicates that our way of analyzing and representing the
data should capture not only location, but also directional information. This is exactly the idea
behind so-called directional representation systems which by now are well developed and understood
for the 2D setting. Since much of the data acquired in, \eg neuro-imaging, are truly three-dimensional,
analyzing such data should be performed by three-dimensional directional representation systems.
Hence, in this paper, we therefore aim for the 3D setting.

In applied harmonic analysis the data is typically modeled in a continuum setting as
square-integrable functions or distributions. In dimension two, to analyze the ability of
representation systems to reliably capture and sparsely represent anisotropic structures, Cand\'es
and Donoho~\cite{CD04} introduced the model situation of so-called cartoon-like images, \ie
two-dimensional functions which are piecewise $C^2$-smooth apart from a piecewise $C^2$
discontinuity curve. Within this model class there is an optimal sparse approximation rate one can
obtain for a large class of non-adaptive and adaptive representation systems. Intuitively, one
should think \emph{adaptive} systems would be far superior in this task, but it has
been shown in recent years that non-adaptive methods using curvelets, contourlets, and shearlets all
have the ability to essentially optimal sparsely approximate cartoon-like images in 2D measured by
the $L^2$-error of the best $N$-term approximation \cite{CD04, GL07, DV05, KL10}.

\subsection{Dimension three}
\label{sec:dimension-three}
In the present paper we will consider sparse approximations of cartoon-like images using shearlets
in dimension \emph{three}. The step from the one-dimensional setting to the two-dimensional setting
is necessary for the appearance of anisotropic features at all. When further passing from the
two-dimensional setting to the three-dimensional setting, the complexity of anisotropic structures
changes significantly. In 2D one ``only'' has to handle one type of anisotropic features, namely
curves, whereas in 3D one has to handle \emph{two} geometrically very different anisotropic
structures: Curves as one-dimensional features and surfaces as two-dimensional anisotropic features.
Moreover, the analysis of sparse approximations in dimension two depends heavily on reducing the
analysis to affine subspaces of $\R^2$. Clearly, these subspaces always have dimension and
co-dimension one in 2D. In dimension three, however, we have subspaces of co-dimension one and two,
and one therefore needs to perform the analysis on subspaces of the ``correct'' co-dimension.
Therefore, the 3D analysis requires fundamental new ideas. 

Finally, we remark that even though the present paper only deals with the construction of shearlet
frames for $L^2(\R^3)$ and sparse approximations of such, it also illustrates how many of the
problems that arises when passing to higher dimensions can be handled. Hence, once it is known how
to handle anisotropic features of different dimensions in 3D, the step from 3D to 4D can be dealt
with in a similar way as also the extension to even higher dimensions. Therefore the extension of
the presented result in $L^2(\R^3)$ to higher dimensions $L^2(\R^n)$ should be, if not
straightforward, then at least be achievable by the methodologies developed.

\subsection{Modelling anisotropic features}
\label{sec:modell-anis-feat}
The class of 2D cartoon-like images consists, as mentioned above, of piecewise $C^2$-smooth
functions with discontinuities on a piecewise $C^2$-smooth curve, and this class has been
investigated in a number of recent publications. The obvious extension to the 3D setting is to
consider functions of three variables being piecewise $C^2$-smooth function with discontinuities on
a piecewise $C^2$-smooth surface. In some applications the $C^2$-smoothness requirement is too
strict, and we will, therefore, go one step further and consider a larger class of images also
containing less regular images. The generalized class of cartoon-like images in 3D considered in
this paper consists of three-dimensional piecewise $C^\beta$-smooth functions with discontinuities
on a piecewise $C^\alpha$ surface for $\alpha \in \itvoc{1}{2}$. Clearly, this model provides us
with two new smoothness parameters: $\beta$ being a classical smoothness parameter and $\alpha$
being an anisotropic smoothness parameter, see Figure~\ref{fig:cartoon-piecewise} for an illustration. 
\begin{figure}[ht]
\centering
\includegraphics{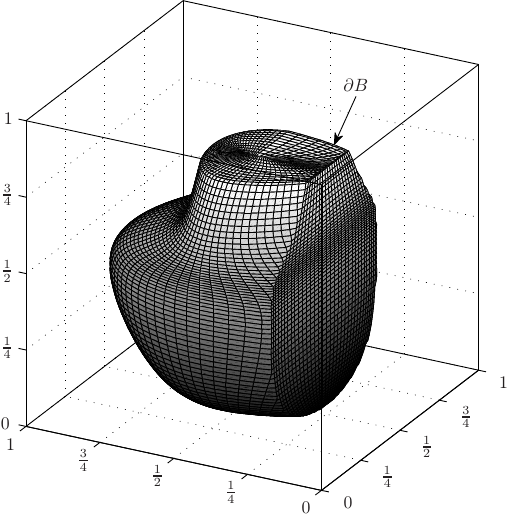}
\caption{The support of a 3D cartoon-like image $f=f_0 \chi_B$, where
  $f_0$ is $C^\beta$ smooth with $\supp f_0 = \R^3$ and the
  discontinuity surface $\pa B$ is piecewise $C^\alpha$ smooth.}
 \label{fig:cartoon-piecewise}
\end{figure}
This image class is unfortunately not a linear space as
traditional smoothness spaces, \eg H\"older, Besov, or Sobolev spaces, but it allows one to study
the quality of the performance of representation systems with respect to capturing anisotropic
features, something that is not possible with traditional smoothness spaces.

Finally, we mention that allowing \emph{piecewise} $C^\alpha$-smoothness and not everywhere $C^\alpha$-smoothness is an essential way to model singularities along surfaces \emph{as well as}
along curves which we already described as the two fundamental types of anisotropic phenomena in 3D.

\subsection{Measure for Sparse Approximation and Optimality}
\label{sec:meas-sparse-appr}

The quality of the performance of a representation system with respect to cartoon-like images is
typically measured by taking a non-linear approximation viewpoint. More precisely, given a
cartoon-like image and a representation system, the chosen measure is the asymptotic behavior of the
$L^2$ error of $N$-term (non-linear) approximations in the number of terms $N$. When the anisotropic
smoothness $\alpha$ is bounded by the classical smoothness as $\alpha \le \frac{4}{3} \beta$, the
anisotropic smoothness of the cartoon-like images will be the determining factor for the optimal
approximation error rate one can obtain. To be more precise, as we will show in
Section~\ref{sec:optimal-sparsity}, the optimal approximation rate for the generalized 3D
cartoon-like images models $f$ which can be achieved for a large class of adaptive and non-adaptive
representation systems for $1 < \alpha \le \beta \le 2$ is
\[
\norm[L^2]{f-f_N}^2 \le C \cdot N^{-\alpha/2} \qquad \text{as } N \to \infty,
\]
for some constant $C>0$, where $f_N$ is an $N$-term approximation of $f$. For cartoon-like images,
wavelet and Fourier methods will typically have an $N$-term approximation error rate decaying as
$N^{-1/2}$ and $N^{-1/3}$ as $N \to \infty$, respectively, see \cite{KLL11}. Hence, as the
anisotropic smoothness parameter $\alpha$ grows, the approximation quality of traditional tools
becomes increasingly inferior as they will deliver approximation error rates that are \emph{far
  from} the optimal rate $N^{-\alpha/2}$. Therefore, it is desirable and necessary to search for new
representation systems that can provide us with representations with a more optimal rate. This is where
pyramid-adapted, hybrid shearlet systems enter the scene. As we will see in
Section~\ref{sec:optimal-sparsity-3d}, this type of representation
system 
provides nearly optimally sparse approximations:
\begin{equation*}
  \norm[L^2]{f-f_N}^2 \le
  \left\{\begin{aligned} 
      C \cdot N^{-\scp/2+\tau}, &\qquad    \text{if }\beta \in \itvco{\alpha}{2},\\
      C \cdot N^{-1}(\log{N})^2 , &\qquad  \text{if }  \beta=\alpha=2,
    \end{aligned}
  \right\} \quad 
  \text{ as $N \to \infty$,} 
\end{equation*}
where $f_N$ is the $N$-term approximation obtained by keeping the $N$ largest shearlet coefficients,
and $\tau = \tau(\alpha)$ with $0\le \tau<0.04$ and $\tau \to 0$ for $\alpha \to 1^+$ and for $\alpha
\to 2^-$. Clearly, the obtained sparse approximations for these shearlet systems are not truly
optimal owing to the polynomial factor $\tau$ for $\alpha<2$ and the polylog factor for $\alpha=2$.
On the other hand, it still shows that non-adaptive schemes such as the hybrid shearlet
system can provide rates that are nearly optimal within a large class of adaptive and
non-adaptive methods.

\subsection{Construction of 3D hybrid shearlets}
\label{sec:constr-3d-shearl}

Shearlet theory has become a central tool in analyzing and representing 2D data with anisotropic
features. Shearlet systems are systems of functions generated by one single generator with parabolic
scaling, shearing, and translation operators applied to it, in much the same way wavelet systems are
dyadic scalings and translations of a single function, but including a directionality characteristic
owing to the additional shearing operation and the anisotropic scaling. Of the many directional
representation systems proposed in the last decade, \eg steerable pyramid
transform~\cite{steer1992}, directional filter banks~\cite{directional1992}, 2D directional
wavelets~\cite{twoDwavelet1993}, curvelets~\cite{CD99}, contourlets~\cite{DV05},
bandelets~\cite{bandelets}, the shearlet system~\cite{LLKW05} is among the most versatile and
successful. The reason for this being an extensive list of desirable properties: Shearlet
systems can be generated by one function, they precisely resolve wavefront sets, they
allow compactly supported analyzing elements, they are associated with fast decomposition
algorithms, and they provide a unified treatment of the continuum and the digital realm. 
We refer to \cite{KLL10} for a detailed review of the advantages and disadvantages of
shearlet systems as opposed to other directional representation systems.

Several constructions of discrete band-limited and compactly supported 2D shearlet frames are already known,
see \cite{GKL06,KL07,DKST09,Lim09,KKL10a,DST11}; for construction of 3D shearlet frames less is
known. Dahlke, Steidl, and Teschke~\cite{DST09} recently generalized the shearlet group and the
associated continuous shearlet transform to higher dimensions $\R^n$. Furthermore, in \cite{DST09}
they showed that, for certain band-limited generators, the continuous shearlet transform is able to
identify hyperplane and tetrahedron singularities. Since this transform originates from a unitary
group representation, it is not able to capture all directions, in particular, it will not capture
the delta distribution on the $x_1$-axis (and more generally, any singularity with
``$x_1$-directions''). We will use a different tiling of the frequency space, namely systems adapted
to pyramids in frequency space, to avoid this non-uniformity of directions. We call these systems 
pyramid-adapted shearlet system\cite{KLL10}. In \cite{GL10}, the continuous version of the
pyramid-adapted shearlet system was introduced, and it was shown that the location and the local
orientation of the boundary set of certain three-dimensional solid regions can be precisely
identified by this continuous shearlet transform. Finally, we will also need to use a different
scaling than the one from \cite{DST09} in order to achieve shearlet systems that provide almost
optimally sparse approximations.

Since spatial localization of the analyzing elements of the encoding system is very important both
for a precise detection of geometric features as well as for a fast decomposition algorithm, we will
mainly follow the sufficient conditions for and construction of compactly supported cone-adapted 2D
shearlets by Kittipoom and two of the authors \cite{KKL10a} and extend these result to the 3D
setting (Section~\ref{sec:shearl-high-dimens}). These results provide us with a large class of
separable, compactly supported shearlet systems with ``good'' frame bounds, optimally sparse
approximation properties, and associated numerically stable algorithms. One important new aspect is
that dilation will depend on the smoothness parameter $\alpha$. This will provide us with
\emph{hybrid} shearlet systems ranging from classical parabolic based shearlet systems ($\alpha =2$)
to almost classical wavelet systems ($\alpha \approx 1$). In other words, we obtain a parametrized
family of shearlets with a smooth transition from (nearly) wavelets to shearlets. This will allow us
to adjust our shearlet system according to the anisotropic smoothness of the data at hand. For
rational values of $\alpha$ we can associate this hybrid system with a fast decomposition algorithm
using the fast Fourier transform with multiplication and periodization in the frequency space (in
place of convolution and down-sampling).

Our compactly supported 3D hybrid shearlet elements (introduced in
Section~\ref{sec:shearl-high-dimens}) will in the spatial domain be of size $2^{-j\scp/2}$ times
$2^{-j/2}$ times $2^{-j/2}$ for some fixed anisotropy parameter $1<\scp\le 2$. When $\scp \approx 1$
this corresponds to ``cube-like'' (or ``wavelet-like'') elements. As $\scp$ approaches $2$ the scaling
becomes less and less $isotropic$ yielding ``plate-like'' elements as $j \to \infty$. This indicates
that these anisotropic 3D shearlet systems have been designed to efficiently capture two-dimensional
anisotropic structures, but neglecting one-dimensional structures. Nonetheless, these
3D shearlet systems still perform optimally when representing and analyzing cartoon-like functions
that have discontinuities on \emph{piecewise} $C^\alpha$-smooth surfaces -- as mentioned such
functions model 3D data that contain both point, curve, and surface singularities.

Let us end this subsection with a general thought on the construction of band-limited tight shearlet
frames versus compactly supported shearlet frames. There seem to be a trade-off between
\emph{compact support} of the shearlet generators, \emph{tightness} of the associated frame, and
\emph{separability} of the shearlet generators. The known constructions of tight shearlet frames,
even in 2D, do not use separable generators, and these constructions can be shown to \emph{not} be
applicable to compactly supported generators. Moreover, these tight frames use a modified version of
the pyramid-adapted shearlet system in which not all elements are dilates, shears, and translations
of a single function. Tightness is difficult to obtain while allowing for compactly supported
generators, but we can gain separability as in Theorem~\ref{thm:compact-for-pyramid} hence fast
algorithmic realizations. On the other hand, when allowing non-compactly supported generators,
tightness is possible, but separability seems to be out of reach, which makes fast algorithmic
realizations very difficult.

\subsection{Other approaches for 3D data}
\label{sec:other-approaches-3d}

Other directional representation systems have been considered for the 3D setting. We mention
curvelets~\cite{CDDY06,borup-nielsen}, surflets~\cite{surflets09}, and surfacelets~\cite{Lu-Do}.
This line of research is mostly concerned with constructions of such systems and not their sparse
approximation properties with respect to cartoon-like images. 
In \cite{surflets09}, however, the authors consider adaptive approximations of Horizon class function using
surflet dictionaries which generalizes the wedgelet dictionary for 2D signals to higher dimensions.

During the final stages of this project, we realized that a similar almost optimal sparsity result
for the 3D setting (for the model case $\alpha=\beta=2$) was reported by Guo and
Labate~\cite{GL10_3d} using \emph{band-limited} shearlet tight frames. They provide a proof for the
case where the discontinuity surface is (non-piecewise) $C^2$-smooth using the X-ray transform.

\subsection{Outline}
\label{sec:outline}
We give the precise definition of generalized cartoon-like image model class in
Section~\ref{sec:cartoon}, and the optimal rate of approximation within this model is then derived in
Section~\ref{sec:optimal-sparsity}. In Section~\ref{sec:shearl-high-dimens} and
Section~\ref{sec:constr-comp-supp} we construct the so-called pyramid-adapted shearlet frames with
compactly supported generators. In Sections~\ref{sec:optimal-sparsity-3d} to \ref{sec:proof-theorem-1} we then prove that
such shearlet systems indeed deliver nearly optimal sparse approximations of three-dimensional
cartoon-like images. We extend this result to the situation of discontinuity surfaces which are
\emph{piecewise} $C^\alpha$-smooth except for zero- and one-dimensional singularities and again derive essential optimal
sparsity of the constructed shearlet frames in Section~\ref{sec:proof-theorem-2}. We end the paper
by discussion various possible extensions in Section~\ref{sec:extensions}.

\subsection{Notation}
\label{sec:notation}
We end this introduction by reviewing some basic definitions. The following definitions will mostly
be used for the case $n=3$, but they will however be defined for general $n\in \N$. For $x\in \Rn$
we denote the $p$-norm on $\R^n$ of $x$ by $\norm[p]{x}$.
The Lebesgue measure on
$\Rn$ is denoted by $\abs{\cdot}$ and the counting measure by $\card{\cdot}$. Sets in $\Rn$ are
either considered equal if they are equal up to sets of measure zero or if they are element-wise
equal; it will always be clear from the context which definition is used. The $L^p$-norm of $f \in
L^p(\Rn)$ is denoted by $\norm[L^p]{f}$. For $f \in L^1(\Rn)$, the Fourier transform is defined by
\[
\hat f(\xi) = \int_{\Rn} f(x)\,\expo{-2 \pi i
  \innerprod{\xi}{x}} \,\mathrm{d}x
  \]
  with the usual extension to $L^2(\Rn)$. The Sobolev space and norm are defined as
\[    H^{s} (\R^{n}) = \setprop{f \colon \R^{n} \to \C}{\norm[H^{s}]{f}^2 := \int_{\R^{n}} \big( 1 + | \xi |^{2 }
\big)^{s} \big| \hat{f} (\xi) \big|^{2} \, \mathrm{d} \xi < + \infty}. 
\]
For functions $f : \R^n \to \C$ the homogeneous H\"older seminorm is given by
 \begin{equation*} %\label{eq:holder-norm}
\norm[\dot{C}^\beta]{f}:=\max_{\abs{\gamma} = \floor{\beta}} \;
\sup_{x,x' \in \R^n} \frac{\abs{\pa^{\gamma}
      f(x)-\pa^{\gamma}
      f(x')}}{\norm[2]{x-x'}^{\{\beta\}}}, 
\end{equation*}
where $\{\beta\} = \beta-\floor{\beta}$ is the fractional part of $\beta$ and $\abs{\gamma}$ is the usual length of a
multi-index $\gamma=(\gamma_1,\gamma_2,\dots,\gamma_n)$. Further, we let
\[\norm[{C}^\beta]{f}:= \max_{\gamma \le \floor{\beta}}
\sup_{}{\abs{\pa^\gamma f}} + \norm[\dot{C}^\beta]{f},
\]
and we denote by $C^\beta(\R^n)$ the space of H\"older functions, \ie functions $f : \R^n \to \C$, whose $C^\beta$-norm is
bounded.

\section{Generalized 3D cartoon-like image model class}
\label{sec:cartoon}
The first complete model of 2D cartoon-like images was introduced in \cite{CD04}, the basic idea
being that a closed $C^2$-curve separates two $C^2$-smooth functions. For 3D cartoon-like images we
consider square integrable functions of three variables that are piecewise $C^\beta$-smooth with
discontinuities on a piecewise $C^\scp$-smooth surface. 

Fix $\alpha>0$ and $\beta >0$, and let $\rho: \itvco{0}{2\pi} \times \itvcc{0}{\pi} \to
\itvco{0}{\infty}$ be continuous and define the set $B$ in $\R^3$ by
\[
B = \{x \in \R^3 : \norm[2]{x} \le \rho(\theta_1,\theta_2), x =
(\norm[2]{x},\theta_1,\theta_2) \text{ in spherical coordinates}\}.
\]
We require that the boundary $\partial B$ of $B$ is a closed surface parametrized by
\begin{equation}\label{eq:curve}
  b(\theta_1,\theta_2) = \begin{pmatrix} \rho(\theta_1,\theta_2)\cos(\theta_1)\sin(\theta_2) \\
    \rho(\theta_1,\theta_2)\sin(\theta_1)\sin(\theta_2) \\
    \rho(\theta_1,\theta_2)\cos(\theta_2)\end{pmatrix},
  \quad \theta = (\theta_1,\theta_2) \in \itvco{0}{2\pi} \times \itvcc{0}{\pi}. 
\end{equation}
Furthermore, the radius function $\rho$ must be H\" older continuous with coefficient $\nu$, \ie
\begin{equation}\label{eq:curvebound}
\norm[\dot{C}^\alpha]{\rho}=\max_{\abs{\gamma} = \floor{\alpha}} \; \sup_{\theta,\theta'} \frac{\abs{\pa^{\gamma}
      \rho(\theta)-\pa^{\gamma}
      \rho(\theta')}}{\norm[2]{\theta-\theta'}^{\{\alpha\}}}
 \leq \cp, \quad \rho=\rho(\theta_1,\theta_2), \quad \rho \leq \rho_0 < 1.
\end{equation}

For $\cp > 0$, the set $\mathit{STAR}^\alpha(\cp)$ is defined to be the set of all $B \subset
\itvcc{0}{1}^3$ such that $B$ is a translate of a set obeying \eqref{eq:curve} and
\eqref{eq:curvebound}. The boundary of the surfaces in $\mathit{STAR}^\alpha(\cp)$ will be the
discontinuity sets of our cartoon-like images. We remark that any starshaped sets in
$\itvcc{0}{1}^3$ with bounded principal curvatures will belong to $\mathit{STAR}^2(\cp)$ for some
$\cp$. Actually, the property that the sets in $\mathit{STAR}^\alpha(\cp)$ are parametrized by
spherical angles, which implies that the sets are starshaped, is not important to us. For $\alpha=2$
we could, \eg extend $\mathit{STAR}^2(\cp)$ to be all bounded subset of $\itvcc{0}{1}^3$, whose
boundary is a closed $C^2$ surface with principal curvatures bounded by $\cp$.

To allow more general discontinuities surfaces, we extend $\mathit{STAR}^\alpha(\cp)$ to a class of
sets $B$ with \emph{piecewise} $C^\alpha$ boundaries $\partial B$. We denote this class
$\mathit{STAR}^\alpha(\cp,L)$, where $L \in \N$ is the number of $C^\alpha$ pieces and $\cp > 0$ be
an upper bound for the ``curvature'' on each piece. In other words, we say that $B \in
\mathit{STAR}^\alpha(\cp, L)$ if $B$ is a bounded subset of $\itvcc{0}{1}^3$ whose boundary
$\partial B$ is a union of finitely many pieces $\pa B_1, \dots, \pa B_L$ which do not overlap
except at their boundaries, and each patch $\partial B_i$ can be represented in parametric form
$\rho_l = \rho_l(\theta_1,\theta_2)$ by a $C^\alpha$-smooth radius function with
$\norm[\dot{C}^\alpha]{\rho_l} \le \nu$. We remark that we put no restrictions on how the patches
$\pa B_l$ meet, in particular, $B \in \mathit{STAR}^\alpha(\cp,L)$ can have arbitrarily sharp
edges % or cusp-like?
joining the pieces $\pa B_l$. Also note that
$\mathit{STAR}^\alpha(\cp)=\mathit{STAR}^\alpha(\cp,1)$.

The actual objects of interest to us are, as mentioned, not these starshaped sets, but functions
that have the boundary $\partial B$ as discontinuity surface.

\begin{definition} \label{def:cartoon-3d} %-piecewise}
  Let $\cp, \mu > 0$, $ \scp, \beta \in \itvoc{1}{2}$, and $L \in \N$. Then $\cE^{\beta}_{\alpha,L}(\R^3)$ denotes the
  set of functions $f: \R^3\to \C$ of the form
  \[
  f = f_0 + f_1 \chi_{B},
  \]
  where $B \in \mathit{STAR}^\alpha(\cp,L)$ and $f_i \in C^\beta(\R^3)$ % \cap
                                % H^\beta(\R^3)$ 
  with $\supp f_0 \subset \itvcc{0}{1}^3$ and $\norm[C^\beta]{f_i} \leq \mu$ for each
%  $\sum_{|\gamma| \leq 2} \|\pa^{\gamma}f_i\|_{\infty} \leq 1$ for each
  $i=0,1$. We let $\cE^{\beta}_{\alpha}(\R^3):=
  \cE^{\beta}_{\alpha,1}(\R^3)$.
\end{definition}

We speak of $\cE^{\beta}_{\alpha,L}(\R^3)$ as consisting of \emph{cartoon-like 3D images} having
$C^\beta$-smo\-oth\-ness apart from a piecewise $C^\alpha$ discontinuity surface. We stress that
$\cE_{\alpha,L}^\beta(\R^3)$ is not a linear space of functions and that
$\cE^\beta_{\alpha,L}(\R^3)$ depends on the constants $\nu$ and $\mu$ even though we suppress this
in the notation.
Finally, we let $\cE_{\scp,L}^{\text{bin}}(\R^3)$ denote binary cartoon-like images, that is,
functions $f=f_0 + f_1 \chi_B \in \cE_{\alpha,L}^{\beta}(\R^3)$, where $f_0=0$ and $f_1=1$.

\section{Optimality bound for sparse approximations} 
\label{sec:optimal-sparsity}
After having clarified the model situation $\cE^{\beta}_{\alpha,L}(\R^3)$, 
we will now discuss which measure for the accuracy of approximation by
representation systems we choose, and what optimality means in this
case.
We will later in Section~\ref{sec:optimal-sparsity-3d} restrict the parameter range in our model
class $\cE^{\beta}_{\alpha,L}(\R^3)$ to $1<\alpha\le\beta \le2$. In this section, however, we will
find the theoretical optimal approximation error rate within $\cE^{\beta}_{\alpha,L}(\R^3)$ for the
full range $1<\alpha\le 2$ and $\beta \ge 0$. Before we state and prove the main optimal sparsity
result of this section, Theorem~\ref{thm:copy-lp-in-E}, we discuss the notions of $N$-term
approximations and frames.

\subsection{$N$-term approximations}
\label{sec:n-term-appr-1}

Let $\Phi=\set{\phi_i}_{i \in I}$ be a dictionary with the index set $I$ not necessarily being
countable. We seek to approximate each single element of $\cE^{\beta}_{\alpha,L}(\R^3)$ with
elements from $\Phi$ by $N$ terms of this system. For this, let $f \in \cE^{\beta}_{\alpha,L}(\R^3)$ be arbitrarily chosen.
Letting now $N \in \N$, we consider $N$-term approximations of $f$, \ie 
\[
\sum_{i \in {I}_N} {c}_i \phi_i \quad \text{with } {I}_N \subset I, \, \card{{I}_N} = N.
\]
The \emph{best $N$-term approximation} to $f$ is an $N$-term approximation
\[
f_N = \sum_{i \in {I}_N} {c}_i \phi_i,
\]
which satisfies that, for all $I_N \subset I$, $\card{I_N} = N$, and
for all scalars $(c_i)_{i \in I}$, 
\[
\norm[L^2]{f-f_N} \le \normBig[L^2]{f - \sum_{i \in I_N} c_i \phi_i}.
\]

\subsection{Frames}
\label{sec:frames}

A \emph{frame} for a separable Hilbert space $\mathcal{H}$ is a countable collection of vectors
$\{f_j\}_{j \in \mathbb{J}}$ for which there are constants $0 < A \leq B < \infty$ such that
\[ 
A \norm{f}^2 \leq \sum_{j \in \mathbb{J}} \abs{\innerprod{f}{f_j}}^2 \leq
B \norm{f}^2 \qquad\text{for all }f\in \mathcal{H}.
\]
If the upper bound in this inequality holds, then $\{f_j\}_{j \in \mathbb{J}}$ is said to be a
\emph{Bessel sequence} with Bessel constant $B$. For a Bessel sequence $\{f_j\}_{j \in \mathbb{J}}$,
we define the frame operator of $\{f_j\}_{j \in \mathbb{J}}$ by
\[
\Frame\colon \mathcal{H} \to \mathcal{H}, \qquad \Frame f = \sum_{j \in
  \mathbb{J}} \innerprod{f}{f_j} f_j. 
\] 
If $\{f_j\}_{j \in \mathbb{J}}$ is a frame, this operator is bounded, invertible, and positive. A
frame $\{f_j\}_{j \in \mathbb{J}}$ is said to be \emph{tight} if we can choose $A = B$. If
furthermore $A= B=1$, the sequence $\{f_j\}_{j \in \mathbb{J}}$ is said to be a \emph{Parseval
  frame}. Two Bessel sequences $\{f_j\}_{j \in \mathbb{J}}$ and $\{g_j\}_{j \in \mathbb{J}}$ are
said to be \emph{dual frames} if
\[ f = \sum_{j \in \mathbb{J}} \innerprod{f}{g_j}f_j \qquad \text{for all } f
\in \mathcal{H}. 
\] 
It can be shown that, in this case, both Bessel sequences are even frames, and we shall say that
the frame $\{g_j\}_{j \in \mathbb{J}}$ is \emph{dual} to $\{f_j\}_{j \in \mathbb{J}}$, and vice
versa. At least one dual always exists; it is given by $\{\inv\Frame f_j \}_{j \in \mathbb{J}}$ and
called the \emph{canonical dual}.

Now, suppose the dictionary $\Phi$ forms a frame for $L^2(\R^3)$ with frame bounds $A$ and $B$, and
let $\setsmall{\tilde\phi_i}_{i \in I}$ denote the canonical dual frame. We then consider the expansion
of $f$ in terms of this dual frame, \ie
\begin{equation*} %\label{eq:first} 
f = \sum_{i \in I} \ip{f}{\phi_i}
  \tilde{\phi}_i.
\end{equation*} 
For any $f \in L^2(\R^2)$ we have $(\innerprod{f}{\phi_i})_{i \in I} \in \ell^2(I)$ by definition.
Since we only consider expansions of functions $f$ belonging to a subset
$\cE^{\beta}_{\alpha,L}(\R^3)$ of $L^2(\R^3)$, this can, at least, potentially improve the decay
rate of the coefficients so that they belong to $\ell^p(I)$ for some $p<2$. This is exactly what is
understood by {\em sparse approximation} (also called {\em compressible approximations}). We hence
aim to analyze shearlets with respect to this behavior, \ie the decay rate of shearlet coefficients.

For frames, tight and non-tight, it is not possible to derive a usable, explicit form for the
best $N$-term approximation. We therefore crudely approximate the best $N$-term approximation
by choosing the $N$-term approximation provided by the indices ${I}_N$ associated with the $N$
largest coefficients $\ip{f}{\phi_i}$ in magnitude with these coefficients, \ie
\[
f_N = \sum_{i \in {I}_N} \ip{f}{\phi_i} \tilde \phi_i.
\]
However, even with this rather crude greedy selection procedure, we obtain very strong
results for the approximation rate of shearlets as we will see in
Section~\ref{sec:optimal-sparsity-3d}.

The following well-known result shows how the $N$-term approximation error can be bounded by the
tail of the square of the coefficients $c_i=\innerprod{f}{\phi_i}$. We refer to \cite{KLL11} for a
proof.
\begin{lemma}\label{lemma:n-term-frame-approx}
  Let $\setsmall{\phi_i}_{i \in I}$ be a frame for $H$ with frame bounds $A$ and $B$, and let
  $\setsmall{\tilde\phi_i}_{i \in I}$ be the canonical dual frame. Let $I_N \subset I$ with $\card{I_N} = N$,
  and let $f_N$ be the $N$-term approximation $f_N = \sum_{i \in I_N} \innerprod{f}{\phi_i} \tilde
  \phi_i$. Then
\begin{equation*}
\norm{f-f_N}^2 \le \frac1A \sum_{i \notin I_N} \abs{\innerprod{f}{\phi_i}}^2 %\label{eq:n-term-frame-approx-bound}
\end{equation*}
for any $f \in L^2(\R^3)$.
\end{lemma}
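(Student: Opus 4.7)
The plan is to exploit two standard facts about canonical dual frames: the reconstruction formula and the upper frame bound of the dual. Since $\{\phi_i\}_{i \in I}$ is a frame with bounds $A, B$, the canonical dual $\{\tilde\phi_i\}_{i \in I}$ is itself a frame, with bounds $B^{-1}$ and $A^{-1}$; moreover every $f \in H$ admits the expansion
\[
f = \sum_{i \in I} \innerprod{f}{\phi_i} \tilde\phi_i,
\]
with unconditional convergence in $H$.

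First I would subtract $f_N$ from this expansion. Since $f_N = \sum_{i \in I_N} \innerprod{f}{\phi_i} \tilde\phi_i$, the expansion yields
\[
f - f_N \;=\; \sum_{i \in I \setminus I_N} \innerprod{f}{\phi_i}\, \tilde\phi_i.
\]
Thus the error is a synthesis, via $\{\tilde\phi_i\}$, of the coefficient sequence $(c_i)_{i \in I}$ given by $c_i = \innerprod{f}{\phi_i}$ for $i \notin I_N$ and $c_i = 0$ otherwise. Note that $(c_i) \in \ell^2(I)$ by the Bessel bound for $\{\phi_i\}$, so the sum converges.

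Next I would invoke the Bessel property for the synthesis operator associated with the dual frame. Because $\{\tilde\phi_i\}$ has upper frame bound $A^{-1}$, the synthesis operator $(d_i) \mapsto \sum_i d_i \tilde\phi_i$ is bounded from $\ell^2(I)$ into $H$ with operator norm at most $A^{-1/2}$. Applying this to the coefficient sequence $(c_i)$ above gives
\[
\norm{f - f_N}^2 \;\le\; \frac{1}{A} \sum_{i \in I \setminus I_N} \abs{\innerprod{f}{\phi_i}}^2,
\]
which is the desired estimate. The only non-trivial ingredient is the bound on the dual synthesis operator, which is a standard consequence of the identity $\tilde\phi_i = S^{-1}\phi_i$ and the fact that $S^{-1}$ is bounded by $A^{-1}$ on $H$; this is the step I would verify most carefully, although it is entirely routine.
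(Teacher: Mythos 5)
Your proof is correct and is exactly the standard argument the paper defers to (it cites \cite{KLL11} rather than proving the lemma itself): expand $f-f_N$ via the reconstruction formula over the complement of $I_N$, then bound the synthesis operator of the canonical dual frame using its upper frame bound $A^{-1}$. Nothing further is needed.
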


Let $c^\ast$ denote the non-increasing (in modulus) rearrangement of
$c=(c_i)_{i \in I}=(\innerprod{f}{\phi_i})_{i \in I}$, \eg
$c^\ast_{\,n}$ denotes the $n$th largest coefficient of $c$ in
modulus. This rearrangement corresponds to a bijection $\pi:\N \to I$
that satisfies 
\begin{equation*}
\pi:\N \to I, \quad c_{\pi(n)}=c^\ast_{\,n} \text{ for all $n \in
  \N$}. %\label{eq:rearrangement-bijection}
 \end{equation*}
Since $c \in \ell^2(I)$, also $c^\ast \in \ell^2(\N)$. Let $f$ be a cartoon-like image, and suppose 
that $\abs{c^\ast_{n}}$, in this case, even decays as
\begin{equation}
\abs{c^\ast_{n}} \lesssim n^{-(\alpha+2)/4} \qquad \text{for} \quad n \to \infty\label{eq:sought-weak-decay}
\end{equation}
for some $\alpha>0$, where the notation $h(n)\lesssim g(n)$ means that
there exists a $C>0$ such that $h(n) \le C g(n)$, \ie $h(n)= O(g(n))$.
Clearly, we then have $c^\ast \in \ell^p(\N)$ for $p\ge \tfrac{4}{\alpha+2}$.
By Lemma~\ref{lemma:n-term-frame-approx}, the $N$-term
approximation error will therefore decay as
\begin{equation}
\norm{f-f_N}^2 \le \frac1A \sum_{n>N} \abs{c^\ast_{n}}^2 \lesssim \sum_{n>N} n^{-\alpha/2+1} \asymp N^{-\alpha/2},
\label{eq:from-coeff-decay-to-error-decay}
\end{equation}
where $f_N$ is the $N$-term approximation of $f$ by keeping the $N$ largest coefficients, that is,
\begin{equation}
f_N = \sum_{n=1}^N c^*_{\,n} \, \tilde{\phi}_{\pi(n)}.
\label{eq:frame-n-term-largest}
\end{equation}
The notation $h(n)\asymp g(n)$, sometimes also written as $h(n)=\Theta(g(n))$, used above means that $h$ is
bounded both above and below by $g$ asymptotically as $n\to \infty$, that is, $h(n)=O(g(n))$
\emph{and} $g(n)=O(h(n))$. The approximation error rate $N^{-\alpha/2}$ obtained
in~(\ref{eq:from-coeff-decay-to-error-decay}) is exactly the sought optimal rate mentioned in the
introduction. This illustrates that the fraction $\frac{\alpha+2}{4}$ introduced in the decay of the
sequence $c^*$ will play a major role in the following. In particular, we are searching for a
representation system $\Phi$ which forms a frame and delivers decay of
$c=(\innerprod{f}{\phi_i})_{i\in I}$ as in~\eqref{eq:sought-weak-decay} for any cartoon-like image.

\subsection{Optimal sparsity}
\label{sec:sparsity}

In this subsection we will state and prove the main result of this section,
Theorem~\ref{thm:copy-lp-in-E}, but let us first discuss some of its implications for sparse
approximations of cartoon-like images. 

From the $\Phi=\setsmall{\phi_i}_{i \in I}$ dictionary with the index set $I$ not necessarily being
countable, we consider  expansions of the form
\begin{equation}
f = \sum_{i \in I_f } c_i \, \phi_i,
\label{eq:dict-expansion}
\end{equation}
where $I_f \subset I$ is a countable selection from $I$ that may depend on $f$. Moreover, we can assume that $\phi_i$
are normalized by $\norm[L^2]{\phi_i} =1$. The selection of the $i$th term is obtained according to a selection rule
$\sigma(i,f)$ which may \emph{adaptively} depend on $f$.  Actually, the $i$th element may also be modified adaptively
and depend on the first $(i-1)$th chosen elements \cite{Don01}. We assume that how deep or how far down in the indexed
dictionary $\Phi$ we are allowed to search for the next element $\phi_i$ in the approximation is limited by a polynomial
$\pi$.  Without such a depth search limit, one could choose $\Phi$ to be a countable, dense subset of $L^2(\R^3)$ which
would yield arbitrarily good sparse approximations, but also infeasible approximations in practise. We shall denote any
sequence of coefficients $c_i$ chosen according to these restrictions by $c(f)=(c(f)_i)_i$.

We are now ready to state the main result of this section. Following Donoho~\cite{Don01} we say that
a function class $\mathcal{F}$ contains an embedded orthogonal hypercube of dimension $m$ and side
$\delta$ if there exists $f_0 \in \mathcal{F}$, and orthogonal functions $\psi_{i,m,\delta}$, $i
=1,\dots,m$, with $\norm[L^2]{\psi_{i,m,\delta}}=\delta$, such that the collection of hypercube
vertices
\begin{equation*}
  %\label{eq:hypercube}
  \mathcal{H}(m;f_0,\{\psi_i\}):= \setprop{f_0+\sum_{i=1}^m\xi_i
    \psi_{i,m,\delta}}{\xi_i \in \{0,1\}}
\end{equation*}
is contained in $\mathcal{F}$. The sought bound on the optimal sparsity within the set of cartoon-like images will be obtained by showing that the cartoon-like image class contains sufficiently high-dimensional hypercubes with sufficiently large sidelength; intuitively, we will see that a certain high complexity of the set of cartoon-like images limits the possible sparsity level. The meaning of ``sufficiently'' is made precise by the following definition. 
 We say that a function class $\mathcal{F}$ contains a
copy of $\ell^p_0$ if $\mathcal{F}$ contains embedded orthogonal hypercubes of dimension $m(\delta)$
and side $\delta$, and if, for some sequence $\delta_k \to 0$, and some constant $C>0$:
\begin{equation}
 m(\delta_k) \ge C \, \delta_k^{-p}, \qquad k=k_0,k_0+1,\dots\label{eq:dimension-growth}
\end{equation}
The first part of the following result is an extension from the 2D to the 3D setting of \cite[Thm.
3]{Don01}. 

\begin{theorem}\label{thm:copy-lp-in-E}
  \begin{romannum}
  \item The class of binary cartoon-like images $\cE_\alpha^{\text{bin}}(\R^3)$
    contains a copy of $\ell^p_0$ for $p = 4/(\alpha + 2)$.
  \item The space of H\"older functions $C^\beta(\R^3)$ with compact
    support in $\itvcc{0}{1}^3$ contains a copy of $\ell^p_0$ for $p =
    6/(2\beta + 3)$.
  \end{romannum}
\end{theorem}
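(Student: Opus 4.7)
The plan is to exhibit, for each sufficiently small scale $h>0$, a family of mutually orthogonal perturbations so that every $\pm$-combination remains in the relevant class, and then to count. I treat (i) and (ii) in parallel since the computation has the same flavour.

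For (ii), the construction is direct. Fix a single non-negative $C^\infty$ bump $\phi:\R^3\to\R$ with $\supp\phi\subset [0,1]^3$ and $\norm[L^2]{\phi}=1$, and let $\phi_{h,k}(x)= h^{\beta}\,\phi(x/h - k)$ for $k\in\Z^3$. Partition $\itvcc{0}{1}^3$ into $\sim h^{-3}$ disjoint sub-cubes of side $h$ and use the translates indexed by the $m(h)\asymp h^{-3}$ centers. Each $\phi_{h,k}$ has $\norm[C^\beta]{\phi_{h,k}}$ bounded by a constant (independent of $h$) times $\norm[C^\beta]{\phi}$; since the supports are disjoint, arbitrary $\set{0,1}$-combinations $\sum_k \xi_k \phi_{h,k}$ also have $C^\beta$-norm bounded by this same constant. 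By a suitable rescaling of the reference bump, this constant can be made $\le \mu$. The $L^2$-norm is $\norm[L^2]{\phi_{h,k}}=\delta$ with $\delta\asymp h^{\beta+3/2}$, so
\[
m(\delta)\asymp h^{-3}\asymp \delta^{-6/(2\beta+3)},
\]
verifying \eqref{eq:dimension-growth} with $p=6/(2\beta+3)$ and $f_0=0$.

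For (i), the perturbations must be supported near the discontinuity surface, and the relevant Hölder constraint is on the radius function $\rho$ rather than on the characteristic function. Take $B_0=\setpropsmall{x\in\R^3}{\norm[2]{x}\le \rho_0}$ with $\rho_0<1$, so $\partial B_0$ is a round sphere and $\rho\equiv\rho_0$ is trivially $C^\alpha$. Choose a non-negative bump $\phi_0 \in C^\infty_c(\R^2)$ with $\supp\phi_0\subset\itvcos{0}{1}^2$, and for a grid of $\sim h^{-2}$ well-separated centers $\theta_k = (\theta_{1,k},\theta_{2,k}) \in \itvcos{0}{2\pi}\times\itvoos{0}{\pi}$ inside a spherical cap that avoids the coordinate poles, define
\[
b_{h,k}(\theta)= c\,h^{\alpha}\,\phi_0\!\left(\frac{\theta-\theta_k}{h}\right),
\]
with $c>0$ chosen below. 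For $\xi\in\set{0,1}^m$ set $\rho_\xi = \rho_0 + \sum_k \xi_k b_{h,k}$ and let $B_\xi$ be the corresponding star-shaped set; the associated perturbation is $\psi_{k,m,\delta}=\chi_{B_{e_k}}-\chi_{B_0}$. The key verification is that $\rho_\xi$ satisfies the Hölder bound \eqref{eq:curvebound}: since $\lfloor\alpha\rfloor=1$ and $\set{\alpha}=\alpha-1$, one tests first derivatives, and the disjointness (with separation $\gtrsim h$) of the supports of the $b_{h,k}$ reduces this to a two-case check. Within a single patch the bound is inherited from $\norm[\dot C^\alpha]{b_{h,k}}\lesssim c$; across two distinct patches one uses $\abs{\partial b_{h,k}(\theta)}\lesssim c\,h^{\alpha-1}$ together with the separation $\norm[2]{\theta-\theta'}\gtrsim h$ to conclude
\[
\frac{\abs{\partial b_{h,k}(\theta)-\partial b_{h,k'}(\theta')}}{\norm[2]{\theta-\theta'}^{\alpha-1}}\lesssim c,
\]
uniformly in $\xi$; choosing $c$ small enough makes the total bound $\le \nu$. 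Finally, the $L^2$-norm of $\psi_{k,m,\delta}=\pm\chi_{B_{e_k}\triangle B_0}$ is controlled by the volume of a cap of angular radius $h$ and radial thickness $c\,h^\alpha$, giving $\norm[L^2]{\psi_{k,m,\delta}}^2\asymp h^{\alpha+2}$, i.e.\ $\delta\asymp h^{(\alpha+2)/2}$. Orthogonality is immediate from the disjoint supports. Counting yields $m(\delta)\asymp h^{-2}\asymp \delta^{-4/(\alpha+2)}$, which is \eqref{eq:dimension-growth} with $p=4/(\alpha+2)$.

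The main obstacle is the one I singled out: ensuring that the Hölder seminorm bound on $\rho_\xi$ survives uniformly over all choices $\xi\in\set{0,1}^m$. The disjoint-support layout reduces this to the cross-patch estimate above, where the balance $h^{\alpha-1}/h^{\alpha-1}$ between the pointwise gradient size $h^{\alpha-1}$ and the separation $h^{\alpha-1}$ is exactly what makes the construction tight; any attempt to use larger bumps (heights $\gg h^\alpha$) would violate $\norm[\dot C^\alpha]{\rho_\xi}\le\nu$, and is precisely what forces the optimal exponent $p=4/(\alpha+2)$ in the 3D setting, mirroring Donoho's 2D argument but with the surface area $\asymp h^{-2}$ replacing arclength $\asymp h^{-1}$.
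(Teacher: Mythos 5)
Your proposal is correct and follows essentially the same route as the paper: disjointly supported bumps of height $h^\beta$ on an $h^{-3}$ grid for the H\"older class, and radius-function perturbations of height $\asymp h^\alpha$ on an $h^{-2}$ angular grid over a reference sphere for the binary cartoon class, with the same volume and counting computations (the paper's parameter $m$ is your $1/h$). The only difference is cosmetic: you spell out the cross-patch case of the $\dot C^\alpha$ bound on $\rho_\xi$, which the paper asserts directly via $\norm[\dot{C}^\alpha]{\rho_\gamma}\le\norm[\dot{C}^\alpha]{\varphi_{i,m}}$.
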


Before providing a proof of the theorem, let us discuss some of its implications for sparse
approximations of cartoon-like images. Theorem~\ref{thm:copy-lp-in-E}(i) implies, by
\cite[Theorem~2]{Don01}, that for every $p<4/(\alpha+2)$ and every method of atomic decomposition
based on polynomial $\pi$ depth search from any countable dictionary $\Phi$, we have for $f \in
\cE_\alpha^{\text{bin}}(\R^3)$:
\begin{equation}
\min_{\sigma(n,f)\le \pi(n)} \max_{f\in \cE_{\alpha,L}^{\beta}(\R^3)}
\norm[w\ell^p]{c(f)} = +\infty, \label{eq:sparsity-of-coefficients}
\end{equation}
where the weak-$\ell_p$ ``norm''\footnote{Note that neither
  $\norm[w\ell_p]{\cdot}$ nor $\norm[\ell_p]{\cdot}$ (for $p<1$) is a norm since
  they do not satisfy the triangle inequality. Note also that the
  weak-$\ell_p$ norm is a special case of the Lorentz quasinorm.}
is defined as $\norm[w\ell^p]{c(f)} =
\sup_{n>0}n^{1/p}\abs{c^*_{\,n}}$. \emph{Sparse} approximations are
approximations of the form $\sum_i c(f)_i \, \phi_i$ with coefficients
$c(f)_n^\ast$ decaying at certain, hopefully high, rate.
Equation~(\ref{eq:sparsity-of-coefficients}) is a precise statement of
the optimal achievable sparsity level. No representation system (up to
the restrictions described above) can deliver
expansions~(\ref{eq:dict-expansion}) for
$\cE^{\text{bin}}_{\alpha}(\R^3)$ with coefficients satisfying $c(f)
\in w\ell_p$ for $p < 4/(\alpha + 2)$.
As we will see in Theorems~\ref{thm:3d-opt-sparse} and~\ref{thm:3d-opt-sparse-piecewise},
pyramid-adapted shearlet frames deliver $(\innerprod{f}{\psi_{\lambda}})_{\lambda} \in w\ell^p$ for
$p = 4/(\alpha+2-2\tau)$, where $0\le \tau<0.04$.

Assume for a moment that we have an ``optimal'' dictionary $\Phi$ at hand that delivers $c(f) \in
w\ell^{4/(\alpha+2)}$, and assume further that it is also a frame. As we saw in the
Section~\ref{sec:frames}, this implies that
\begin{equation*}
\norm[L^2]{f-f_N}^2 \lesssim  N^{-\alpha/2} \qquad \text{as } N \to \infty,
\end{equation*}
where $f_N$ is the $N$-term approximation of $f$ by keeping the $N$
largest coefficients. % On the
Therefore, no frame representation system can deliver at better approximation error rate than
$O(N^{-\alpha/2})$ under the chosen approximation procedure within the image model class
$\cE_\alpha^{\text{bin}}(\R^3)$. If $\Phi$ is actually an orthonormal basis, then this is truly the
optimal rate since \emph{best} $N$-term approximations, in this case, are obtained by keeping the
$N$ largest coefficients.

Similarly, Theorem~\ref{thm:copy-lp-in-E}(ii) tells us that the optimal approximation error rate
within the H\"older function class is $O(N^{-2\beta/3})$. Combining the two estimates we see that the
optimal approximation error rate within the full cartoon-like image class $\cE^\beta_\alpha(\R^3)$
cannot exceed $O(N^{-\min\{\alpha/2,2\beta/3\}})$ convergence. For the parameter range $1<\alpha \le
\beta \le 2$, this rate reduces to $O(N^{-\alpha/2})$. For $\alpha= \beta = 2$, as will show in
Section~\ref{sec:optimal-sparsity-3d}, shearlet systems actually deliver this rate except from an
additional polylog factor, namely $O(N^{-\alpha/2}(\log N)^2)=O(N^{-1}(\log N)^2)$. For $1<\alpha
\le \beta \le 2$ and $\alpha<2$, the $\log$-factor is replaced by a small polynomial factor
$N^{\tau(\alpha)}$, where $\tau(\alpha)<0.04$ and $\tau(\alpha)\to 0$ for $\alpha \to 1^+$ or
$\alpha \to 2^-$.

It is striking that one is able to obtain such a near optimal approximation error rate
since the shearlet system as well as the approximation procedure will be non-adaptive; in
particular, since traditional, non-adaptive representation systems such as Fourier series and wavelet
systems are far from providing an almost optimal approximation rate. This is illustrated in the
following example.

\begin{example}
\label{example:fourier-wavelets}
  Let $B=B(x,\rho)$ be the ball in $\itvcc{0}{1}^3$ with center $x$ and
  radius $r$. Define $f = \chi_B$. Clearly, $f \in \cE^2_2(\R^3)$ if $B \subset \itvcc{0}{1}^3$. 
Suppose $\Phi=\setsmall{\mathrm{e}^{2\pi ikx}}_{k \in \Z^d}$. The best $N$-term Fourier sum $f_N$ yields
\[ \norm[L^2]{f-f_N}^2 \asymp N^{-1/3} \qquad \text{for } N\to \infty,\] which is far from the
optimal rate $N^{-1}$. For the wavelet case the situation is only slightly better. Suppose 
$\Phi$ is any compactly supported wavelet basis. Then
 \[ \norm[L^2]{f-f_N}^2 \asymp N^{-1/2} \qquad \text{for } N\to
  \infty,\]
where $f_N$ is the best $N$-term approximation from $\Phi$. The calculations leading to these
estimates are not difficult, and we refer to \cite{KLL11} for the details. We will later see that
shearlet frames yield $\norm[L^2]{f-f_N}^2 \lesssim N^{-1}(\log N)^2$, where  $f_N$ is the best
$N$-term approximation. 
\end{example}

We mention that the rates obtained in Example~\ref{example:fourier-wavelets} are \emph{typical} in
the sense that most cartoon-like images will yield the exact same (and far from optimal) rates.

Finally, we end the subsection with a proof of Theorem~\ref{thm:copy-lp-in-E}.
%\bigskip
\begin{proof}[Proof of Theorem~\ref{thm:copy-lp-in-E}]
  The idea behind the proofs is to construct a collection of functions in
  $\cE_\alpha^{\text{bin}}(\R^3)$ and $C^\beta(\R^3)$, respectively, such that the collection of
  functions will be vertices of a hypercube with dimension satisfying (\ref{eq:dimension-growth}).

  (i): Let $\varphi_1$ and $\varphi_2$ be smooth $C^\infty$ functions with compact support $\supp
  \varphi_1 \subset \itvcc{0}{2\pi}$ and $\supp \varphi_2 \subset \itvcc{0}{\pi}$. For $A>0$ and $m
  \in \N$ we define:
  \[\varphi_{i,m}(t) =\varphi_{i_1,i_2,m}(t) = A m^{-\alpha}
  \varphi_1(mt_1-2\pi i_1)\varphi_2(mt_2-\pi i_2),
 \] for $i_1,i_2 \in \{0, \dots , m -
    1\}$, where $i=(i_1,i_2)$ and $t=(t_1,t_2)$. We let further
  $\varphi(t):=\varphi_1(t_1)\varphi_2(t_2)$.
  It is easy to see that $\norm[L^1]{\varphi_{i,m}} = m^{-\alpha +2} A \norm[L^1]{\varphi}$.
  Moreover, it can also be shown that $\norm[\dot{C}^\alpha]{\varphi_{i,m}} = A
  \norm[\dot{C}^\alpha]{\varphi}$, where $\norm[\dot{C}^\alpha]{\cdot}$ denotes the homogeneous
  H\"older norm introduced in (\ref{eq:curvebound}).

  Without loss of generality, we can consider the cartoon-like images
  $\cE_\alpha^{\text{bin}}(\R^3)$ translated by $-(\frac12,\frac12,\frac12)$ so that their support lies in
  $\itvcc{-1/2}{1/2}^3$. Alternatively, we can fix an origin at $(1/2,1/2,1/2)$, and use spherical
  coordinates $(\rho,\theta_1,\theta_2)$ relative to this choice of origin. We set $\rho_0=1/4$ and define
\[ \psi_{i,m}= \chi_{\{\rho_0 < \rho \le \rho_0 + \varphi_{i,m}\}} \quad \text{for
    $i_1,i_2 \in \{0, \dots , m - 1\}$}. \]
  The radius functions $\rho_\gamma$ for $\gamma=(\gamma_{i_1,i_2})_{i_1,i_2\in \setsmall{0,\dots, m-1}}$ with
  $\gamma_{i_1,i_2} \in \{0, 1\}$ defined by
  \begin{equation}
    \rho_\gamma(\theta_1,\theta_2) = \rho_0 + \sum^m_{i_1= 1} \sum^m_{i_2= 1} 
    \gamma_{i_1,i_2}  \,\varphi_{i,m}(\theta_1,\theta_2)  %\quad \text{where 
 ,\label{eq:radius-functions}
  \end{equation}
 determines the discontinuity surfaces of the functions of the form: 
  \begin{equation*}
    f_\gamma = \chi_{\{\rho \le \rho_0\}} + \sum^m_{i_1= 1} \sum^m_{i_2= 1} 
    \gamma_{i_1,i_2}  \psi_{i,m}  \quad \text{for $\gamma_{i_1,i_2} \in 
      \{0, 1\}$}. %\label{eq:corres-images}
  \end{equation*}
  For a fixed $m$ the functions $\psi_{i ,m}$ are disjointly supported and therefore mutually
  orthogonal. Hence, $\cH(m^2,\chi_{\{\rho \le \rho_0\}},\{\psi_{i,m}\})$ is a collection of hypercube
  vertices.  Moreover,
  \begin{align*}
    \norm[L^2]{\psi_{i,m}}^2 &=
    \lambda(\setprop{(\rho,\theta_1,\theta_2)}{\rho_0 \le \rho \le
      \rho_0+\varphi_{i,m}(\theta_1,\theta_2)})\\ &\le \int_0^{2\pi}
    \int_0^\pi \int_{\rho_0}^{\rho_0+\varphi_{i,m}(\theta_1,\theta_2)} \rho^2
    \sin \theta_2 \; d\rho \, d\theta_2 \, d\theta_1 \\ %&= ... \\
    & \le C_0\, m^{-\alpha-2} \norm[L^1]{\varphi},
  \end{align*}
  where the constant $C_0$ only depends on $A$. % [WRITE EXACT EXP?].
  Any radius function $\rho=\rho(\theta_1,\theta_2)$ of the form~(\ref{eq:radius-functions}) satisfies
  \[ 
  \norm[\dot{C}^\alpha]{\rho_\gamma} \le \norm[\dot{C}^\alpha]{\varphi_{i,m}} =A
  \norm[\dot{C}^\alpha]{\varphi}. 
  \] 
  Therefore, $\norm[\dot{C}^\alpha]{\rho} \le \nu$ whenever $A \le \nu/\norm[\dot{C}^\alpha]{\varphi}$. This shows that we
  have the hypercube embedding
  \[
   \cH(m^2,\chi_{\{\rho \le \rho_0\}},\{\psi_{i,m}\}) \subset
  \cE_\alpha^{\text{bin}}(\R^3). 
  \]
  The side length $\delta = \norm[L^2]{\psi_{i,m}}$ of the hypercube satisfies
  \[
  \delta^2 \le C_0\, m^{-\alpha-2} \norm[L^1]{\varphi} \le \nu \,
  \frac{\norm[L^1]{\varphi}}{\norm[\dot{C}^\alpha]{\varphi}}\,
  m^{-\alpha-2},
  \]
  whenever $C_0 \le \nu/\norm[\dot{C}^\alpha]{\varphi}$. Now, we finally choose $m$ and $A$ as
  \[
  m(\delta) = \floor{\left(\frac{\delta^2}{\nu}
      \frac{\norm[\dot{C}^\alpha]{\varphi}}{\norm[L^1]{\varphi}}
    \right)^{-1/(\alpha+2)}} \quad \text{and} \quad A(\delta,\nu)=
  \delta^2 m^{\alpha+2}/ \norm[L^1]{\varphi}.
  \]
  By this choice, we have $C_0 \le \nu/\norm[\dot{C}^\alpha]{\varphi} $ for sufficiently small $\delta$.
  Hence, $\cH$ is a hypercube of side length $\delta$ and dimension $d=m(\delta)^2$ embedded in
  $\cE_\alpha^{\text{bin}}(\R^3)$. We obviously have $m(\delta) \ge C_1 \nu^{\frac{1}{\alpha+2}}
  \delta^{-\frac{2}{\alpha+2}}$, thus the dimension $d$ of the hypercube obeys
  \[ d \ge C_2 \, \delta^{-\frac{4}{\alpha+2}}\] for all sufficiently small $\delta>0$.

  \smallskip

  \noindent (ii): Let $\varphi \in C_0^\infty(\R)$ with compact support $\supp \varphi \subset
  \itvcc{0}{1}$. For $m \in \N$ to be determined, we define for $i_1,i_2,i_3
    \in \{0, \dots , m - 1\}$:
  \[
  \psi_{i,m}(t) =\psi_{i_1,i_2,i_3,m}(t) = m^{-\beta} \varphi(mt_1-
  i_1)\varphi(mt_2- i_2)\varphi(mt_3-i_3) ,
  \]
  where $i=(i_1,i_2,i_3)$ and $t=(t_1,t_2,t_3)$. We let $\psi(t):=\varphi(t_1)\varphi(t_2)
  \varphi(t_3)$. It is easy to see that $\norm[L^2]{\psi_{i,m}}^2 = m^{-2\beta-3}
  \norm[L^2]{\psi}^2$. We note that the functions $\psi_{i ,m}$ are disjointly supported (for a
  fixed $m$) and therefore mutually orthogonal. Thus we have the hypercube embedding
  \[
  \cH(m^3,0,\{\psi_{i,m}\}) \subset C^\beta(\R^3),
  \]
  where the side length of the hypercube is $\delta = \norm[L^2]{\psi_{i,m}} = m^{-\beta-3/2}
  \norm[L^2]{\psi}$. Now, chose $m$ as
  \[
  m(\delta) = \floor{\left(\frac{\delta}{ \norm[L^2]{\psi}}
    \right)^{-1/(\beta+3/2)}}.
  \]
  Hence, $\cH$ is a hypercube of side length $\delta$ and dimension $d=m(\delta)^3$ embedded in
  $C^\beta(\R^3)$. The dimension $d$ of the hypercube obeys
  \[ d \ge C \, \delta^{-3 \frac{1}{\beta+3/2}} = C \, \delta^{-
    \frac{6}{2\beta+3}}, \] for all sufficiently small $\delta>0$.
\end{proof}

\subsection{Higher dimensions}
\label{sec:higher-dimensions}

Our main focus is, as mentioned above, the three-dimensional setting, but let us briefly sketch how
the optimal sparsity result extends to higher dimensions. The $d$-dimensional cartoon-like image
class $\cE^\beta_\alpha(\R^d)$ consists of functions having $C^\beta$-smoothness apart from a
$(d-1)$-dimensional $C^\alpha$-smooth discontinuity surface. The $d$-dimensional analogue of
Theorem~\ref{thm:copy-lp-in-E} is then straightforward to prove.
\begin{theorem}\label{thm:copy-lp-in-E-higher-dim}
  \begin{romannum}
  \item The class of $d$-dimensional binary cartoon-like images $\cE_\alpha^{\text{bin}}(\R^d)$ contains a copy of $\ell^p_0$
    for $p = 2(d-1)/(\alpha + d -1)$.
  \item The space of H\"older functions $C^\beta(\R^d)$ contains a copy of $\ell^p_0$ for $p = \frac{2d}{2\beta + d}$.
  \end{romannum}
\end{theorem}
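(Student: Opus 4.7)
The plan is to mirror the two parts of the proof of Theorem~\ref{thm:copy-lp-in-E}, replacing the $2$-sphere by the $(d-1)$-sphere in part (i) and the $3$-cube by the $d$-cube in part (ii), and then carefully track how the new exponents emerge from volume factors of dimension $d-1$ or $d$.

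For part (i), I would fix an origin interior to $\itvcc{0}{1}^d$ and work in $d$-dimensional spherical coordinates $(\rho,\theta_1,\dots,\theta_{d-1})$. I would choose a tensor product of smooth $C^\infty$ bumps
\[
\varphi_{i,m}(\theta) = A\,m^{-\alpha} \prod_{k=1}^{d-1} \varphi_k\!\left(m\theta_k - c_k\, i_k\right),
\quad i = (i_1,\dots,i_{d-1}) \in \{0,\dots,m-1\}^{d-1},
\]
with the constants $c_k$ chosen so that distinct bumps are disjointly supported on the $(d-1)$-sphere. Exactly as in the 3D argument, one checks that $\norm[\dot C^\alpha]{\varphi_{i,m}} = A\,\norm[\dot C^\alpha]{\varphi}$ is scale-invariant, while the $L^1$-norm shrinks by one factor of $m^{-1}$ for each of the $d-1$ angular variables. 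Defining the indicator shells $\psi_{i,m} = \chi_{\{\rho_0 < \rho \le \rho_0 + \varphi_{i,m}\}}$ and using that the spherical volume element in $\R^d$ is $\rho^{d-1}$ times an angular factor bounded on $\set{\rho \le 1}$, the $L^2$-norm satisfies
\[
\norm[L^2]{\psi_{i,m}}^2 \;\le\; C_0\,\rho_0^{d-1}\,\norm[L^1]{\varphi_{i,m}} \;\le\; C_0' A\, m^{-\alpha - (d-1)}.
\]
Summing over $\gamma \in \{0,1\}^{m^{d-1}}$ yields radius functions $\rho_\gamma$ with $\norm[\dot C^\alpha]{\rho_\gamma} \le \nu$ provided $A \le \nu / \norm[\dot C^\alpha]{\varphi}$, and therefore an embedded hypercube $\cH(m^{d-1},\chi_{\{\rho\le\rho_0\}},\{\psi_{i,m}\}) \subset \cE_\alpha^{\text{bin}}(\R^d)$ of side length $\delta \lesssim m^{-(\alpha+d-1)/2}$. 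Inverting gives $m(\delta) \gtrsim \delta^{-2/(\alpha+d-1)}$, so the dimension $m(\delta)^{d-1}$ of the hypercube obeys $d_{\text{hc}} \gtrsim \delta^{-2(d-1)/(\alpha+d-1)}$, which is precisely the exponent $p = 2(d-1)/(\alpha+d-1)$ claimed.

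For part (ii), I would take a tensor product bump
\[
\psi_{i,m}(t) = m^{-\beta} \prod_{k=1}^{d} \varphi(mt_k - i_k),\qquad i \in \{0,\dots,m-1\}^d,
\]
on $\itvcc{0}{1}^d$, for a fixed $\varphi \in C_0^\infty(\R)$ supported in $\itvcc{0}{1}$. Disjointness of supports across $i$ yields orthogonality, and a direct calculation gives $\norm[L^2]{\psi_{i,m}}^2 = m^{-2\beta-d}\,\norm[L^2]{\psi}^2$, while the $C^\beta$-norm is controlled uniformly in $m$ since each derivative of order $\floor{\beta}$ contributes a factor $m^{\floor{\beta}}$ and the H\"older seminorm contributes $m^{\{\beta\}}$, for a total of $m^\beta$, cancelling the prefactor. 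Hence $\cH(m^d,0,\{\psi_{i,m}\}) \subset C^\beta(\R^d)$ is embedded with side length $\delta = m^{-\beta - d/2}\,\norm[L^2]{\psi}$, so $m(\delta) \asymp \delta^{-1/(\beta+d/2)}$ and the dimension satisfies $d_{\text{hc}} = m(\delta)^d \gtrsim \delta^{-2d/(2\beta+d)}$, which is the stated exponent $p = 2d/(2\beta+d)$.

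The argument is essentially routine once the 3D proof is in hand; no step presents a genuine obstacle. The only place requiring a small amount of care is the bound on $\norm[L^2]{\psi_{i,m}}^2$ in part (i), where one must note that the Jacobian $\rho^{d-1}$ in $d$-dimensional spherical coordinates is bounded on the thin shell $\set{\rho_0 \le \rho \le \rho_0 + \varphi_{i,m}}$ because $\rho_0 < 1$ and $\varphi_{i,m}$ is uniformly small; this replaces the factor $\rho^2\sin\theta_2$ appearing in the 3D estimate and is the single place where the dimension $d$ enters the exponent in the form $d-1$ rather than $d$.
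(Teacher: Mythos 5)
Your proposal is correct and is exactly the argument the paper intends: the paper omits the proof of Theorem~\ref{thm:copy-lp-in-E-higher-dim}, stating only that it is the straightforward $d$-dimensional analogue of the proof of Theorem~\ref{thm:copy-lp-in-E}, and your generalization (spherical shells over $m^{d-1}$ angular bumps with $\delta^2 \asymp m^{-\alpha-(d-1)}$ for part (i), and $m^d$ disjoint tensor bumps with $\delta^2 \asymp m^{-2\beta-d}$ for part (ii)) reproduces the stated exponents. You correctly identify the Jacobian $\rho^{d-1}$ as the sole point where the dimension enters, and your remark that the $\dot C^\alpha$-seminorm of $\varphi_{i,m}$ is scale-invariant while the $L^1$-norm picks up $m^{-(d-1)}$ matches the 3D computation (note the paper's displayed $m^{-\alpha+2}$ there is a typo for $m^{-\alpha-2}$, consistent with your version).
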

It is then intriguing to analyze the behavior of $p = 2(d-1)/(\alpha + d -1)$ and $p = 2d/(2\beta +
d)$. from Theorem~\ref{thm:copy-lp-in-E-higher-dim}. In fact, as $d \to \infty$, we observe that $p
\to 2$ in both cases. Thus, the decay of any ${c(f)}$ for cartoon-like images becomes slower as $d$
grows and approaches $\ell^2$ which is actually the rate guaranteed for \emph{all} $f \in
L^2(\R^d)$.

Moreover, by Theorem~\ref{thm:copy-lp-in-E-higher-dim} we see that the optimal approximation error
rate for $N$-term approximations $f_N$ within the class of $d$-dimensional cartoon-like images
$\cE^\beta_\alpha(\R^d)$ is %bounded from below by
 $N^{-\min\{\alpha/(d-1),2\beta/d\}}$.
In this paper we will however restrict ourselves to the case $d=3$ since we, as mentioned
in the introduction, can see this dimension as a critical one.

%*******************************************************************************************
%*******************************************************************************************
\section{Hybrid shearlets in 3D}
\label{sec:shearl-high-dimens}

After we have set our benchmark for directional representation systems
in the sense of stating an optimality criteria for sparse
approximations of the cartoon-like image class $\cE^\beta_{\alpha,L}(\R^3)$, we
next introduce the class of shearlet systems we claim behave optimally.

\subsection{Pyramid-adapted shearlet systems}
\label{sec:pyram-adapt-shearl}

Fix $\scp \in \itvoc{1}{2}$. We scale according to \emph{scaling matrices} $A_{2^j}$,
$\tilde{A}_{2^j}$ or $\breve{A}_{2^j}$, $j \in \Z$, and represent directionality by the \emph{shear
  matrices} $S_k$, $\tilde{S}_k$, or $\breve{S}_k$, $k = (k_1,k_2) \in \Z^2$, defined by
\begin{alignat*}{6} 
  A_{2^j} &=\begin{pmatrix}
    2^{j\scp/2}\! & 0 & 0 \\ 0 & \!\!\!2^{j/2}\! & 0 \\ 0 & 0 & \!\!\!2^{j/2}
  \end{pmatrix}\!, &\;\;&  \tilde{A}_{2^j}&=&\begin{pmatrix}
    2^{j/2} & 0 & 0 \\ 0 & \!\!\!2^{j\scp/2}\! & 0 \\ 0 & 0 & \!\!\!2^{j/2}
  \end{pmatrix}\!, &\;\;& \text{and}&\;\;& \breve{A}_{2^j} &=&\begin{pmatrix}
    2^{j/2}\! & 0 & 0 \\ 0 &\!\!\! 2^{j/2}\! & 0 \\ 0 & 0 & \!\!\!2^{j\scp/2}
  \end{pmatrix},
% \end{align*}
\intertext{and}
%\begin{align*}
S_k &=\begin{pmatrix} 1\; & k_1\; & k_2 \\ 0 & 1 & 0
    \\ 0 & 0 & 1
  \end{pmatrix}, &&  \tilde{S}_k &=&\begin{pmatrix} 1 & 0 & 0 \\ k_1\;
    & 1\; & k_2 \\ 0 & 0 & 1
  \end{pmatrix}, && \text{and}&& \breve{S}_k\; &=&\begin{pmatrix} 1
    & 0 & 0 \\ 0 & 1 & 0 \\ k_1\; & k_2\; & 1
  \end{pmatrix},
\end{alignat*}
% \end{align*}
respectively. The case $\alpha=2$ corresponds to paraboloidal scaling. As $\alpha$ decreases, the
scaling becomes less anisotropic, and allowing $\alpha=1$ would yield isotropic scaling. The action
of isotropic scaling and shearing is illustrated in Figure~\ref{fig:action-scaling-shearing}.   
\begin{figure}[ht]
\centering
\includegraphics{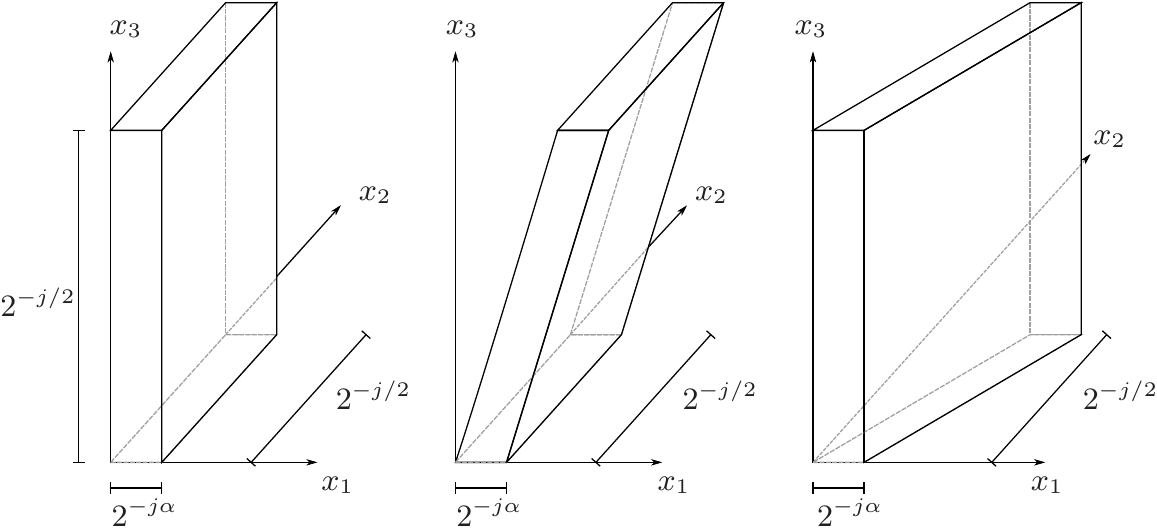}
  \caption{Sketch of the action of scaling ($\alpha \approx 2$) and shearing. For $\psi \in
    L^2(\R^3)$ with $\supp \psi \subset \itvcc{0}{1}^3$ we plot the support of $\psi(S_kA_{j}\cdot)$
    for fixed $j > 0$ and various $k=(k_1,k_2) \in \Z^2$. From left to right: $k_1=k_2=0$, $k_1=0,
    k_2< 0$, and $k_1<0, k_2=0$.}
\label{fig:action-scaling-shearing}
\end{figure}
The translation lattices will be generated by the following matrices: $M_c =
\mathrm{diag}(c_1,c_2,c_2)$, $\tilde{M}_c = \mathrm{diag}(c_2,c_1,c_2)$, and $\breve{M}_c =
\mathrm{diag}(c_2,c_2,c_1)$, where $c_1>0$ and $c_2>0$.

\begin{figure}[ht]
%\vspace*{-1.5em}
%\sidecaption[t]
 \centering
      %\psfragfig{./pyramidrectangle} % [width=5cm]
\includegraphics{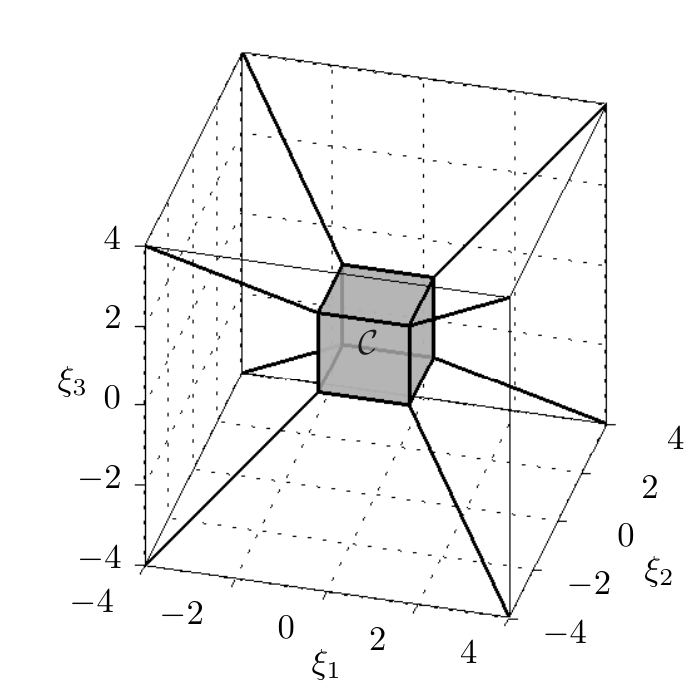}
      \caption{Sketch of the partition of the frequency domain. The centered cube $\cC$ is shown,
        and the arrangement of the six pyramids is indicated by the ``diagonal'' lines. We refer to
        Figure~\ref{fig:pyramids} for a sketch of the pyramids.}
\label{fig:partition}
\end{figure}

We next partition the frequency domain into the following six pyramids:
\begin{align*}
  \cP_\iota = \left\{ \begin{array}{rcl}
      \{(\xi_1,\xi_2,\xi_3) \in \R^3 : \xi_1 \ge 1,\, |\xi_2/\xi_1| \le 1,\, |\xi_3/\xi_1| \le 1\} & : & \iota = 1,\\
      \{(\xi_1,\xi_2,\xi_3) \in \R^3 : \xi_2 \ge 1,\, |\xi_1/\xi_2|
      \le 1,\,
      |\xi_3/\xi_2| \le 1\} & : & \iota = 2,\\
      \{(\xi_1,\xi_2,\xi_3) \in \R^3 : \xi_3 \ge 1,\, |\xi_1/\xi_3|
      \le 1,\,
      |\xi_2/\xi_3| \le 1\} & : & \iota = 3,\\
      \{(\xi_1,\xi_2,\xi_3) \in \R^3 : \xi_1 \le -1,\, |\xi_2/\xi_1| \le 1,\, |\xi_3/\xi_1| \le 1\} & : & \iota = 4,\\
      \{(\xi_1,\xi_2,\xi_3) \in \R^3 : \xi_2 \le -1,\, |\xi_1/\xi_2|
      \le
      1,\, |\xi_3/\xi_2| \le 1\} & : & \iota = 5,\\
      \{(\xi_1,\xi_2,\xi_3) \in \R^3 : \xi_3 \le -1,\, |\xi_1/\xi_3|
      \le 1,\, |\xi_2/\xi_3| \le 1\} & : & \iota = 6,
    \end{array}
  \right.
\end{align*}
and a centered cube
\[
\cC = \{(\xi_1,\xi_2,\xi_3) \in \R^3 : \norm[\infty]{(\xi_1,\xi_2, \xi_3)} < 1\}.
\]

The partition is illustrated in Figures~\ref{fig:partition} and~\ref{fig:pyramids}. This partition
of the frequency space into pyramids allows us to restrict the range of the shear parameters. In
case of the shearlet group systems, one must allow arbitrarily large shear parameters. For the
pyramid-adapted systems, we can, however, restrict the shear parameters to
$\itvcc{-\ceil{2^{j(\scp-1)/2}}}{\ceil{2^{j(\scp-1)/2}}}$. We would like to emphasize that this
approach is important for providing an almost uniform treatment of different directions -- in a
sense of a good approximation to rotation.

\begin{figure}[ht]
 \vspace*{-.8em}
\centering
\subfloat[Pyramids $\cP_1$ and $\cP_4$ and the $\xi_1$ axis.]{%
\includegraphics[height=2.7cm]{./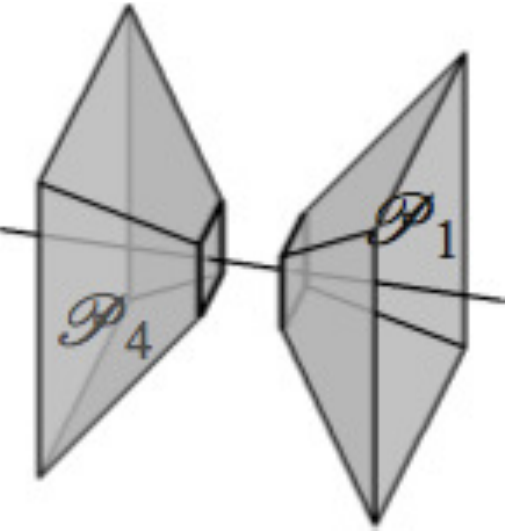}
%\psfragfig{./pyramid14_small}
%\subref{
\label{fig:pyramids14}
}\qquad
\subfloat[Pyramids $\cP_2$ and $\cP_5$ and the $\xi_2$ axis.]{%
\includegraphics[height=2.7cm]{./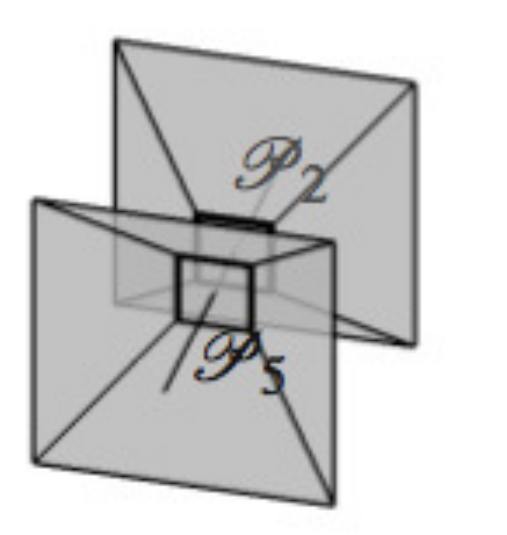}
%\psfragfig{./pyramid25_small}
%\subref{
\label{fig:pyramids25}
}\qquad
\subfloat[Pyramids $\cP_3$ and $\cP_6$ and the $\xi_3$ axis.]{%
\includegraphics[height=2.7cm]{./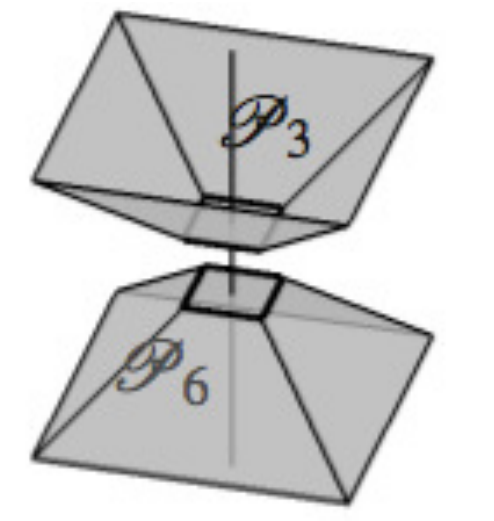}
%\psfragfig{./pyramid36_small}
%\subref{}
\label{fig:pyramids136}
}
\caption{The partition of the
  frequency domain: The ``top'' of the six pyramids.}
\label{fig:pyramids}
\end{figure}

These considerations are made precise in the following definition.

\begin{definition}
  \label{def:discreteshearlets3d}
  For $\scp \in \itvoc{1}{2}$ and $c=(c_1,c_2) \in (\R_+)^2$, the \emph{pyramid-adapted, hybrid
    shearlet system} $\SH(\phi,\psi,\tilde{\psi},\breve{\psi};c, \scp)$ generated by $\phi, \psi,
  \tilde{\psi}, \breve{\psi} \in L^2(\R^3)$ is defined by
  \[
  \SH(\phi,\psi,\tilde{\psi},\breve{\psi};c,\scp) = \Phi(\phi;c_1) \cup \Psi(\psi;c,\scp) \cup \tilde{\Psi}(\tilde{\psi};c,\scp) \cup
  \breve{\Psi}(\breve{\psi};c,\scp),
  \]
  where
  \begin{align*}
    \Phi(\phi;c_1) &= \setprop{\phi_m = \phi(\cdot-m)}{ m \in c_1\Z^3}, \\
    \Psi(\psi;c,\scp) &= \setprop{\psi_{j,k,m} = \scfac {\psi}({S}_{k} {A}_{2^j}\cdot-m) }{ j \ge 0, |k| \le
      \ceilsmall{2^{j(\scp-1)/2}}, m \in
      M_c \Z^3 }, \\
    \tilde{\Psi}(\tilde{\psi};c,\scp) &= \{\tilde{\psi}_{j,k,m} = \scfac \tilde{\psi}(\tilde{S}_{k} \tilde{A}_{2^j}\cdot-m) : j
    \ge 0, |k| \le
    \lceil 2^{j(\scp-1)/2} \rceil, m \in \tilde{M}_c \Z^3 \},\\
    \intertext{and} \breve{\Psi}(\breve{\psi};c,\scp) &= \{\breve{\psi}_{j,k,m} = \scfac \breve{\psi}(\breve{S}_{k}
    \breve{A}_{2^j}\cdot-m) : j \ge 0, |k| \le \lceil 2^{j(\scp-1)/2} \rceil, m \in \breve{M}_c \Z^3 \},
  \end{align*}
  where $j \in \N_0$ and $k \in \Z^2$. %and $m\in \Z^3$.
  Here we have used the vector notation $\abs{k} \le K$ for $k = (k_1,k_2)$ and $K>0$ to denote
  $\abs{k_1} \le K$ \emph{and} $\abs{k_2} \le K$. We will often use $\Psi(\psi)$ as shorthand
  notation for $\Psi(\psi;c,\scp)$. If $\SH(\phi,\psi,\tilde{\psi},\breve{\psi};c,\scp)$ is a frame
  for $L^2(\R^3)$, we refer to $\phi$ as a \emph{scaling function} and $\psi$, $\tilde{\psi}$, and
  $\breve{\psi}$ as \emph{shearlets}. Moreover, we often simply term
  $\SH(\phi,\psi,\tilde{\psi},\breve{\psi};c,\scp)$ \emph{pyramid-adapted shearlet system}.
 \end{definition}

 We let $\cP = \cP_1 \cup \cP_4$, $\tilde\cP = \cP_2 \cup \cP_5$, and $\breve\cP = \cP_3 \cup
 \cP_6$. In the remainder of this paper, we shall mostly consider $\cP$; the analysis for
 $\tilde\cP$ and $\breve\cP$ is similar (simply append $\tilde{\cdot}$ and $\breve{\cdot}$,
 respectively, to suitable symbols).

 We will often assume the shearlets to be compactly supported in spatial domain. If
 \eg $\supp \psi \subset \itvcc{0}{1}^3$, then the shearlet element $\psi_{j,k,m}$ will be supported
 in a parallelepiped with side lengths $2^{-j\alpha/2}$, $2^{-j/2}$, and $2^{-j/2}$, see
 Figure~\ref{fig:action-scaling-shearing}. For $\scp=2$ this shows that the shearlet elements will
 become plate-like as $j \to \infty$. As $\scp$ approaches $1$ the scaling becomes almost isotropic
 giving almost isotropic cube-like elements. The key fact to mind is, however, that our shearlet
 elements always become plate-like as $j \to \infty$ with aspect ratio depending on $\alpha$.

 In general, however, we will have very weak requirements on the shearlet generators $\psi$,
 $\tilde{\psi}$, and $\breve{\psi}$. As a typical minimal requirement in our construction and
 approximation results we will require the shearlet $\psi$ to be \emph{feasible}.
\begin{definition}\label{def:feasible}
  Let $\delta, \gamma >0$. A function $\psi \in L^2(\R^3)$ is called a $(\delta,
  \gamma)$-\emph{feasible shearlet} associated with $\cP$, if there exist $q \ge q' >0$, $q \ge r
  >0$, $q \ge s >0$ such that
  \begin{equation}\label{eq:psi}
    \abssmall{\hat{\psi}(\xi)} \lesssim
    \min\setsmall{1,\abs{q\xi_1}^{\vap}} \;
    \min{\setsmall{1,\abs{q'\xi_1}^{-\dep}}} \;
    \min{\setsmall{1,\abs{r\xi_2}^{-\dep}}} \; \min{\setsmall{1,\abs{s\xi_3}^{-\dep}}},
  \end{equation}
  for all $\xi=(\xi_1,\xi_2,\xi_3)\in \R^3$. For the sake of brevity, we will often simply say that
  $\psi$ is $(\delta, \gamma)$-\emph{feasible}.
 %We will sometimes say that $\psi$ is a $(\delta, \gamma)$-feasible shearlet 
\end{definition}

Let us briefly comment on the decay assumptions in (\ref{eq:psi}). If $\psi$ is compactly
supported, then $\hat \psi$ will be a continuous function satisfying the decay assumptions as
$\abs{\xi} \to \infty$ for sufficiently small $\gamma>0$. The decay condition controlled by
$\delta$ can be seen as a vanishing moment condition in the $x_1$-direction which suggests that 
a $(\delta,\gamma)$-feasible shearlet will behave as a wavelet in the $x_1$-direction. 

\section{Construction of compactly supported shearlets}
In the following subsection we will describe the construction of pyramid-adapted shearlet systems
%$\SH(\phi,\psi,\tilde{\psi},\breve{\psi};c,\scp)$ 
with compactly supported generators. This
construction uses ideas from the classical construction of wavelet frames in \cite[\S 3.3.2]{Dau92};
we also refer to the recent construction of cone-adapted shearlet systems in $L^2(\R^2)$ described
in the paper \cite{KKL10a}.

\label{sec:constr-comp-supp}
\subsection{Covering properties}

We fix $\alpha \in \itvoc{1}{2}$, and let $\psi \in L^2(\R^3)$ be a feasible shearlet associated
with $\cP$. We then define the function $\Phi : \cP \times \R^3 \to \R$
by
\begin{equation}\label{eq:Phi}
\Phi(\xi,\omega) = \sum_{j \ge 0} \sum_{k \le \ceilsmall{2^{j(\scp-1)/2}}}
\abs{\hat{\psi}(S_{-k}^{T} A_{2^{-j}}\xi)} \abs{\hat{\psi}(S_{-k}^{T}A_{2^{-j}}\xi + \omega)}.
\end{equation}
This function measures to which extent the effective part of the supports of the scaled and sheared versions of the shearlet
generator overlaps. Moreover, it is linked to the so-called $t_q$-equations albeit with absolute value of the functions in
the sum~(\ref{eq:Phi}). We also introduce the function $\Gamma : \R^3
\to \R$ defined by
\[
\Gamma(\omega) = \esssup_{\xi \in \cP} \Phi(\xi,\omega),
\]
measuring the maximal extent to which these scaled and sheared versions overlap for a given distance $\omega \in \R^3$. 
The values
\begin{equation}
 \label{eq:defiLinLsup}
L_{\mathit{inf}} = \essinf_{\xi \in \cP} \Phi(\xi,0)
\qquad \text{and} \qquad
L_{\mathit{sup}} = \esssup_{\xi \in \cP} \Phi(\xi,0),
\end{equation}
will relate to the classical discrete Calder\'{o}n condition. Finally, the value
\begin{equation} \label{eq:defR}
R(c) = \sum\limits_{m \in \Z^{3}\setminus\{0\}} \left[ \Gamma\left({M_c^{-1}{m}}\right)
\Gamma\left({-M_c^{-1}{m}}\right)\right]^{1/2}, \quad \text{where} \quad c = (c_1,c_2) \in \R_+^2,
\end{equation}
measures the average of the symmetrized function values $\Gamma(M_c^{-1}m)$ and is again related to
the so-called $t_q$-equations.

We now first turn our attention to the terms $L_{\mathit{sup}}$ and $R(c)$ and provide upper bounds
for those. These estimates will later be used for estimates for frame bounds associated to a
shearlet system; we remark that the to be derived estimates~(\ref{eq:Upp}) and (\ref{eq:upper_R})
also hold when the essential supremum in the definition of $L_{\mathit{sup}}$ and $R(c)$ is taken
over all $\xi \in \R^3$.

To estimate the effect of shearing, we will repeatedly use the following estimates:
\begin{equation}
\sup_{(x,y)\in \R^2}\sum_{k \in \Z}  \min\set{1,\abs{y}} 
\min\set{1,\abs{x+ky}^{-\dep}} 
 \le 3+\frac{2}{\dep-1}=:  C(\dep) \label{eq:sk2}
 \end{equation}
and 
\begin{equation*}
\sup_{(x,y)\in \R^2}\sum_{k \neq 0}  \min\set{1,\abs{y}} 
\min\set{1,\abs{x+ky}^{-\dep}} 
 \le 2+\frac{2}{\dep-1} =C(\dep)-1 %\label{eq:skneq0}
 \end{equation*}
for $\dep>1$.

\begin{proposition} \label{prop:Lsupfinite}
  Suppose $\psi \in L^2(\R^3)$ is a $(\delta,\gamma)$-feasible shearlet with $\vap > 1$ and $\dep >
  1/2$.
Then
\begin{equation}\label{eq:Upp}
L_{\mathit{sup}} \le  \frac{q^2}{rs} \,  C(2\dep)^2 \,
\Bigl( \frac{1}{1-2^{(-\vap+1)\scp}}+ \ceil{\frac{2}{\scp} \log_{2}\Bigl(\frac{q}{q'}\Bigr)}
+ 1 \Bigr) < \infty, %\frac{1}{1-2^{-2\dep}}\Bigr) < \infty,
\end{equation}
where $C(\dep)=3+\frac{2}{\dep-1}$. 
\end{proposition}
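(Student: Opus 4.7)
The plan is to bound $\Phi(\xi,0)$ uniformly for $\xi\in\cP$ and then pass to the essential supremum. By the symmetry between $\cP_1$ and $\cP_4$, it suffices to treat $\xi\in\cP_1$, so $\xi_1\ge 1$ and $\abs{\xi_2},\abs{\xi_3}\le \xi_1$. A direct matrix computation gives
\[
S_{-k}^{T}A_{2^{-j}}\xi = \bigl(2^{-j\scp/2}\xi_1,\; 2^{-j/2}\xi_2-k_1 2^{-j\scp/2}\xi_1,\; 2^{-j/2}\xi_3-k_2 2^{-j\scp/2}\xi_1\bigr).
\]
Writing $t_j:=2^{-j\scp/2}\xi_1$ and inserting this into the $(\delta,\gamma)$-feasibility bound \eqref{eq:psi}, the two $\xi_1'$-factors depend only on $j$ and collapse to
\[
F(j,\xi_1) := \min\{1,(qt_j)^{2\delta}\}\,\min\{1,(q't_j)^{-2\gamma}\},
\]
while the remaining two factors are of the form $\min\{1,|x+k_i y|^{-2\gamma}\}$ with shift $y=-rt_j$ (respectively $y=-st_j$) in the $k_1$-sum (respectively $k_2$-sum).

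Next, freeze $j$ and sum over shears. Since $|k_i|\le \ceil{2^{j(\scp-1)/2}}$ is a subset of $\Z$, inequality \eqref{eq:sk2} applied with $\gamma$ replaced by $2\gamma$ (legitimate because $2\gamma>1$) yields
\[
\sum_{k_1 \in \Z}\min\{1,|x+k_1 y|^{-2\gamma}\}\le \frac{C(2\gamma)}{\min\{1,|y|\}}=C(2\gamma)\max\{1,1/(rt_j)\},
\]
and analogously for $k_2$ with $r$ replaced by $s$. Multiplying these two bounds produces
\[
\Phi(\xi,0)\le C(2\gamma)^{2}\sum_{j\ge 0} F(j,\xi_1)\,\max\{1,1/(rt_j)\}\,\max\{1,1/(st_j)\},
\]
so everything reduces to bounding the $j$-sum by the parenthesized expression in the statement.

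The $j$-sum is handled by a three-regime decomposition driven by the size of $t_j$. In \emph{Regime A} ($qt_j\le 1$, equivalently $j\ge j_0:=\ceil{(2/\scp)\log_2(q\xi_1)}$) the hypothesis $q\ge r,s$ forces $rt_j,st_j\le 1$, so the summand collapses to $q^{2\delta}t_j^{2\delta-2}/(rs)$; the geometric series (with ratio $2^{(1-\delta)\scp}<1$ thanks to $\delta>1$) sums to exactly $q^{2}/[rs(1-2^{(1-\delta)\scp})]$, the $\xi_1^{2\delta-2}$ cancelling against $2^{-j_0\scp(\delta-1)}\le (q\xi_1)^{-2(\delta-1)}$. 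In \emph{Regime B} ($1/q<t_j\le 1/q'$) one has $F=1$, and $qt_j>1$ gives $\max\{1,1/(rt_j)\}\le q/r$ and $\max\{1,1/(st_j)\}\le q/s$, so each admissible $j$ contributes at most $q^{2}/(rs)$; the number of integers in the half-open interval $[(2/\scp)\log_{2}(q'\xi_1),(2/\scp)\log_{2}(q\xi_1))$ is bounded by $\ceil{(2/\scp)\log_{2}(q/q')}$. In \emph{Regime C} ($t_j>1/q'$) the same shearing bound $q^{2}/(rs)$ applies, while $F=(q't_j)^{-2\gamma}$ forms a geometric sequence in $j$ with ratio $2^{-\scp\gamma}<1$ (since $\scp\gamma>1/2$); summing backwards from the B/C boundary, where $F\le 1$, gives a uniform constant which is absorbed into the ``$+1$'' term of the statement. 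Adding the three regime contributions and reinstating the factor $C(2\gamma)^{2}$ yields the claimed bound.

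The main obstacle is the scaling decomposition: one must split at the two thresholds $t_j=1/q$ and $t_j=1/q'$ and track the boundary-index arithmetic precisely enough that every factor of $\xi_1$ cancels, so the bound is genuinely uniform in $\xi\in\cP$. Geometric summability in Regimes A and C relies essentially on $\delta>1$ and $\gamma>1/2$ combined with $\scp>1$, while the logarithmic gap in Regime B is an unavoidable artifact of the mismatch between the two frequency scales $q$ and $q'$ built into the feasibility estimate \eqref{eq:psi}.
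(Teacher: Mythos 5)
Your proof is correct and follows essentially the same route as the paper's: the feasibility bound \eqref{eq:psi}, the shear-sum estimate \eqref{eq:sk2} producing the factor $\frac{q^2}{rs}C(2\gamma)^2$, and a three-regime split of the remaining $j$-sum at the thresholds $t_j=1/q$ and $t_j=1/q'$ yielding the geometric, counting, and tail terms respectively. The only differences are cosmetic — you divide \eqref{eq:sk2} by $\min\{1,|y|\}$ and carry the factors $\max\{1,1/(rt_j)\}$, $\max\{1,1/(st_j)\}$ into the $j$-sum rather than pre-cancelling them against a piece of the $\delta$-factor as the paper does, and your Regime C constant is really $1/(1-2^{-\alpha\gamma})>1$ rather than $1$ — but the paper's own proof makes the identical simplification in arriving at the ``$+1$'' of \eqref{eq:Upp}.
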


\begin{proof}
%Let $\xi = (\xi_{1}, \xi_{2})^{T} \in {\mathbb{R}^{2}}$.
By \eqref{eq:psi}, we immediately have the following bound for $\Phi(\xi,0)$:
\begin{align*}
\Phi(\xi,0) &\leq \sup_{\xi \in \R^3} \sum_{j \ge 0}
\min{\setbig{1,|q2^{-j\scp/2}\xi_1|^{2\vap}}}\;
\min\setbig{1,|q'2^{-j\scp/2}\xi_1|^{-2\dep}} \\ %\label{eq:estimate1-constr} \\ 
& \phantom{\leq \sup \sum \;} \cdot \sum_{k_1 \in
  \Z}\min\setbig{1,|r(2^{-j/2}\xi_2+k_12^{-j\scp/2}\xi_1)|^{-2\dep}} \\ %\nonumber   \\ 
& \phantom{\leq \sup \sum \;} \cdot \sum_{k_2 \in
  \Z}\min\setbig{1,|s(2^{-j/2}\xi_3+k_2 2^{-j\scp/2}\xi_1)|^{-2\dep}}. %\nonumber 
\end{align*}
Letting $\eta_1 = q\xi_1$ and using that $q\ge r$ and $q\ge s$, we obtain
\begin{multline}
  \Phi(\xi,0) \le \sup_{(\eta_1,\xi_2,\xi_3) \in \R^3} \sum_{j \ge 0}
  \min{\setbig{1,\absbig{2^{-j\scp/2}\eta_1}^{2\vap-2}}}
  \min\setbig{1,\abssmall{q'q^{-1}2^{-j\scp/2}\eta_1}^{-2\dep}}  \\
   \phantom{\sup_{(\eta_1,\xi_2,\xi_3) \in \R^3}} 
\cdot \sum_{k_1 \in \Z} \frac qr
  \min\setbig{1,\abssmall{rq^{-1}2^{-j\scp/2}\eta_1}}
  \min\setbig{1,\abssmall{r2^{-j/2}\xi_2+k_1 rq^{-1}2^{-j\scp/2}\eta_1}^{-2\dep}}   \\
   %\phantom{\leq \sup_{(\eta_1,\xi_2,\xi_3) \in \R^3} \;} 
\cdot \sum_{k_2 \in \Z} \frac qs
  \min\setbig{1,\abssmall{sq^{-1}2^{-j\scp/2}\eta_1}} \min\setbig{1,\abssmall{s2^{-j/2}\xi_3+k_2
  sq^{-1}2^{-j\scp/2}\eta_1}^{-2\dep}}. \label{eq:Phi0}
\end{multline}
By \eqref{eq:sk2},  the sum over $k_1 \in \Z$ in
\eqref{eq:Phi0} is bounded by $\frac{q}{r} C(2\dep)$.
Similarly, the sum over $k_2 \in \Z$ in
\eqref{eq:Phi0} is bounded by $\frac{q}{s} C(2\dep)$.
 Hence, we can continue \eqref{eq:Phi0} by
\begin{align*}
\Phi(\xi,0)
&\leq \frac{q^2}{rs} C(2\dep)^2 \sup_{\eta_1 \in \R} \sum_{j \ge 0}
\min{\setbig{1,\absbig{2^{-j\scp/2}\eta_1}^{2\vap-2}}} \min\setbig{1,\absbig{q'q^{-1}2^{-j\scp/2}\eta_1}^{-2\dep}}  \\
&= \frac{q^2}{rs} C(2\dep)^2 \sup_{\eta_1 \in \R} \Bigl(\sum_{j \ge 0}
\absbig{2^{-j\scp/2}\eta_1}^{2\vap-2} \charfct{\itvco{0}{1}}(\abssmall{2^{-j\scp/2}\eta_1}) +
\charfct{\itvco{1}{q/q'}}(\abssmall{2^{-j\scp/2}\eta_1})\\
&  \phantom{\le \frac{q^2}{rs} C(2\dep)^2 \sup_{\eta_1 \in \R} \sum \;\;\,} 
\absbig{q'q^{-1}2^{-j\scp/2}\eta_1}^{-2\dep} \charfct{\itvco{q/q'}{\infty}}(\abssmall{2^{-j\scp/2}\eta_1})\Bigl)\\
&\leq \frac{q^2}{rs} C(2\dep)^2 \sup_{\eta_1 \in \R} \Bigl(
\sum_{\abs{2^{-j\scp/2}\eta_1} \leq 1}\absbig{2^{-j\scp/2}\eta_1}^{2\vap-2}+ \sum_{j\ge 0}
\chi_{[1,\frac{q}{q'})}(|2^{-j\scp/2}\eta_1|) \\  %\hspace*{-0.5cm}
&  \phantom{\le \frac{q^2}{rs} C(2\dep)^2 \sup_{\eta_1 \in \R} \sum \;\;\,} 
+ \sum_{|q'q^{-1}2^{-j\scp/2}\eta_1| \ge 1} |q'q^{-1}2^{-j\scp/2}\eta_1|^{-2\dep}\Bigr).
\end{align*}
The claim \eqref{eq:Upp} now follows from (\ref{eq:quotientsum-finite}), (\ref{eq:quotientsum-inf})
and \eqref{eq:quotientsum-t-big}.
\end{proof}

The next result, Proposition~\ref{prop:estimateR}, exhibits how $R(c)$ depends on the parameters
$c_1$ and $c_2$ from the translation matrix $M_c$. In particular, we see that the size of $R(c)$ can
be controlled by choosing $c_1$ and $c_2$ small. The result can be simplified as follows: For any
$\dep'$ satisfying $1<\dep'<\dep-2$, there exist positive constants $\kappa_1$ and $\kappa_2$
independent on $c_1$ and $c_2$ such that
\begin{equation*} %\label{eq:decayR}
R(c) \le %\kappa_1\left(
         %\frac{2c_1}{q'}\right)^{\dep}+\kappa_2\left(\frac{2qc_2}{q'\min{\{r,s\}}}\right)^{\dep-\dep'}
         %.
\kappa_1\, c_1^{\,\dep} + \kappa_2\, c_2^{\,\dep-\dep'}.
\end{equation*}
The constants  $\kappa_1$ and $\kappa_2$ depends on the parameters $q,q',r,s,\delta$ and $\gamma$, and the result below shows
exactly how this dependence is.  

\begin{proposition}
\label{prop:estimateR}
Let $\psi \in L^2(\R^3)$ be a $(\delta,\gamma)$-feasible shearlet for $\delta > 2\gamma > 6$, and
let the translation lattice parameters $c = (c_1,c_2)$ satisfy $c_1 \ge c_2
>0$. 
Then, for any $\dep'$ satisfying $1<\dep'<\dep-2$, we have
\begin{multline}\label{eq:upper_R}
R(c) \leq T_1 \bigl(8 \zeta(\dep-2) - 4
   \zeta(\dep-1) + 2 \zeta(\dep) \bigr) \\ + 
   3\min{\set{\ceil{\frac{c_1}{c_2}},2}} T_2 \bigl(16 \zeta(\dep-2) - 4
   \zeta(\dep-1)\bigr) + T_3  \bigl(24 \zeta(\dep-2) + 2 \zeta(\dep)\bigr),
\end{multline}
where
\begin{align*}
  T_1&= \frac{q^2}{rs} C(\dep)^2 \left(\frac{2c_1}{q'}
  \right)^\dep \Bigl( \ceil{\log_{2}\Bigl(\frac{q}{q'}\Bigr)}
  +\frac{1}{1-2^{-\vap+2\dep}}+\frac{1}{1-2^{-\dep}}\Bigr) \\
  T_2 &= \frac{q^2}{rs} C(\dep)C(\dep') \left(\frac{2qc_2}{q'\min\{r,s\}}
  \right)^{\dep-\dep'} \Bigl( 2\ceil{ \log_{2}\Bigl(\frac{q}{q'}\Bigr)}
  +\frac{1}{1-2^{-\vap+2\dep}} \\
& \phantom{= \frac{q^2}{rs} C(\dep)C(\dep') \left(\frac{2qc_2}{q'\min\{r,s\}}
  \right)^{\dep-\dep'}+} +\frac{1}{1-2^{-\dep}}+\frac{1}{1-2^{-\vap+\dep+\dep'}}+\frac{1}{1-2^{-\dep'}}\Bigr)
  \\
  T_3 &= \frac{q^2}{rs} C(\dep)^2 \left(\frac{2c_1}{q'}
  \right)^\dep \frac{1}{1-2^{-\dep}},
%      \label{eq:case2a}
\end{align*}
and $\zeta$ is the Riemann zeta function.  
\end{proposition}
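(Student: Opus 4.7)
The plan is to extend the argument of Proposition~\ref{prop:Lsupfinite} by tracking the translation parameter $\omega = M_c^{-1} m$ through the feasibility estimate, deriving a pointwise bound for $\Gamma(\omega)$ that decays in each coordinate of $\omega$, and then summing over the dual lattice $m \in \Z^3 \setminus \{0\}$. First, I would insert the $(\delta,\gamma)$-feasibility estimate \eqref{eq:psi} into both factors of $\Phi(\xi,\omega)$, producing, for each coordinate direction, a product of the form $\min\set{1,\abs{a_i}^{-\gamma}}\min\set{1,\abs{a_i+\omega_i}^{-\gamma}}$ (with the additional vanishing-moment factor in the first coordinate).

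The essential pointwise step is to split the $\omega$-dependence off from the $\xi$-dependence. For each coordinate I would apply an elementary inequality of the schematic form
\[
\min\set{1,\abs{a}^{-\gamma}}\min\set{1,\abs{a+b}^{-\gamma}} \le C\, \min\set{1,\abs{b/2}^{-\gamma'}}\;\min\set{1,\abs{a}^{-(\gamma-\gamma')}}\min\set{1,\abs{a+b}^{-(\gamma-\gamma')}},
\]
valid for any $0<\gamma'<\gamma$, and similar for the $\delta$-factor in the first coordinate. This produces a pure $\omega$-decay factor that governs summability over $m$, leaving a residual $\xi$-dependent piece which can be bounded by exactly the same shear-sum estimate \eqref{eq:sk2} and the geometric/zeta bounds \eqref{eq:quotientsum-finite}--\eqref{eq:quotientsum-t-big} used for Proposition~\ref{prop:Lsupfinite}. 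The output is an estimate
\[
\Gamma(\omega) \;\le\; (\text{explicit constant in } q,q',r,s,\delta,\gamma,\gamma')\cdot \prod_{i=1}^{3}\min\set{1,\abs{\omega_i}^{-\gamma}},
\]
with the constant involving the $C(\gamma), C(\gamma')$ and $\lceil \log_2(q/q')\rceil$ factors visible in the $T_j$.

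With the pointwise estimate in hand, I would substitute $\omega_1 = m_1/c_1$, $\omega_2 = m_2/c_2$, $\omega_3 = m_3/c_2$ and sum over $m \in \Z^3\setminus \set{0}$, partitioning the lattice according to which of $m_1, m_2, m_3$ are zero and which coordinate of $M_c^{-1}m$ is dominant. The three contributions $T_1, T_2, T_3$ arise naturally: $T_1$ collects the terms in which the coarse coordinate $m_1/c_1$ carries the decay (explaining the factor $(2c_1/q')^\delta$); $T_2$ collects the terms in which the fine coordinates $m_2/c_2$ or $m_3/c_2$ carry the decay, and this is precisely where the interpolation between $\gamma$ and $\gamma'$ is used to keep the tail summable, producing the exponent $\delta-\delta'$ and the factor $\min\set{\lceil c_1/c_2\rceil,2}$ that counts how many coarse-lattice shifts couple to a given fine-lattice shift; $T_3$ collects a residual contribution. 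The coefficients $8\zeta(\gamma-2) - 4\zeta(\gamma-1) + 2\zeta(\gamma)$, $16\zeta(\gamma-2) - 4\zeta(\gamma-1)$, $24\zeta(\gamma-2) + 2\zeta(\gamma)$ are produced by straightforward inclusion--exclusion: summation $\sum_{m_i\ne 0}\abs{m_i}^{-(\gamma-k)}$ returns $2\zeta(\gamma-k)$, and combining across the three coordinates yields these linear combinations.

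The main obstacle will be step two --- correctly choosing the interpolated exponent $\gamma'$ for each coordinate split and tracking the many constants that flow through. The hypothesis $\delta > 2\gamma > 6$ is exactly what is required: after sacrificing one factor of $\gamma'$-decay to produce a summable series in the fine direction, the remaining $\gamma - \gamma' > 1$ must still dominate the shear sums via \eqref{eq:sk2}, and the $\delta$-factor must still dominate the scaling sum after the analogous trade in the first coordinate, forcing $\delta > 2\gamma$. Beyond this, the proof is bookkeeping: once the correct split and interpolation are set up, the $T_j$ expressions assemble directly from the constants appearing in Proposition~\ref{prop:Lsupfinite} together with the new $c_1, c_2$-dependent factors arising from the lattice sum.
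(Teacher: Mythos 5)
Your overall strategy (bound $\Gamma$ pointwise via the feasibility estimate, trade part of the decay exponent to extract $\omega$-decay, then sum over the lattice) is aligned with the paper's, but the central claimed output --- a bound $\Gamma(\omega)\lesssim \prod_{i=1}^{3}\min\{1,|\omega_i|^{-\gamma}\}$ with the \emph{full} exponent $\gamma$ in every coordinate --- is both unobtainable and inconsistent with your own interpolation step. In the second and third coordinates the shear sums over $k_1,k_2$ consume part of the exponent: after writing $\gamma=\gamma'+\gamma''$ you can extract at most $|\,\cdot\,|^{-\gamma''}$ decay in $\omega_2,\omega_3$ while retaining $\gamma'>1$ for the sums via \eqref{eq:sk2}. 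This is not a cosmetic loss; it is precisely why $T_2$ carries the reduced power $(2qc_2/(q'\min\{r,s\}))^{\gamma-\gamma'}$ while $T_1$ carries the full power $(2c_1/q')^{\gamma}$. A genuine product bound with exponent $\gamma$ throughout would make the statement symmetric in $c_1$ and $c_2$, which it is not. Moreover, the quantity you actually extract in the shear coordinates is $|2\omega_2/(2^{-j\alpha/2}\xi_1)|^{-\gamma''}$, not $|\omega_2|^{-\gamma''}$: the leftover factor $|2^{-j\alpha/2}\xi_1|^{\gamma''}$ must be reabsorbed by the first-coordinate factors (this is the role of the inequalities \eqref{eq:ineq3}--\eqref{eq:ineq4} and the true reason the hypothesis $\delta>2\gamma$ is needed), a step your plan does not confront.

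The missing structural idea is the double case analysis the paper performs: the scale sum is split according to whether $|2^{-j\alpha/2}\xi_1|\le\norm[\infty]{\omega}$ or not (the latter range produces $T_3$, which is present for every $m$ and is not merely a ``residual''), and within the first range one distinguishes which coordinate of $\omega$ realizes $\norm[\infty]{\omega}$ --- full $\gamma$-decay when $|\omega_1|$ dominates, only $\gamma''$-decay otherwise. The lattice sum is then partitioned by the weighted cone $\tilde{\cQ}=\{c_1^{-1}|m_1|>c_2^{-1}|m_2|,\,c_1^{-1}|m_1|>c_2^{-1}|m_3|\}$, whose comparison with the unweighted cone produces the factor $3\min\{\lceil c_1/c_2\rceil,2\}$, and the specific combinations $8\zeta(\gamma-2)-4\zeta(\gamma-1)+2\zeta(\gamma)$, etc., come from counting lattice points on max-norm shells inside and outside that cone ($24d^2+2$ points on the shell of radius $d$, of which $2(2d-1)^2$ lie in the cone). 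A coordinate-wise product bound summed independently over each $m_i$ would yield products of one-dimensional zeta sums and cannot reproduce these coefficients. So while several of your ingredients are the right ones, the key pointwise estimate as stated would fail, and the case structure that generates $T_1$, $T_2$, $T_3$ and the stated constants is absent.
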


\begin{proof}
The proof can be found the Appendix~\ref{sec:proof-proposition-Rc}.
\end{proof}

The tightness of the estimates of $R(c)$ in Proposition~\ref{prop:estimateR} are important for the
construction of shearlet frames in the next section since the estimated frame bounds will depend
heavily on the estimate of $R(c)$. If we allowed a cruder estimate of $R(c)$, the proof of
Proposition~\ref{prop:estimateR} could be considerably simplified; as we do not allow this, the
slightly technical proof is relegated to the appendix.

\subsection{Frame constructions}
\label{sec:frame-constructions}

The results in this section (except Corollary~\ref{proposition:upperbd}) are presented without proofs since these are
straightforward generalizations of results on cone-adapted shearlet frames for $L^2(\R^2)$ from \cite{KKL10a}. We first
formulate a general sufficient condition for the existence of pyramid-adapted shearlet frames.
\begin{theorem}
\label{thm:general-suff-condition}
Let $\psi \in L^2(\R^3)$ be a $(\delta,\gamma)$-feasible shearlet (associated with $\cP$) for $\delta > 2\gamma > 6$, and let the translation lattice
parameters $c = (c_1,c_2)$ satisfy $c_1 \ge c_2 >0$. %, and let $R(c)$ be defined as in \eqref{eq:defR}.
If $R(c) < L_{\mathit{inf}}$,
then $\Psi(\psi)$ is a frame for $\check L^2(\cP):=\setpropsmall{f\in L^2(\R^3)}{\supp \hat f \subset \cP}$ with frame bounds
$A$ and $B$ satisfying
\[
\frac{1}{|\det M_c|} [{L}_{\mathit{inf}} - R(c)] \le A \le B \le \frac{1}{|\det M_c|} [{L}_{\mathit{sup}} +
R(c)].
\]
\end{theorem}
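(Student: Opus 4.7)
My plan is to follow the standard ``$t_q$-equation'' / Walnut-type derivation in the Fourier domain, which is exactly the route taken in \cite{KKL10a} for the 2D cone-adapted case, and adapt it to the 3D pyramid-adapted geometry. First I would compute $\hat{\psi}_{j,k,m}(\xi) = 2^{-j(\alpha+2)/4}\, e^{-2\pi i \langle S_{-k}^T A_{2^{-j}}\xi, m\rangle}\, \hat{\psi}(S_{-k}^T A_{2^{-j}}\xi)$ and apply Plancherel, so that $\langle f,\psi_{j,k,m}\rangle$ becomes, after the change of variable $\eta = S_{-k}^T A_{2^{-j}}\xi$, a Fourier coefficient on the lattice $M_c\Z^3$ of the function $\eta \mapsto \hat{f}(A_{2^j}S_k\eta)\,\overline{\hat{\psi}(\eta)}$. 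Summing $|\langle f,\psi_{j,k,m}\rangle|^2$ over $m\in M_c\Z^3$ via Parseval for the lattice (equivalently, applying Poisson summation to $|h_{j,k}|^2$ with $h_{j,k}(\eta)=\hat{f}(A_{2^j}S_k\eta)\overline{\hat{\psi}(\eta)}$) produces a diagonal term together with off-diagonal contributions indexed by $n\in\Z^3\setminus\{0\}$; then summing over the admissible $(j,k)$ gives
\[
\sum_{j,k,m}|\langle f,\psi_{j,k,m}\rangle|^2 = \frac{1}{|\det M_c|}\!\int_{\R^3}\!|\hat{f}(\xi)|^2\,\Phi(\xi,0)\,d\xi + \frac{1}{|\det M_c|}\sum_{n\neq 0}\mathcal{R}_n(f),
\]
with $\mathcal{R}_n(f)=\sum_{j,k}\int \hat{f}(\xi)\overline{\hat{f}(\xi+S_k^{T}A_{2^j}M_c^{-1}n)}\,\overline{\hat{\psi}(S_{-k}^T A_{2^{-j}}\xi)}\,\hat{\psi}(S_{-k}^T A_{2^{-j}}\xi+M_c^{-1}n)\,d\xi$.

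Next I would bound the diagonal contribution. Because $f\in\check L^2(\cP)$, the integrand is supported in $\cP$, and the definitions in~\eqref{eq:defiLinLsup} give
\[
\frac{L_{\mathit{inf}}}{|\det M_c|}\,\|f\|^2 \le \frac{1}{|\det M_c|}\!\int_{\cP}\!|\hat{f}(\xi)|^2\,\Phi(\xi,0)\,d\xi \le \frac{L_{\mathit{sup}}}{|\det M_c|}\,\|f\|^2.
\]
For the off-diagonal term, I would apply Cauchy--Schwarz to each $\mathcal{R}_n(f)$ twice: once in $\xi$ to separate $|\hat{f}(\xi)|$ from $|\hat{f}(\xi+S_k^T A_{2^j}M_c^{-1}n)|$, and once on the double sum in $(j,k)$ to separate the two shifted shearlet products. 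After a change of variable in one of the two integrals, this produces the symmetric bound
\[
|\mathcal{R}_n(f)| \le \|f\|^2\, \bigl[\Gamma(M_c^{-1}n)\,\Gamma(-M_c^{-1}n)\bigr]^{1/2},
\]
and summing over $n\neq 0$ yields $\bigl|\sum_{n\ne 0}\mathcal{R}_n(f)\bigr|\le R(c)\|f\|^2$ by definition~\eqref{eq:defR}. Combining the two bounds gives the asserted frame inequalities, and the hypothesis $R(c)<L_{\mathit{inf}}$ guarantees a strictly positive lower frame bound.

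The main obstacle is justifying the interchange of summations and integrals and the unconditional convergence of the Poisson-type identity; this is where the feasibility hypothesis $\delta>2\gamma>6$ enters. Propositions~\ref{prop:Lsupfinite} and~\ref{prop:estimateR} (already established) show $L_{\mathit{sup}}<\infty$ and $R(c)<\infty$ under these assumptions, which both validates the formal manipulations above and ensures the resulting frame bounds are finite. The remainder of the argument is bookkeeping: the shear parameter is restricted to $|k|\le \lceil 2^{j(\alpha-1)/2}\rceil$ in the pyramid-adapted setting, but since $f$ is supported in $\cP$ in the Fourier domain, the sheared shearlets with larger $k$ that would have been needed to cover other pyramids do not contribute, so the truncation causes no loss in the lower bound. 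Since this is essentially a verbatim translation of the cone-based 2D computation in \cite{KKL10a} into the 3D pyramid-based setting, I would follow their presentation step by step, adjusting only the anisotropic scaling exponent from $(\alpha+2)/4$ in 2D-analog form to its 3D value and the shear range from a single index to the pair $k=(k_1,k_2)\in\Z^2$.
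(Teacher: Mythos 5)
Your proposal is correct and follows exactly the route the paper intends: the paper omits the proof of Theorem~\ref{thm:general-suff-condition}, stating that it is a straightforward generalization of the cone-adapted 2D argument in \cite{KKL10a} (itself modeled on \cite[\S 3.3.2]{Dau92}), which is precisely the Plancherel/periodization computation with diagonal term controlled by $L_{\mathit{inf}}$, $L_{\mathit{sup}}$ and off-diagonal $t_q$-terms controlled by $R(c)$ via the symmetrized Cauchy--Schwarz estimate that you describe. The finiteness of $L_{\mathit{sup}}$ and $R(c)$ from Propositions~\ref{prop:Lsupfinite} and~\ref{prop:estimateR} justifies the formal manipulations, as you note.
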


Let us comment on the sufficient condition for the existence of shearlet frames in Theorem~\ref{thm:general-suff-condition}.
Firstly, to obtain a lower frame bound $A$, we choose a shearlet generator $\psi$ such that
\begin{equation}\label{eq:covering}
\cP \subset \bigcup_{j \ge 0} \bigcup_{k \in \Z^2} A_{2^j}S^T_{k} \Omega, 
\end{equation}
where 
\[
\Omega = \{ \xi \in \R^3 : \abssmall{\hat \psi(\xi)} > \rho \}, \,\, \text{for some} \,\, \rho > 0.
\]
For instance, one can choose $\Omega = [1,2]\times[-1/2,1/2]\times[-1/2,1/2]$ here. 
From \eqref{eq:covering}, we have ${L}_{\mathit{inf}} > \rho^2$. 
Secondly, note that $R(c) \rightarrow 0$ as $c_1 \to 0^{+}$ and $c_2 \to 0^{+}$  by Proposition~\ref{prop:estimateR} (see $T_1,T_2$, and $T_3$ in (5.7)). 
In particular, for a given ${L}_{\mathit{inf}}>0$, one can make $R(c)$ sufficiently small for some translation lattice parameter $c = (c_1,c_2)$ so that 
${L}_{\mathit{inf}} - R(c) > 0$. Finally, Proposition~\ref{prop:Lsupfinite} and \ref{prop:estimateR} imply the existence of an upper frame bound $B$. We refer to \cite{KLL11} for concrete examples with frame bound estimates.

By the following result we then have an explicitly given family of shearlets satisfying the
assumptions of Theorem~\ref{thm:general-suff-condition} at disposal.
\begin{theorem} 
\label{thm:compact-for-pyramid}
Let $K, L \in \N$ be such that $L \ge 10$ and $\frac{3L}{2} \le K \le 3L-2$, and define a shearlet $\psi \in L^2(\R^2)$ by
\[
\hat{\psi}(\xi) =
m_1(4\xi_1)\hat{\phi}(\xi_1)\hat{\phi}(2\xi_2)\hat\phi(2\xi_3), \quad
\xi = (\xi_1,\xi_2,\xi_3) \in \R^3,
\]
where $m_0$ is the low pass filter satisfying
\[
|m_0(\xi_1)|^2 = (\cos(\pi\xi_1))^{2K}\sum_{n=0}^{L-1} \genfrac(){0pt}{0}{K-1+n}{n} (\sin(\pi\xi_1))^{2n}, \quad \xi_1 \in \R,
\]
$m_1$ is the associated bandpass filter defined by
\[
|m_1(\xi_1)|^2 = |m_0(\xi_1+1/2)|^2, \quad \xi_1 \in \R,
\]
and $\phi$ is the scaling function given by
\[
\hat{\phi}(\xi_1) = \prod_{j=0}^{\infty} m_0(2^{-j}\xi_1), \quad \xi_1 \in \R.
\]
Then there exists a sampling constant $\hat c_1>0$ such that the shearlet system $\Psi(\psi)$ forms a frame for $\check
L^2(\cP)$ for any sampling matrix $M_c$ with $c=(c_1,c_2) \in (\R_+)^2$ and $c_2 \le c_1 \le \hat{c}_1$. Furthermore, the
corresponding frame bounds $A$ and $B$ satisfy
\[
\frac{1}{|\det(M_c)|} [{L}_{\mathit{inf}} - R(c)] \le A \le B \le \frac{1}{|\det(M_c)|} [{L}_{\mathit{sup}} +
R(c)],
\]
where $R(c) < {L}_{\mathit{inf}}$.
\end{theorem}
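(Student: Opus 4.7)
The plan is to invoke Theorem~\ref{thm:general-suff-condition}: I need to verify (a) that the explicit $\psi$ defined above is $(\delta,\gamma)$-feasible for some $\delta>2\gamma>6$, (b) that $L_{\mathit{inf}}>0$, and (c) that the translation parameters can be chosen so that $R(c)<L_{\mathit{inf}}$. Once all three are in hand, Theorem~\ref{thm:general-suff-condition} gives both the frame property and the stated two-sided bounds immediately.

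For (a), I exploit the tensor-product structure $\hat\psi(\xi)=m_1(4\xi_1)\hat\phi(\xi_1)\hat\phi(2\xi_2)\hat\phi(2\xi_3)$ together with the standard Daubechies construction encoded by $m_0$. The filter $m_0$ has a zero of order $K$ at $\xi_1=1/2$, hence $m_1$ has a zero of order $K$ at $\xi_1=0$; this yields the vanishing-moment bound $\abs{m_1(4\xi_1)}\lesssim \min\set{1,\abs{\xi_1}^{K}}$, which will furnish the factor $\min\set{1,\abssmall{q\xi_1}^{\delta}}$ with $\delta$ controlled by $K$. The regularity of $\phi$ (and hence decay of $\hat\phi$) is controlled by $K$ and $L$ via Cohen's classical estimates applied to the partial product $\prod_{j\ge 0}m_0(2^{-j}\cdot)$; with $L\ge 10$ and $\tfrac{3L}{2}\le K\le 3L-2$ one obtains $\abssmall{\hat\phi(\xi_i)}\lesssim \min\set{1,\abs{\xi_i}^{-\gamma}}$ for some $\gamma>3$, and analogously $\abssmall{m_1(4\xi_1)\hat\phi(\xi_1)}\lesssim\min\set{1,\abs{\xi_1}^{-\gamma}}$ (bandpass character in the first variable). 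Multiplying the three decay estimates together gives the product bound in \eqref{eq:psi} with suitable $q\ge q',r,s$ and $\delta>2\gamma>6$.

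For (b), I choose $\Omega=\itvcc{1}{2}\times\itvcc{-1/2}{1/2}\times\itvcc{-1/2}{1/2}$ and verify the covering property \eqref{eq:covering}. Given any $\xi=(\xi_1,\xi_2,\xi_3)\in\cP$ with $\xi_1\ge 1$ and $\abs{\xi_2/\xi_1},\abs{\xi_3/\xi_1}\le 1$, there is a unique $j\ge 0$ with $2^{-j\alpha/2}\xi_1\in\itvcc{1}{2}$, and then integers $k_1,k_2$ with $\abs{k_i}\le \ceilsmall{2^{j(\alpha-1)/2}}$ so that $S_{-k}^{T}A_{2^{-j}}\xi\in\Omega$; indeed the shear range is exactly calibrated for this. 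Since $\absbig{\hat\psi}$ is continuous and bounded below on $\Omega$ (by construction of $m_1$ and $\hat\phi$), this gives $L_{\mathit{inf}}\ge \rho^2>0$, and the symmetric argument handles the lower pyramid.

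For (c), I read off from Proposition~\ref{prop:estimateR} the bound $R(c)\le \kappa_1 c_1^{\gamma}+\kappa_2 c_2^{\gamma-\gamma'}$, which shows $R(c)\to 0$ as $c_1,c_2\to 0^+$. Hence a sufficiently small $\hat c_1>0$ exists with $R(c)<L_{\mathit{inf}}$ for all $c_2\le c_1\le\hat c_1$, and Theorem~\ref{thm:general-suff-condition} finishes the argument. The main obstacle is really step (a): one must track Cohen's regularity estimate carefully enough to confirm the quantitative condition $\delta>2\gamma>6$ given the algebraic constraints on $K,L$; step (b) is routine once the range of shear indices is seen to match the aperture of the pyramid $\cP$, and step (c) is immediate from the proposition.
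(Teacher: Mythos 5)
Your proposal is correct and follows essentially the same route as the paper: the paper states this theorem without proof, deferring to the 2D construction in \cite{KKL10a}, and its discussion immediately following Theorem~\ref{thm:general-suff-condition} outlines precisely your three steps (feasibility of the Daubechies-type generator with $\delta>2\gamma>6$ enforced by the constraints on $K$ and $L$, the covering property \eqref{eq:covering} giving $L_{\mathit{inf}}>0$, and $R(c)\to 0$ from Proposition~\ref{prop:estimateR}). The one point to tighten --- present equally in the paper's own informal remark --- is that the box $\itvcc{1}{2}\times\itvcc{-1/2}{1/2}\times\itvcc{-1/2}{1/2}$ is strictly too narrow for integer shears to realize \eqref{eq:covering} (the admissible interval for $k_i$ has length at most $1$ and can miss every integer), so $\Omega$ must be taken as the genuine superlevel set $\setpropsmall{\xi}{\abssmall{\hat\psi(\xi)}>\rho}$, which is wide enough in the transverse variables.
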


Theorem~\ref{thm:compact-for-pyramid} provides us with a family of compactly supported shearlet
frames for $\check L^2(\cP)$. For these shearlet systems there is a bias towards the $x_1$ axis,
especially at coarse scales, since they are defined for $\check L^2(\cP)$, and hence, the frequency
support of the shearlet elements overlaps more significantly along the $x_1$ axis. In order to
control the upper frame bound, it is therefore desirable to have a denser translation lattice in the
direction of the $x_1$ axis than in the other axis directions, \ie $c_1 \ge c_2$.

In the next result we extend the construction from Theorem~\ref{thm:compact-for-pyramid} for $\check
L^2(\cP)$ to all of $L^2(\R^3)$. We remark that this type of extension result differs from the
similar extension for band-limited (tight) shearlet frames since in the latter extension procedure
one needs to introduce artificial projections of the frame elements onto the pyramids in the Fourier
domain.
\begin{theorem} \label{thm:compact-completeframe} Let $\psi \in L^2(\R^3)$ be the shearlet with
  associated scaling function $\phi \in L^2(\R)$ introduced in
  Theorem~\ref{thm:compact-for-pyramid}, and set $\phi(x_1,x_2,x_3)=\phi(x_1)\phi(x_2)\phi(x_3)$,
  $\tilde{\psi}(x_1,x_2,x_3) = \psi(x_2,x_1,x_3)$, and $\breve{\psi}(x_1,x_2,x_3) =
  \psi(x_3,x_2,x_1)$. Then the corresponding shearlet system
  $\SH(\phi,\psi,\tilde{\psi},\breve{\psi};c,\scp)$ forms a frame for $L^2(\R^3)$ for the sampling
  matrices $M_c$, $\tilde{M}_c$, and $\breve{M}_c$ with $c=(c_1,c_2) \in (\R_+)^2$ and $c_2 \le c_1
  \le \hat{c}_1$.
\end{theorem}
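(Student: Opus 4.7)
The plan is to establish the frame inequality on all of $L^2(\R^3)$ by a Calder\'on-type argument in the Fourier domain, generalizing the proof technique underlying Theorem~\ref{thm:general-suff-condition} and handling the four sub-collections of $\SH(\phi,\psi,\tilde{\psi},\breve{\psi};c,\scp)$ simultaneously.

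First I would verify the upper frame bound. Each of the four sub-collections $\Phi(\phi;c_1)$, $\Psi(\psi;c,\scp)$, $\tilde{\Psi}(\tilde{\psi};c,\scp)$, $\breve{\Psi}(\breve{\psi};c,\scp)$ is a Bessel sequence for $L^2(\R^3)$: the scaling-function case is standard since $\phi$ is compactly supported, and for the three shearlet sub-collections the required estimate follows from Proposition~\ref{prop:Lsupfinite} and Proposition~\ref{prop:estimateR}, noting the remark after (\ref{eq:defR}) that both bounds remain valid with the essential supremum taken over all of $\R^3$ (not just $\cP$). Summing the four Bessel constants yields an upper frame bound $B$.

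The main work is the lower bound. For $f \in L^2(\R^3)$ I would expand
\[
\sum_{g \in \SH}|\langle f,g\rangle|^2 = \frac{1}{|\det M_c|}\int_{\R^3}|\hat f(\xi)|^2\,\Phi_{\mathit{tot}}(\xi,0)\,d\xi + \mathcal{R}(f),
\]
where $\Phi_{\mathit{tot}}(\xi,0) = |\hat\phi(\xi)|^2 + \Phi_\Psi(\xi,0) + \Phi_{\tilde\Psi}(\xi,0) + \Phi_{\breve\Psi}(\xi,0)$ and the cross-term remainder $\mathcal{R}(f)$ is bounded by $R_{\mathit{tot}}(c)\|f\|^2$ with $R_{\mathit{tot}}(c) \to 0$ as $c_1,c_2 \to 0^+$ (by Proposition~\ref{prop:estimateR} applied to each shearlet subsystem, together with a standard estimate for the $\phi$-translates). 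The crucial step is exhibiting a uniform positive lower bound $A_0>0$ on $\Phi_{\mathit{tot}}(\xi,0)$ on all of $\R^3$: on $\cP$ this is $\Phi_\Psi(\xi,0) \ge L_{\mathit{inf}} > 0$ from Theorem~\ref{thm:compact-for-pyramid}, the analogous bound holds on $\tilde\cP$ and $\breve\cP$ by the symmetry of the construction (since $\tilde\psi,\breve\psi$ are just coordinate permutations of $\psi$), and on the central cube $\cC$ the continuity of $\hat\phi$ together with $\hat\phi(0)=1$ and the explicit low-pass filter construction yield a uniform positive lower bound on $|\hat\phi(\xi)|^2$. Since $\R^3 = \cC \cup \cP \cup \tilde\cP \cup \breve\cP$, the minimum $A_0$ of these four bounds is strictly positive, and choosing $c_1,c_2 \le \hat c_1$ small enough so that $R_{\mathit{tot}}(c)<A_0$ delivers the strictly positive lower frame bound $\frac{1}{|\det M_c|}[A_0 - R_{\mathit{tot}}(c)]$.

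The hardest part will be controlling the four simultaneous cross-term contributions by a single pair $(c_1,c_2)$. This should be manageable because by the symmetry of the construction, $R_\Psi$, $R_{\tilde\Psi}$, and $R_{\breve\Psi}$ all satisfy identical bounds under the relabeling of $q,r,s$ in Proposition~\ref{prop:estimateR}. A subtler point is that none of $\hat\phi, \hat\psi, \hat{\tilde\psi}, \hat{\breve\psi}$ is compactly supported in its nominal frequency region, so effective supports overlap across pyramid boundaries. This is precisely why working with the absolute-value quantities $\Phi(\xi,0)$ instead of a signed Calder\'on sum is essential here: overlap can only contribute positively to the lower bound, never spoil it, so the pyramidal covering argument is robust against the leakage.
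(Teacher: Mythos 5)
Your overall strategy -- upper bound by summing Bessel constants of the four subsystems, lower bound by a Daubechies-type diagonal-plus-$t_q$ argument with the diagonal term $\Phi_{\mathit{tot}}(\xi,0)$ bounded below uniformly on $\R^3$ and the off-diagonal remainder driven to zero by shrinking $(c_1,c_2)$ -- is exactly the route the paper intends (it omits the proof, referring to \cite{KKL10a} and \cite[\S 3.3.2]{Dau92}, and this is precisely the argument used for Corollary~\ref{proposition:upperbd}). Your remark that the diagonal contributions of the four subsystems simply add, so that cross-pyramid overlap can only help the lower bound, is also correct and is the reason the non-band-limited construction goes through.

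There is, however, one concrete gap: the claim that ``the continuity of $\hat\phi$ together with $\hat\phi(0)=1$ and the explicit low-pass filter construction yield a uniform positive lower bound on $|\hat\phi(\xi)|^2$'' on $\cC$ is false for this construction. Since $|m_0(\xi_1)|^2=(\cos(\pi\xi_1))^{2K}\sum_n\binom{K-1+n}{n}(\sin(\pi\xi_1))^{2n}$ vanishes exactly at $\xi_1\in\tfrac12+\Z$, one has $m_0(1/2)=0$ and hence $\hat\phi(1)=\prod_{j\ge0}m_0(2^{-j})=0$; by continuity $|\hat\phi(\xi_1)|\to0$ as $\xi_1\to1^-$, so $\essinf_{\xi\in\cC}|\hat\phi(\xi)|^2=0$ (the tensor product $\hat\phi(\xi_1)\hat\phi(\xi_2)\hat\phi(\xi_3)$ degenerates near every face of the cube). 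The uniform positivity of $\Phi_{\mathit{tot}}(\xi,0)$ in an inner neighborhood of $\partial\cC$ must therefore be supplied by the coarse-scale terms of the three shearlet subsystems, not by the scaling function alone; e.g.\ at $\xi=(1,0,0)$ one checks that $\hat\psi(A_{2^{-j}}\xi)=m_1(2^{2-j\alpha/2})\hat\phi(2^{-j\alpha/2})\hat\phi(0)^2\ne0$ for $j$ large enough. Establishing $\essinf_{\xi\in\R^3}\Phi_{\mathit{tot}}(\xi,0)>0$ is thus a genuine \emph{joint} covering verification across the cube/pyramid interfaces (the 3D analogue of the covering lemma in \cite{KKL10a}), and it is the one step of the proof that cannot be dispatched by inspecting $\hat\phi$ and the three $L_{\mathit{inf}}$'s separately. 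Two further cosmetic points: the normalizing factor for the translate system $\Phi(\phi;c_1)$ is $c_1^{-3}$, not $|\det M_c|^{-1}=(c_1c_2^2)^{-1}$, so the four diagonal terms carry different weights; and to get the frame property for \emph{all} $c_2\le c_1\le\hat c_1$ rather than for \emph{some} small $c$, you should invoke the monotonicity of the bound on $R(c)$ in $(c_1,c_2)$ from Proposition~\ref{prop:estimateR}.
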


For the pyramid $\cP$, we allow for a denser translation lattice $M_c \Z^3$ along the $x_1$ axis,
\ie $c_2 \le c_1$, precisely as in Theorem~\ref{thm:compact-for-pyramid}. For the other pyramids
$\tilde{\cP}$ and $\breve{\cP}$, we analogously allow for a denser translation lattice along the
$x_2$ and $x_3$ axes, respectively; since the position of $c_1$ and $c_2$ in $\tilde{M}_c$ and
$\breve{M}_c$ are changed accordingly, this still corresponds to $c_2 \le c_1$.

The final result of this section generalizes Theorem~\ref{thm:compact-completeframe} in the sense that it shows that not only the shearlet 
introduced in Theorem~\ref{thm:compact-for-pyramid}, but also any $(\delta,\gamma)$-feasible shearlet $\psi$
satisfying \eqref{eq:covering} generates a shearlet frame for $L^2(\R^3)$ provided that $\delta > 2\gamma > 6$.  
For this, we change the definition of $R(c)$, $L_{\mathit{inf}}$ and $L_{\mathit{sup}}$ in (\ref{eq:defiLinLsup}) and
  (\ref{eq:defR}) so that the essential infimum and supremum are taken over all of $\R^3$ and not only over the
  pyramid $\cP$, and we denote these new constants again by $R(c)$, $L_{\mathit{inf}}$ and $L_{\mathit{sup}}$.

\begin{corollary}\label{proposition:upperbd}
  Let $\psi \in L^2(\R^3)$ be a $(\delta,\gamma)$-feasible shearlet for $\delta > 2\gamma > 6$.
  Also, define $\tilde{\psi}$ and $\breve{\psi}$ as in Theorem~\ref{thm:compact-completeframe} and choose 
  $\phi \in L^2(\R^3)$ such that $|\hat \phi(\xi)| \lesssim (1+|\xi|)^{-\gamma}$.
  Suppose that $L_{\mathit{inf}} > 0$.
  Then $\SH(\phi,\psi,\tilde{\psi},\breve{\psi};c,\scp)$ forms a frame for $L^2(\R^3)$ for the sampling
  matrices $M_c$, $\tilde{M}_c$, and $\breve{M}_c$ for some translation lattice parameter $c = (c_1,c_2)$.
\end{corollary}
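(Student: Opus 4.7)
The plan is to adapt the proof strategy of Theorem~\ref{thm:general-suff-condition} (and its pyramid-wise extension Theorem~\ref{thm:compact-completeframe}) to the full space $L^2(\R^3)$, using the hypothesis $L_{\mathit{inf}}>0$ to extract a lower frame bound from $\Psi(\psi)$ alone and then adding Bessel bounds for the other three subsystems. First, I would note that $\delta>2\gamma>6$ yields $\delta>1$ and $\gamma>3>1/2$, so Propositions~\ref{prop:Lsupfinite} and \ref{prop:estimateR} apply; moreover, their proofs rely only on the pointwise inequality \eqref{eq:psi} and never on the restriction $\xi\in\cP$, so the estimates $L_{\mathit{sup}}<\infty$ and $R(c)\to 0$ as $(c_1,c_2)\to(0^+,0^+)$ remain valid with the enlarged domain $\R^3$.

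The central step is the $t_q$-equation decomposition underlying the proof of Theorem~\ref{thm:general-suff-condition}. Applied to $\Psi(\psi)$ with integration taken over $\R^3$ in place of $\cP$, it would yield, for every $f\in L^2(\R^3)$ and every $c=(c_1,c_2)$ with $c_2\le c_1$,
\[
\frac{L_{\mathit{inf}}-R(c)}{|\det M_c|}\,\norm{f}^2 \le \sum_{\psi_{j,k,m}\in\Psi(\psi)}|\innerprods{f}{\psi_{j,k,m}}|^2 \le \frac{L_{\mathit{sup}}+R(c)}{|\det M_c|}\,\norm{f}^2.
\]
Since $L_{\mathit{inf}}>0$ by assumption and $R(c)\to 0$, choosing $(c_1,c_2)$ small enough that $R(c)<L_{\mathit{inf}}$ already makes $\Psi(\psi)$ a frame for $L^2(\R^3)$.

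For the remaining three subsystems I would establish finite Bessel bounds. The symmetric systems $\tilde\Psi(\tilde\psi)$ and $\breve\Psi(\breve\psi)$ are coordinate permutations of $\Psi(\psi)$, so the same $t_q$-argument supplies Bessel constants controlled by $(L_{\mathit{sup}}+R(c))/|\det\tilde M_c|$ and its counterpart; for $\Phi(\phi;c_1)$, the decay $|\hat\phi(\xi)|\lesssim (1+|\xi|)^{-\gamma}$ with $\gamma>3$ guarantees $\sup_\xi\sum_{k\in\Z^3}|\hat\phi(\xi+c_1^{-1}k)|^2<\infty$, whence Bessel by the standard shift-invariant theory. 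Since these three Bessel estimates contribute nonnegatively to the Calder\'on-type sum, they preserve the lower frame inequality from $\Psi(\psi)$; the sum of all four Bessel bounds is the upper frame bound, and the proof is complete once $(c_1,c_2)$ is possibly shrunk further so that a single choice simultaneously satisfies the smallness requirements for all three shearlet subsystems. The main obstacle I anticipate is verifying carefully that the $t_q$-expansion of \cite{KKL10a} still produces the same off-diagonal estimate $R(c)/|\det M_c|$ when the integration domain is enlarged from $\cP$ to $\R^3$: this is essentially cosmetic, since the Cauchy--Schwarz step producing $R(c)$ does not use $\xi\in\cP$, but it is the technical heart of the argument and must be audited carefully.
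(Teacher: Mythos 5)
Your proposal is correct and follows essentially the same route as the paper's own (very terse) proof: both observe that the estimates of Propositions~\ref{prop:Lsupfinite} and~\ref{prop:estimateR} extend verbatim once the essential suprema are taken over all of $\R^3$ rather than $\cP$, shrink $c=(c_1,c_2)$ until $R(c)<L_{\mathit{inf}}$, and then invoke the $t_q$-equation/discrete Calder\'on argument of \cite[\S 3.3.2]{Dau92} and \cite{KKL10a} to produce the frame bounds. The only difference is organizational: you extract the lower frame bound from $\Psi(\psi)$ alone (taking the hypothesis $L_{\mathit{inf}}>0$ over $\R^3$ at face value) and append $\tilde\Psi$, $\breve\Psi$, and $\Phi(\phi;c_1)$ as Bessel sequences, whereas the paper asserts the two-sided bound for the full system in one stroke.
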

\begin{proof}
  The proofs of Proposition~\ref{prop:Lsupfinite} and \ref{prop:estimateR} show that the same estimate
  as in (\ref{eq:Upp}) and (\ref{eq:upper_R}) holds for our new $R(c)$ and $L_{\mathit{sup}}$; this
  is easily seen since the very first estimate in both these proofs extends the supremum from $\cP$
  to $ \R^3$. Furthermore, by Proposition~\ref{prop:estimateR}, one can choose $c = (c_1,c_2)$ such that  
  $L_{\mathit{inf}}-R(c) > 0$. Now, we have that $L_{\mathit{sup}}+R(c)$ is bounded and $L_{\mathit{inf}}-R(c) > 0$. Since
  $R(c)$ and $L_{\mathit{sup}}$ are associated to the $t_q$-terms and a discrete Calder\'on
  condition, respectively, following arguments as in \cite[\S 3.3.2]{Dau92} or \cite{KKL10a} show that frame bounds $A$ and $B$ exist and that
\[0 < (R(c)-L_{\mathit{inf}})/\det M_c \leq A \leq B \leq (R(c)+L_{\mathit{sup}})/\det M_c < \infty.\]
\end{proof}

\section{Optimal sparsity of 3D shearlets}
\label{sec:optimal-sparsity-3d}
Having 3D shearlet frames with compactly supported generators at hand
by Theorem~\ref{thm:compact-completeframe}, we turn to
sparse approximation of cartoon-like images by these shearlet systems.

\subsection{Sparse approximations of 3D Data}
\label{sec:sparse-appr-3d}

Suppose $\SH(\phi,\psi,\tilde{\psi},\breve{\psi};c,\scp)$ forms a frame for $L^2(\R^3)$ with frame
bounds $A$ and $B$. Since the shearlet system is a countable set of functions, we can denote it by
$\SH(\phi,\psi,\tilde{\psi},\breve{\psi};c,\scp) = \set{\sigma_i}_{i \in I}$ for some countable
index set $I$. We let $\setsmall{\tilde{\sigma}_i}_{i \in I}$ be the canonical dual frame of
$\set{\sigma_i}_{i \in I}$. As our $N$-term approximation $f_N$ of a cartoon-like image $f \in
\cE^\beta_\alpha(\R^3)$ by the frame $\SH(\phi,\psi,\tilde{\psi};c)$, we then take, as in
Equation~(\ref{eq:frame-n-term-largest}),
\[
f_N = \sum_{i \in I_N} c_i\, \tilde{\sigma}_i, \qquad c_i=\innerprod{f}{\sigma_i},
\]
where $(\innerprod{f}{\sigma_i})_{i \in I_N}$ are the $N$ largest coefficients
$\innerprod{f}{\sigma_i}$ in magnitude.

The benchmark for optimal sparse approximations that we are aiming for is, as we showed in
Section~\ref{sec:optimal-sparsity}, for all $f= f_0 + \chi_B f_1 \in \cE^\beta_\alpha(\R^3)$,
\[
  \norm[L^2]{f-f_N}^2 \lesssim N^{-\scp/2} \qquad \text{as $N \to \infty$,} 
\]
and
\[
\abs{c^*_n} \lesssim  n^{-\frac{\alpha+2}{4}}, \qquad \text{as $n \to \infty$,}
\]
where $c^\ast=(c^\ast_n)_{n\in \N}$ is a decreasing (in modulus) rearrangement of $c=(c_i)_{i \in
  I}$. The following result shows that compactly supported pyramid-adapted, hybrid shearlets almost deliver
this approximation rate for all $1<\alpha\le\beta \le 2$. We remind the reader that the parameters
$\cp$ and $\mu$, suppressed in our notation $\cE^\beta_\alpha(\R^3)$, are 
bounds of the homogeneous H\"older $\dot{C}^\alpha$ norm of the radius function for the discontinuity
surface $\pa B$ and of the $C^\beta$ norms of $f_0$ and $f_1$, respectively.

\begin{theorem}
\label{thm:3d-opt-sparse}
Let $\alpha \in \itvoc{1}{2}$, $c \in (\R_+)^2$, and let $\phi, \psi,
\tilde{\psi}, \breve{\psi} \in L^2(\R^3)$ be compactly supported.
Suppose that, for all $\xi = (\xi_1,\xi_2,\xi_3) \in \R^3$, the
function $\psi$ satisfies:
\begin{romannum}
\item $|\hat\psi(\xi)| \le C \cdot \min\lbrace 1,|\xi_1|^{\vap}\rbrace \cdot
  \min\lbrace 1,|\xi_1|^{-\dep}\rbrace \cdot \min \lbrace 1,|\xi_2|^{-\dep} \rbrace \cdot
  \min\lbrace 1,|\xi_3|^{-\dep}\rbrace $,
\item $\left|\frac{\partial}{\partial \xi_i}\hat \psi(\xi)\right| \le
  |h(\xi_1)| \cdot \left(1+\frac{|\xi_2|}{|\xi_1|}\right)^{-\dep}
  \left(1+\frac{|\xi_3|}{|\xi_1|}\right)^{-\dep}, \qquad i=2,3,$
  % \item $\left|\frac{\partial}{\partial \xi_3}\hat \psi(\xi)\right|
  %   \le |h(\xi_1)| \cdot
  %   \left(1+\frac{|\xi_3|}{|\xi_1|}\right)^{-\dep}$,
\end{romannum}
where $\vap > 8$, $\dep \ge 4$, $h \in L^1(\R)$, and $C$ a constant,
and suppose that $\tilde{\psi}$ and $\breve{\psi}$ satisfy analogous
conditions with the obvious change of coordinates. Further, suppose
that the shearlet system
$\SH(\phi,\psi,\tilde{\psi},\breve{\psi};c,\scp)$ forms a frame for
$L^2(\R^3)$.

Let $\tau=\tau(\alpha)$ be given by
\begin{align}\label{eq:eps-of-alpha}
  \tau(\alpha)=\frac{3 (2-\alpha)(\alpha-1)(\alpha+2)}{2 (9
    \alpha^2+17 \alpha-10)},
\end{align}
and let $\beta \in \itvcc{\alpha}{2}$. Then, for any $\cp,\mu > 0$, the
shearlet frame $\SH(\phi,\psi,\tilde{\psi},\breve{\psi};c,\scp)$
provides nearly optimally sparse approximations of functions $f \in
\cE_\alpha^\beta(\R^3)$ in the sense that 
\begin{equation}\label{eq:3d-approx-rate}
  \norm[L^2]{f-f_N}^2 \lesssim
  \left\{\begin{aligned} 
      N^{-\scp/2+\tau}, &\qquad    \text{if }\beta \in \itvco{\alpha}{2},\\
      N^{-1}\,(\log{N})^2, &\qquad  \text{if }  \beta=\alpha=2,
    \end{aligned}
  \right\} 
  \text{ as $N \to \infty$,} 
\end{equation}
where $f_N$ is the N-term approximation obtained by choosing
the N largest shearlet coefficients of $f$, and
\begin{equation}
  \label{eq:3d-coeff-decay-rate}
%  \sup_{f \in \cE_\alpha^\beta(\R^3)} 
%\abs{\innerprod{f}{\psi_\lambda}}_{(N)}
\abs{c^*_n} \lesssim 
  \left\{\begin{aligned} 
      n^{-\frac{\alpha+2}{4}+\frac{\tau}{2}}, &\qquad    \text{if }\beta \in \itvco{\alpha}{2},\\
      n^{-1} \, \log n, &\qquad  \text{if }  \beta=\alpha=2,
    \end{aligned}
  \right\} 
  \text{ as $n \to \infty$,}
\end{equation}
 where $c=\setpropsmall{\innerprods{f}{\mathring{\psi}_\lambda}}{\lambda \in
     \varLambda, \mathring{\psi}=\psi, \mathring{\psi}=
     \tilde\psi  \text{ or } \breve\psi}$ and $c^\ast=(c^\ast_n)_{n\in \N}$ is a decreasing
   (in modulus) rearrangement of $c$.
\end{theorem}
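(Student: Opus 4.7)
The plan is to show that the rearranged coefficient sequence $c^{*}$ satisfies the claimed decay (\ref{eq:3d-coeff-decay-rate}); the approximation estimate (\ref{eq:3d-approx-rate}) then follows directly from Lemma~\ref{lemma:n-term-frame-approx} and the summation $\sum_{n>N} n^{-(\alpha+2)/2+\tau} \asymp N^{-\alpha/2+\tau}$. Equivalently, I would show that the shearlet coefficients belong to weak $\ell^{p}$ with $p=4/(\alpha+2-2\tau)$ by controlling, at every threshold $\epsilon>0$, the number of pairs $(j,k,m)$ for which $\abs{\innerprods{f}{\psi_{j,k,m}}}>\epsilon$. By the symmetry built into the definition of $\SH(\phi,\psi,\tilde\psi,\breve\psi;c,\scp)$ it suffices to bound the contribution from the shearlets $\psi_{j,k,m}$ associated with $\cP$. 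A smooth partition of unity subordinate to a fine covering of $\itvcc{0}{1}^3$ together with the piecewise structure of $\pa B$ lets me reduce to two local models: either $f$ is globally $C^\beta$-smooth on the support of the partition element, or $\pa B$ is a single $C^\alpha$-smooth patch which I represent as a graph $x_1=E(x_2,x_3)$ with $\norm[\dot C^\alpha]{E}\le\nu$; the junctions of patches form a one-dimensional set and their contribution will be shown to be of strictly lower order. The scaling function coefficients $\innerprods{f}{\phi_m}$ contribute only finitely many terms of bounded modulus and are absorbed into the constant.

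For each scale $j\ge 0$ I split the shearlet coefficients into \emph{interior} coefficients, where $\supp\psi_{j,k,m}$ does not meet $\pa B$, and \emph{edge} coefficients, where it does. For interior coefficients $f$ is $C^\beta$-smooth throughout the support, and I would combine the vanishing-moment behaviour in $\xi_1$ encoded by hypothesis~(i) (with $\delta>8$, so $\psi$ behaves like a wavelet along the $x_1$-axis) with the mixed directional decay of hypothesis~(ii) to obtain an estimate of the form $\abs{\innerprods{f}{\psi_{j,k,m}}}\lesssim 2^{-j(\beta(\alpha+2)/4 + \text{lower order})}$ after a Taylor expansion of $f$ up to order $\lfloor\beta\rfloor$ around the centre of the support. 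Since $\beta\ge\alpha>1$, these interior terms are summable in $(j,k,m)$ at a rate strictly faster than $N^{-\alpha/2}$ and therefore do not affect the final bound.

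The core of the argument is the edge coefficients. At scale $j$ and shear $k$ the element $\psi_{j,k,m}$ is supported in a parallelepiped of dimensions roughly $2^{-j\scp/2}\times 2^{-j/2}\times 2^{-j/2}$ whose short axis points in direction $(1, k_1 2^{-j(\scp-1)/2}, k_2 2^{-j(\scp-1)/2})$. Using the graph $x_1=E(x_2,x_3)$ I fix a reference point on $\pa B$, expand $E$ to order $\floor{\alpha}$, and declare a shear $k$ to be \emph{aligned} with the tangent plane at that point when the short axis of $\psi_{j,k,m}$ is parallel to the normal of $\pa B$ up to the angular tolerance dictated by the $C^\alpha$-deviation $\sim 2^{-j\alpha/2}$ over the lateral extent $2^{-j/2}$; this selects roughly $2^{j(2-\alpha)/2}$ shears per reference point. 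For aligned edge shearlets I use the trivial bound $\abs{\innerprods{f}{\psi_{j,k,m}}}\le\norm[L^\infty]{f}\,\norm[L^1]{\psi_{j,k,m}}\lesssim 2^{-j(\alpha+2)/4}$, and count these by dividing the (finite) surface area of $\pa B$ by the $2^{-j/2}\times 2^{-j/2}$ lateral footprint, getting $\sim 2^j$ translations per aligned shear. For non-aligned edge shearlets I again invoke hypothesis~(i) together with repeated integration by parts in $\xi_1$ after localising $\chi_B$ to a half-space approximation of $B$; each deviation $\Delta k$ of $k$ from an aligned shear produces an extra factor $(\abs{\Delta k} 2^{-j(\scp-1)/2})^{-\gamma}$, and summing over $\Delta k$ and $m$ gives the desired count. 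Organising the counts as $N_\epsilon=\#\setsmall{(j,k,m) : \abs{\innerprods{f}{\psi_{j,k,m}}}>\epsilon}$ and extracting the critical scale $j=j(\epsilon)$ at which the threshold $\epsilon\asymp 2^{-j(\alpha+2)/4}$ is reached, I would read off $N_\epsilon\lesssim\epsilon^{-p}$ and conclude~(\ref{eq:3d-coeff-decay-rate}).

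The hard part will be the derivation of the polynomial factor $\tau(\alpha)$ in~(\ref{eq:eps-of-alpha}). When $\alpha=2$ the scaling is perfectly paraboloidal, the aligned/transverse dichotomy is sharp, and the count gives the optimal $N^{-1}$ up to a polylog loss coming from the number of scales contributing to a given threshold. For $\alpha<2$ the shearlet anisotropy $2^{-j\scp/2}$ no longer matches the $C^\alpha$-deviation scale, and an intermediate regime of shears - neither truly aligned nor strongly transverse - produces genuinely sub-optimal contributions; balancing the three regimes against one another is an optimisation over auxiliary exponents whose solution, after straightforward but tedious algebra, should yield precisely the exponent $\tau(\alpha)=3(2-\alpha)(\alpha-1)(\alpha+2)/(2(9\alpha^2+17\alpha-10))$, vanishing at both endpoints $\alpha\to 1^+$ and $\alpha\to 2^-$. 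Carrying out this optimisation rigorously, together with a careful treatment of the piecewise junctions of $\pa B$ (where curves of singularities appear in three dimensions, and where the plate-like shearlets are intrinsically mismatched), is the computation I would defer to the detailed argument in Section~\ref{sec:proof-theorem-1}.
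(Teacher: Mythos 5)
Your overall architecture --- reduce to a weak-$\ell^p$ count of $N_\eps=\cardwo{\setpropsmall{(j,k,m)}{\abssmall{\innerprods{f}{\psi_{j,k,m}}}>\eps}}$, split into coefficients away from $\pa B$ and coefficients meeting $\pa B$, and organize the latter by the alignment of the shear with the surface normal --- is the same as the paper's (Sections 6.2--9). The paper treats the smooth part by a weighted Bessel/Sobolev-embedding argument on sums of squares rather than by per-coefficient Taylor bounds, but that is a stylistic difference. The substantive problems lie in the edge analysis, where your quantitative claims are not the ones that make the count close. First, your decay for misaligned shearlets, a factor $(\abssmall{\Delta k}\,2^{-j(\scp-1)/2})^{-\dep}$, is decay in the \emph{angular} offset; since consecutive shears are separated by angle $2^{-j(\scp-1)/2}$, this factor is $O(1)$ until $\abssmall{\Delta k}\gtrsim 2^{j(\scp-1)/2}$, so essentially all $2^{j(\scp-1)}$ shears would survive any threshold and the count explodes. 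The correct estimate (Theorem~\ref{thm:decay-ca-smooth}) decays in the \emph{integer} offset $\abssmall{\hat k_i}=\abssmall{k_i+2^{j(\scp-1)/2}s_i}$, and with exponent $\scp+1$; crucially, that exponent does not come from integration by parts against $\hat\psi$ (which only controls the \emph{linearized} piece) but from a truncation/volume argument bounding the sliver between $\pa B$ and its tangent plane inside $\supp\psi_\lambda$, which is where the $C^\scp$ hypothesis on $E$ enters. Second, your count of $2^{j(2-\scp)/2}$ aligned shears per reference patch is too large: the tangent direction varies by $\lesssim 2^{-j(\scp-1)/2}$ over a $2^{-j/2}$-patch, i.e.\ by one shear step, so only $O(1)$ shears align per patch (with $\lesssim\abssmall{\hat k_1}+\abssmall{\hat k_2}+1$ translates each, as in \eqref{eq:counting}). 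With your count the aligned contribution alone gives $N_\eps\gtrsim\eps^{-2(4-\scp)/(\scp+2)}$, which already exceeds the claimed rate for every $\scp<2$ by far more than $\tau(\scp)$.

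The deeper gap is that you defer the derivation of $\tau(\scp)$ to ``tedious algebra'' over an ``intermediate regime of shears,'' but that is not where $\tau$ comes from. The loss is produced by surface patches whose normal is nearly orthogonal to the $\xi_1$-axis of the pyramid $\cP$ (the paper's Case~2b): for such patches \emph{no} admissible shear $\abs{k}\le\ceilsmall{2^{j(\scp-1)/2}}$ can align with the normal, the $\abssmall{\hat k}^{-(\scp+1)}$ estimate degenerates, and one must instead use the uniform bound $\abssmall{\innerprods{f}{\psi_\lambda}}\lesssim 2^{-j(\scp/2+1/4)\scp}$ (obtained from the vanishing moments of $\psi$ along $x_1$) together with the crude count of all $2^{3j(\scp-1)/2}$ shears per cube. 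Balancing this contribution, over its scale range $j\lesssim\frac{4}{(1+2\scp)\scp}\log_2\eps^{-1}$, against the aligned-patch count over $j\lesssim\frac{4}{\scp+2}\log_2\eps^{-1}$ is exactly the computation \eqref{eq:totalnumber} that yields $\eps^{-\frac{9\scp^2+17\scp-10}{(\scp+1)(\scp+2)(3\scp-1)}}$ and hence \eqref{eq:eps-of-alpha}. Without identifying this dichotomy --- and the separate estimates (ii) and (iii) of Theorem~\ref{thm:decay-ca-smooth} that it requires --- the proposal cannot produce the stated exponent, so the central quantitative content of the theorem is missing. (A minor point in your favour: for this theorem $B\in\mathit{STAR}^\scp(\cp)$ has a single $C^\scp$ patch, so your worry about junction curves belongs to Theorem~\ref{thm:3d-opt-sparse-piecewise}, not here.)
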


 We postpone the proof of Theorem~\ref{thm:3d-opt-sparse} until Section~\ref{sec:proof-theorem-1}. The sought optimal approximation error rate in~(\ref{eq:3d-approx-rate}) was $N^{-\alpha/2}$, hence
for $\alpha =2$ the obtained rate (\ref{eq:3d-approx-rate}) is almost optimal in the sense that it
is only a polylog factor $(\log N)^2$ away from the optimal rate. However, for $\alpha \in
\itvoo{1}{2}$ we are a power of $N$ with exponent $\tau$ away from the optimal rate. The exponent
$\tau$ is close to negligible; in particular, we have that $0<\tau(\alpha)<0.04$ for $\alpha \in
\itvoo{1}{2}$ and that $\tau(\alpha)\to 0$ for $\alpha \to 1+$ or $\alpha \to 2-$, see also
Figure~\ref{fig:optimality-gap}. The approximation error rate (\ref{eq:3d-approx-rate}) obtained for
$\alpha <2$ can also be expressed as
\[ 
\norm[L^2]{f-f_N}^2 =O(N^{-\frac{6 \alpha^3+7 \alpha^2-11 \alpha+6}{9
    \alpha^2+17 \alpha-10}}),
\]
which, of course, still is an $\tau=\tau(\alpha)$ exponent away from being optimal. Let us mention that a
slightly better estimate $\tau(\alpha)$ can be obtained satisfying $\tau(\alpha)<0.037$ for $\alpha
\in \itvoo{1}{2}$, but the expression becomes overly complicated; we can, however, with the current
proof of Theorem~\ref{thm:3d-opt-sparse} not make $\tau(\alpha)$ arbitrarily small. As $\alpha \to 2+$ we see that the exponent $-\alpha/2+\tau \to -1$, however, for $\alpha=\beta=2$ an additional log factor appears in the approximation error rate.
This jump in the error rate is a consequence of our proof technique, and it might be that a truly optimal decay rate depends continuously on the model parameters.

If the smoothness of the discontinuity surface $C^\alpha$ of a 3D cartoon-like image approaches $C^1$ smoothness, we loose so much directional information that we do not gain anything by using a directional representation system, and we might as well use a standard wavelet system, see Example~\ref{example:fourier-wavelets} and Figure~\ref{fig:optimality-gap}(a). However, as the discontinuity surface becomes smoother, that is, as $\alpha$ approaches $2$, we acquire enough directional information about the singularity for directional representation systems to become a better choice; exactly how one should adapt the directional representation system to the smoothness of the singular is seen from the the definition of our hybrid shearlet system.

The constants in the expressions in (\ref{eq:3d-approx-rate}) depend only on $\nu$ and $\mu$,
where $\nu$ is a bound of the homogeneous H\"older norm for the radius function $\rho \in
\dot{C}^\alpha$ associated with the discontinuity surface $\pa B$ and $\mu$ is the bound of the
H\"older norm of $f_1,f_2 \in C^\beta(\R^3)$ with $f = f_0 + \chi_B f_1$, see also
Definition~\ref{def:cartoon-3d}. We remark that these constants grow with $\nu$ and $\mu$ hence we
cannot allow $f = f_0 + \chi_B f_1$ with only $\norm[C^\beta]{f_i}<\infty$.

\begin{figure}[ht]
% \vspace*{-.5em}
\centering
\subfloat[Graph of $\frac{6 \alpha^3+7 \alpha^2-11 \alpha+6}{9
    \alpha^2+17 \alpha-10}$ and the optimal rate $\alpha/2$ (dashed) as a
function of $\alpha$. ]{%
%\psfragfig{./rate-and-opt-rate} 
\includegraphics{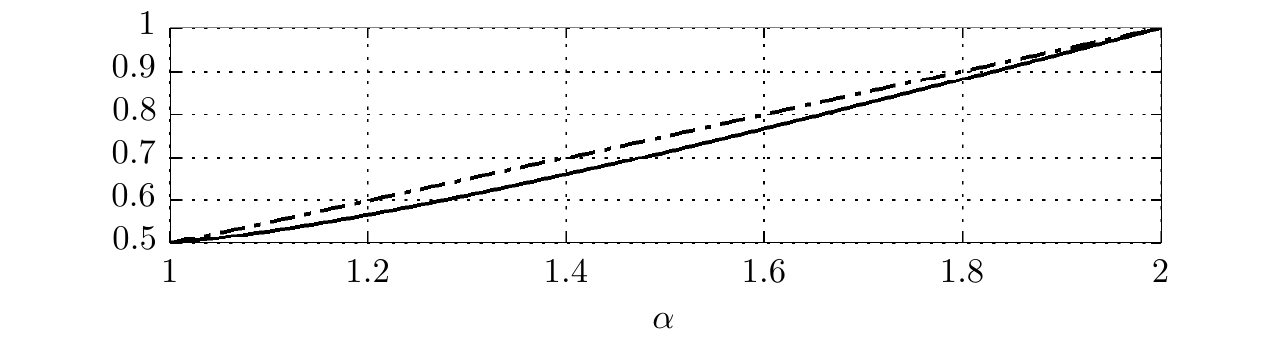}
\label{fig:rate-and-opt-rate}
}\\
\subfloat[Graph of $\tau(\alpha)$ given by \eqref{eq:eps-of-alpha}.]{%
\includegraphics{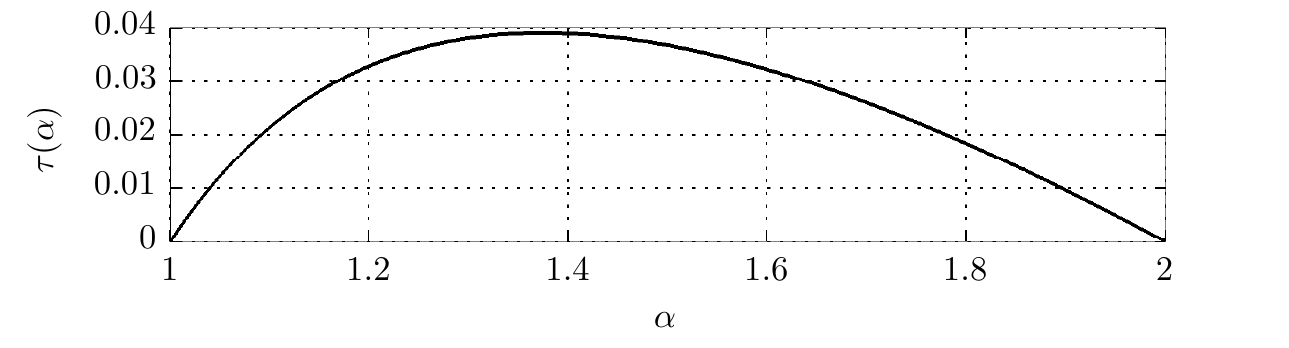}
%\psfragfig{./eps-from-opt} 
\label{fig:eps-from-opt}
}
\caption{The optimality gap for $\beta \in \itvco{\alpha}{2}$:
 Figure~\ref{fig:rate-and-opt-rate} shows the optimal and the obtained
 rate, and Figure~\ref{fig:eps-from-opt} their difference $\tau(\alpha)$.}
\label{fig:optimality-gap} 
\end{figure}

Let us also briefly discuss the two decay assumptions in the frequency domain on the shearlet
generators in Theorem~\ref{thm:3d-opt-sparse}. Condition~(i) says that $\psi$ is
$(\delta,\gamma)$-feasible and can be interpreted as both a condition ensuring almost separable
behavior and controlling the effective support of the shearlets in frequency domain as well
as a moment condition along the $x_1$ axis, hence enforcing directional selectivity. Condition~(ii),
together with (i), is a weak version of a directional vanishing moment condition (see \cite{DV05}
for a precise definition), which is crucial for having fast decay of the shearlet coefficients when
the corresponding shearlet intersects the discontinuity surface. We refer to the exposition
\cite{KLL11} for a detailed explanation of the necessity of conditions~(i) and (ii). Conditions~(i)
and (ii) are rather mild conditions on the generators; in particular, shearlets constructed by
Theorem~\ref{thm:compact-for-pyramid} and~\ref{thm:compact-completeframe}, with extra assumptions on
the parameters $K$ and $L$, will indeed satisfy~(i) and (ii) in Theorem~\ref{thm:3d-opt-sparse}. To
compare with the optimality result for band-limited generators we wish to point out that
conditions~(i) and (ii) are obviously satisfied for band-limited generators.

Theorem~1.3 in \cite{KL10} shows optimal sparse approximation of compactly supported shearlets in
2D. Theorem~\ref{thm:3d-opt-sparse} is similar in spirit to Theorem~1.3 in \cite{KL10}, but for the
three-dimensional setting. However, as opposed to the two-dimensional setting, anisotropic
structures in three-di\-men\-si\-onal data comprise of \emph{two} morphological different types of
structures, namely surfaces \emph{and} curves. It would therefore be desirable to have a similar
optimality result for our extended 3D image class $\cE_{\scp,L}^\beta(\R^3)$ which also allows types
of \emph{curve-like} singularities. Yet, the pyramid-adapted shearlets introduced in
Section~\ref{sec:pyram-adapt-shearl} are plate-like and thus, a priori, not well-suited for
capturing such one-dimensional singularities. However, these plate-like shearlet systems still
deliver the nearly optimal error rate as the following result shows. The proof of the result is postponed to Section~\ref{sec:proof-theorem-2}.

\begin{theorem}
\label{thm:3d-opt-sparse-piecewise}
Let $\scp \in\itvoc{1}{2}$, $c \in (\R_+)^2$, and let $\phi, \psi, \tilde{\psi}, \breve{\psi}
\in L^2(\R^3)$ be compactly supported. For each $\kappa \in [-1,1]$ and $x_3
\in \R$, define $g^0_{\kappa,\, x_3} \in L^2(\R^2)$ by
\[
g^0_{\kappa,\, x_3}(x_1,x_2) = \psi(x_1,x_2,\kappa x_2 + x_3),
\]
and, for each $\kappa \in [-1,1]$ and $x_2 \in \R$, define $g^1_{\kappa,\, x_2} \in L^2(\R^2)$ by
\[
g^1_{\kappa,\, x_2}(x_1,x_3) = \psi(x_1,\kappa x_3 + x_2,x_3).
\]
Suppose that, for all $\xi = (\xi_1,\xi_2,\xi_3) \in \R^3$, $\kappa \in [-1,1]$, and $x_2, x_3 \in \R$, the
function $\psi$ satisfies:
%for all $\xi =
 %  (\xi_1,\xi_2,\xi_3) \in \R^3$, the shearlet $\psi$ satisfies%\\[-1.25ex]
   \begin{romannum}
   \item $|\hat\psi(\xi)| \le C \cdot \min\{1,|\xi_1|^{\delta}\} \cdot
     \min\{1,|\xi_1|^{-\gamma}\} \cdot \min\{1,|\xi_2|^{-\gamma}\} \cdot
     \min\{1,|\xi_3|^{-\gamma}\}$, \vspace{0.2em}
   \item $ \abs{\bigl(\frac{\partial}{\partial
         \xi_2}\bigr)^{\ell}\hat g^0_{\kappa,\, x_3}(\xi_1,\xi_2)}
     \le |h(\xi_1)| \cdot
     \left(1+\frac{|\xi_2|}{|\xi_1|}\right)^{-\gamma} \quad \text{for}
     \,\, \ell =0,1,$  \vspace{0.2em}
   \item $\abs{\bigl(\frac{\partial}{\partial \xi_3}\bigr)^{\ell}\hat g^1_{\kappa,\,x_2}(\xi_1,\xi_3)}
     \le |h(\xi_1)| \cdot
     \left(1+\frac{|\xi_3|}{|\xi_1|}\right)^{-\gamma} \quad \text{for} \,\, \ell =0,1,$
   \end{romannum}
   where $\delta > 8$, $\gamma \ge 4$, $h \in L^1(\R)$, and $C$ a
   constant, and suppose that $\tilde{\psi}$ and $\breve{\psi}$
   satisfy analogous conditions with the obvious change of
   coordinates. Further, suppose that the shearlet system
   $\SH(\phi,\psi,\tilde{\psi},\breve{\psi};c,\scp)$ forms a frame for
   $L^2(\R^3)$.

   Let $\beta \in \itvcc{\alpha}{2}$. Then, for any $\cp > 0$, $L >0$, and $\mu
   >0$, the shearlet frame
   $\SH(\phi,\psi,\tilde{\psi},\breve{\psi};c,\scp)$ provides
   nearly optimally sparse approximations of functions $f \in
   \cE_{\scp,L}^\beta(\R^3)$  
in the sense that 
\begin{equation*}%\label{eq:3d-approx-rate}
  \norm[L^2]{f-f_N}^2 \lesssim
  \left\{\begin{aligned} 
      N^{-\scp/2+\tau}, &\qquad    \text{if }\beta \in \itvco{\alpha}{2},\\
      N^{-1}\,(\log{N})^2, &\qquad  \text{if }  \beta=\alpha=2,
    \end{aligned}
  \right\} 
  \text{ as $N \to \infty$,} 
\end{equation*}
and
\begin{equation*}
\abs{c^*_n} \lesssim 
  \left\{\begin{aligned} 
      n^{-\frac{\alpha+2}{4}+\frac{\tau}{2}}, &\qquad    \text{if }\beta \in \itvco{\alpha}{2},\\
      n^{-1} \, \log n, &\qquad  \text{if }  \beta=\alpha=2,
    \end{aligned}
  \right\} 
  \text{ as $n \to \infty$,}
\end{equation*}
where $\tau=\tau(\alpha)$ is given by \eqref{eq:eps-of-alpha}.
\end{theorem}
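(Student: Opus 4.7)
The strategy is to reduce Theorem~\ref{thm:3d-opt-sparse-piecewise} to Theorem~\ref{thm:3d-opt-sparse} by splitting off the additional lower-dimensional singularities that appear when the $C^\alpha$ patches $\partial B_1,\dots,\partial B_L$ are glued. Concretely, the non-smooth locus of $\partial B$ decomposes as $E\cup V$, where $E$ is a finite union of one-dimensional $C^\alpha$ curves (edges along which two patches meet) and $V$ is a finite set of points (where three or more patches meet, or where an edge itself is non-smooth). Choosing a fixed smooth partition of unity $\{\eta_0,\eta_E,\eta_V\}$ subordinate to a neighborhood structure of $\partial B\setminus(E\cup V)$, $E$, and $V$, I would decompose $f=\eta_0 f+\eta_E f+\eta_V f=:f_0+f_E+f_V$ and estimate the shearlet coefficients of each piece separately.

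The term $f_0$ satisfies the hypotheses of Theorem~\ref{thm:3d-opt-sparse}: its discontinuity surface is a single $C^\alpha$ piece (since $\eta_0$ vanishes near the junctions), and the resulting H\"older constants depend only on $\nu$, $\mu$ and $L$. Hence Theorem~\ref{thm:3d-opt-sparse} yields the claimed weak-$\ell^p$ decay for the coefficients of $f_0$. For $f_V$ a crude geometric count suffices: at scale $j$, only $O(L\cdot 2^{j(\alpha-1)})$ shearlet indices $(k,m)$ yield a shearlet whose support meets a fixed vertex (the factor $2^{j(\alpha-1)}$ being the total number of admissible shears), while each coefficient is bounded by $\|f\|_\infty\|\psi_{j,k,m}\|_{L^1}\lesssim 2^{-j(\alpha+2)/4}$. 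Summing over scales and arranging the resulting coefficients in decreasing order produces a tail that is strictly faster than the target rate in~\eqref{eq:3d-coeff-decay-rate}, so the vertex contribution can be absorbed into the constant.

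The essential new work is the edge term $f_E$. Near an arc of $E$, the discontinuity set is locally one-dimensional, and the plate-like shearlets are a priori not matched to such features. The plan is to parametrize a short arc by a $C^\alpha$ curve, pass to local coordinates in which the tangent is aligned with one of the three pyramid axes (after interchanging the roles of $\psi$, $\tilde{\psi}$, $\breve{\psi}$ if necessary), and then slice the coefficient integral along planes of the form $\{x_3=\kappa x_2+\mathrm{const}\}$ or $\{x_2=\kappa x_3+\mathrm{const}\}$. Hypotheses (ii) and (iii) enter precisely here: the two-dimensional traces $g^0_{\kappa,x_3}$ and $g^1_{\kappa,x_2}$ inherit the decay and first-derivative bounds with parameters $(\delta,\gamma)$ needed to run the 2D directional-vanishing-moment argument of~\cite{KL10} (which is also the engine driving Theorem~\ref{thm:3d-opt-sparse}). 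After this reduction, the edge analysis becomes a two-dimensional shearlet-versus-curve count, producing the same weak-$\ell^p$ bound as the surface contribution, with constants depending on the total length of $E$ and on $L$.

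Combining the three estimates via $|\langle f,\psi_{j,k,m}\rangle|^2\le 3\sum_{\ast\in\{0,E,V\}}|\langle f_\ast,\psi_{j,k,m}\rangle|^2$ (and the analogous inequalities for $\tilde\psi$, $\breve\psi$) gives the weak-$\ell^p$ decay of the rearranged shearlet coefficients of $f$, and Lemma~\ref{lemma:n-term-frame-approx} together with the computation in~\eqref{eq:from-coeff-decay-to-error-decay} converts this into the desired $L^2$ approximation estimate. I expect $f_E$ to be the main obstacle: because the shearlets are designed to resolve two-dimensional surface singularities, the conversion of hypotheses (ii)--(iii) into a sharp estimate for a truly one-dimensional discontinuity, together with the bookkeeping of which shears effectively align with the local tangent of each arc, is where the argument genuinely departs from the proof of Theorem~\ref{thm:3d-opt-sparse}.
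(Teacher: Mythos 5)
There is a genuine gap, and it lies in the very first step: the fixed (scale-independent) partition of unity $\{\eta_0,\eta_E,\eta_V\}$ does not isolate the lower-dimensional singularities. The functions $f_E=\eta_E f$ and $f_V=\eta_V f$ are supported on neighborhoods of $E$ and $V$ of \emph{fixed positive size}, and inside those neighborhoods $f$ still jumps across the two-dimensional patches $\partial B_1,\dots,\partial B_L$; the edge set $E$ is merely the crease where the surface fails to be $C^\alpha$, not the discontinuity set itself. Consequently your claim that ``near an arc of $E$ the discontinuity set is locally one-dimensional'' is false, and your counts for $f_E$ and $f_V$ drastically undercount the relevant shearlets: for $f_V$ you count only the $O(2^{j(\alpha-1)})$ indices whose support contains a fixed vertex, whereas $f_V$ has a surface discontinuity of fixed positive area in its support, which is met by $O(2^{j})$ dyadic cubes (hence vastly more indices) at scale $j$. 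The same objection applies to $f_E$. There is also a secondary issue with $f_0$: its discontinuity surface is a surface with boundary (it is cut off near the junctions), so it is not literally in $\cE^\beta_\alpha(\R^3)$ and Theorem~\ref{thm:3d-opt-sparse} does not apply verbatim; this is repairable, but the scale-independence of the cutoff is not.

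The decomposition has to be made \emph{per shearlet, at that shearlet's own scale}: one asks whether the support of $\psi_{j,k,m}$ (of diameter $\sim 2^{-j/2}$) meets one patch, two patches away from their common edge, two patches together with the edge, or a corner point, and only then do the edge and vertex contributions become genuinely lower-dimensional (at scale $j$ only $O(2^{j/2})$ cubes meet an edge and $O(1)$ meet a vertex). This is exactly what the paper does, adding Cases 2c--2f to the case analysis of Theorem~\ref{thm:3d-opt-sparse} and bounding $\#\Lambda(\eps)$ in each case; in the critical Case 2d the coefficient is split into a truncation term over a thin region $\cP$ hugging the edge curve plus linearized terms for the two surface patches on $\cP^c$. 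Your instinct about where hypotheses (ii)--(iii) enter is correct and matches the paper -- the slanted slices $\{x_3=\kappa x_2+\hat x_3\}$ and the traces $g^0_{\kappa,x_3}$, $g^1_{\kappa,x_2}$ are precisely the tool used to run the linearized estimate along an oblique edge -- but without replacing the fixed spatial cutoff by the scale-adapted case distinction the argument does not close.
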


We remark that there exist numerous examples of $\psi, \tilde{\psi}$,
and $\breve{\psi}$ satisfying the conditions (i) and (ii) in
Theorem~\ref{thm:3d-opt-sparse} and the conditions (i)-(iii) in
Theorem~\ref{thm:3d-opt-sparse-piecewise}. One large class of examples
are separable generators $\psi, \tilde{\psi}, \breve{\psi} \in
L^2(\R^3)$, \ie
\[\psi(x)=\eta(x_1) \varphi(x_2)\varphi(x_3), \quad
\tilde\psi(x)=\varphi(x_1) \eta(x_2)\varphi(x_3), \quad
\breve\psi(x)=\varphi(x_1) \varphi(x_2)\eta(x_3),\] where $\eta,
\varphi \in L^2(\R)$ are compactly supported functions satisfying:
\begin{romannum}
\item $|\hat\eta(\omega)| \le C_1 \cdot \min\{1,|\omega|^{\delta}\}
  \cdot \min\{1,|\omega|^{-\gamma}\}$, \vspace{0.2em}
\item $\abs{\bigl(\frac{\partial}{\partial \omega}\bigr)^{\ell}\hat
    \varphi(\omega)} \leq C_2 \cdot \min\{1,|\omega|^{-\gamma}\}
  \quad$ for $\ell=0,1$,
\end{romannum}
for $\omega \in \R$, where $\alpha > 8$, $\gamma \ge 4$, and $C_1,
C_2$ are constants. Then it is straightforward to check that the
shearlet $\psi$ satisfies the conditions (i)-(iii) in
Theorem~\ref{thm:3d-opt-sparse-piecewise} and $\tilde{\psi},
\breve{\psi}$ satisfy analogous conditions as required in
Theorem~\ref{thm:3d-opt-sparse-piecewise}. Thus, we have the following
result.

\begin{corollary}
  \label{thm:3d-opt-sparse-piecewise-separable}
  Let $\alpha \in \itvoc{1}{2}$, $c \in (\R_+)^2$, and let $\eta,
  \varphi \in L^2(\R)$ be compactly supported functions satisfying:
  \begin{romannum}
  \item $|\hat\eta(\omega)| \le C_1 \cdot \min\set{1,|\omega|^{\vap}}
    \cdot \min\set{1,|\omega|^{-\dep}}$,
  \item $\abs{\bigl(\frac{\partial}{\partial \omega}\bigr)^{\ell}\hat
      \varphi(\omega)} \leq C_2 \cdot \min\set{1,|\omega|^{-\dep}} \quad$
    for $\ell=0,1$,
  \end{romannum}
  for $\omega \in \R$, where $\vap > 8$, $\dep \ge 4$, and $C_1$ and
  $C_2$ are constants. Let $\phi \in L^2(\R^3)$ be compactly
  supported, and let $\psi, \tilde{\psi}, \breve{\psi} \in L^2(\R^3)$
  be defined by:
  \[
  \psi(x)=\eta(x_1) \varphi(x_2)\varphi(x_3), \quad
  \tilde\psi(x)=\varphi(x_1) \eta(x_2)\varphi(x_3), \quad
  \breve\psi(x)=\varphi(x_1) \varphi(x_2)\eta(x_3).
  \]
  Suppose that the shearlet system
  $\SH(\phi,\psi,\tilde{\psi},\breve{\psi};c,\scp)$ forms a frame for
  $L^2(\R^3)$.

  Let $\beta \in \itvcc{\alpha}{2}$. Then, for any $\cp > 0$, $L >0$,
  and $\mu >0$ , the shearlet frame
  $\SH(\phi,\psi,\tilde{\psi},\breve{\psi};c,\scp)$ provides nearly
  optimally sparse approximations of functions $f \in
  \cE_{\scp,L}^\beta(\R^3)$ in the sense that 
\begin{equation*}
  \norm[L^2]{f-f_N}^2 \lesssim
  \left\{\begin{aligned} 
      N^{-\scp/2+\tau}, &\qquad    \text{if }\beta \in \itvco{\alpha}{2},\\
      N^{-1}\,(\log{N})^2, &\qquad  \text{if }  \beta=\alpha=2,
    \end{aligned}
  \right\} 
  \text{ as $N \to \infty$,} 
\end{equation*}
and 
\begin{equation*}
\abs{c^*_n} \lesssim 
  \left\{\begin{aligned} 
      n^{-\frac{\alpha+2}{4}+\frac{\tau}{2}}, &\qquad    \text{if }\beta \in \itvco{\alpha}{2},\\
      n^{-1} \, \log n, &\qquad  \text{if }  \beta=\alpha=2,
    \end{aligned}
  \right\} 
  \text{ as $n \to \infty$,}
\end{equation*}
  where $\tau=\tau(\alpha)$ is given by \eqref{eq:eps-of-alpha}.
\end{corollary}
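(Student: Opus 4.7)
The plan is to verify that the separable generators $\psi$, $\tilde\psi$, and $\breve\psi$ from the statement satisfy hypotheses (i), (ii), and (iii) of Theorem~\ref{thm:3d-opt-sparse-piecewise}, whence the conclusion of the Corollary follows immediately. The separable tensor-product structure reduces every three-dimensional estimate to a one-dimensional estimate on $\hat\eta$ and $\hat\varphi$.

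Condition (i) is immediate: separability gives $\hat\psi(\xi)=\hat\eta(\xi_1)\hat\varphi(\xi_2)\hat\varphi(\xi_3)$, and multiplying the Corollary's hypothesis (i) on $\hat\eta$ with the $\ell=0$ instance of hypothesis (ii) on $\hat\varphi$ produces the four-factor bound; permuting the coordinates treats $\tilde\psi$ and $\breve\psi$. For condition (ii), factor
\[
g^0_{\kappa,x_3}(x_1,x_2)=\eta(x_1)\,H_{\kappa,x_3}(x_2), \qquad H_{\kappa,x_3}(x_2):=\varphi(x_2)\,\varphi(\kappa x_2+x_3),
\]
so that $\partial^\ell_{\xi_2}\hat g^0_{\kappa,x_3}(\xi_1,\xi_2)=\hat\eta(\xi_1)\,\partial^\ell_{\xi_2}\hat H_{\kappa,x_3}(\xi_2)$ for $\ell=0,1$. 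The heart of the argument is the one-dimensional uniform bound
\[
\bigl|\partial^\ell_{\xi_2}\hat H_{\kappa,x_3}(\xi_2)\bigr|\lesssim \min\bigl\{1,|\xi_2|^{-\gamma}\bigr\},
\]
valid for all $\kappa\in[-1,1]$ and $x_3\in\R$. For $\kappa=0$ the bound is trivial since $H_{0,x_3}(x_2)=\varphi(x_3)\varphi(x_2)$; the same trivialization applies to the derivative. For $\kappa\ne 0$, the change of variables $v=u/\kappa$ in the convolution identity for the Fourier transform of a product yields
\[
\hat H_{\kappa,x_3}(\xi_2)=\int_{\R}\hat\varphi(\xi_2-v\kappa)\,\hat\varphi(v)\,e^{2\pi i v x_3}\,dv,
\]
and splitting the integration domain at $|v|=|\xi_2|/(2|\kappa|)$ produces the required decay: on $\{|v|\le|\xi_2|/(2|\kappa|)\}$ one has $|\xi_2-v\kappa|\ge|\xi_2|/2$ and therefore $|\hat\varphi(\xi_2-v\kappa)|\lesssim|\xi_2|^{-\gamma}$, while on $\{|v|>|\xi_2|/(2|\kappa|)\}$ the substitution $w=v\kappa$ gives $|w|>|\xi_2|/2$ and hence $|w|^{-\gamma}\lesssim|\xi_2|^{-\gamma}$, with the resulting Jacobian factor $|\kappa|^{\gamma-1}\le 1$ (since $|\kappa|\le 1$ and $\gamma\ge 1$) staying harmless. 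The case $\ell=1$ follows from the same computation after replacing one $\varphi$ factor by $x\mapsto-2\pi i x\varphi(x)$, whose Fourier transform equals $\hat\varphi'$ and satisfies the same decay bound by hypothesis (ii).

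To pass from the one-dimensional estimate to the three-dimensional form demanded by Theorem~\ref{thm:3d-opt-sparse-piecewise}, set $h(\xi_1):=C|\hat\eta(\xi_1)|(1+|\xi_1|^{-\gamma'})$ with $\gamma':=\min\{\gamma,8\}\ge 4$. The choice of $\gamma'$ ensures $h\in L^1(\R)$: at infinity $|\hat\eta(\xi_1)|(1+|\xi_1|^{-\gamma'})\lesssim|\xi_1|^{-\gamma'}$ with $\gamma'>1$, and near zero $|\hat\eta(\xi_1)||\xi_1|^{-\gamma'}\lesssim|\xi_1|^{\delta-\gamma'}$ with $\delta-\gamma'>0$ since $\delta>8\ge\gamma'$. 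An elementary case analysis on $|\xi_2|\le|\xi_1|$ versus $|\xi_2|>|\xi_1|$ then converts the one-dimensional bound into $|\partial^\ell_{\xi_2}\hat g^0_{\kappa,x_3}(\xi_1,\xi_2)|\le|h(\xi_1)|(1+|\xi_2|/|\xi_1|)^{-\gamma'}$, as required. Condition (iii) is obtained by the symmetric argument swapping $\xi_2$ and $\xi_3$, and the analogous conditions for $\tilde\psi$ and $\breve\psi$ follow from the coordinate symmetry of their definitions. Theorem~\ref{thm:3d-opt-sparse-piecewise} then yields both the approximation and coefficient-decay rates claimed.

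The principal obstacle is attaining the uniformity in $\kappa\in[-1,1]$ in the decay bound on $\hat H_{\kappa,x_3}$: the naive Fourier-side convolution formula involves the factor $|\kappa|^{-1}\hat\varphi(\cdot/\kappa)$, which appears singular as $\kappa\to 0$, yet after the change of variables the singular prefactor cancels and only the harmless coefficient $|\kappa|^{\gamma-1}\le 1$ remains. The dyadic split of the integration domain at $|v|=|\xi_2|/(2|\kappa|)$ is what allows the $\kappa$-dependence to be absorbed into estimates valid uniformly in both $\kappa$ and $x_3$.
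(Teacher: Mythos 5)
Your proposal is correct and follows exactly the route the paper takes: the paper derives the corollary from Theorem~\ref{thm:3d-opt-sparse-piecewise} by remarking that it is ``straightforward to check'' that the separable generators satisfy conditions (i)--(iii), and you have simply carried out that check in detail. Your verification is sound -- in particular the convolution identity for $\hat H_{\kappa,x_3}$, the split at $|v|=|\xi_2|/(2|\kappa|)$ giving uniformity in $\kappa$, and the replacement of $\gamma$ by $\gamma'=\min\{\gamma,8\}$ to guarantee $h\in L^1(\R)$ are all exactly the points that need care -- so nothing further is required.
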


In the remaining sections of the paper we will prove
Theorem~\ref{thm:3d-opt-sparse} and
Theorem~\ref{thm:3d-opt-sparse-piecewise}.

\subsection{General Organization of the Proofs of Theorems~\ref{thm:3d-opt-sparse} and \ref{thm:3d-opt-sparse-piecewise}}
\label{sec:general-orga-proofs}
Fix $\alpha \in \itvoc{1}{2}$ and $c \in (\R_+)^2$, and take $B \in \mathit{STAR}^\alpha(\nu)$ and
$f = f_0 + \chi_B f_1 \in \cE_\scp^\beta(\R^3)$. Suppose
$\SH(\phi,\psi,\tilde{\psi},\breve{\psi};c,\scp)$ satisfies the hypotheses of
Theorem~\ref{thm:3d-opt-sparse}. Then by condition~(i) the generators $\psi,\tilde{\psi}$ and
$\breve{\psi}$ are absolute integrable in frequency domain hence continuous in time domain and
therefore of finite max-norm $\norm[L^\infty]{\cdot}$. Let $A$ denote the lower frame bound of
$\SH(\phi,\psi,\tilde{\psi},\breve{\psi};c,\scp)$.

Without loss of generality we can assume the scaling index $j$ to be sufficiently large. To see this
note that $\supp f \subset \itvcc{0}{1}^3$ and all elements in the shearlet frame
$\SH(\phi,\psi,\tilde{\psi},\breve{\psi};c,\scp)$ are compactly supported making the number of
nonzero coefficients below a fixed scale $j_0$ finite. Since we are aiming for an asymptotic
estimate, this finite number of coefficients can be neglected. This, in particular, means that we do
not need to consider frame elements from the low pass system $\Phi(\phi;c)$. Furthermore, it
suffices to consider shearlets $\Psi(\psi)=\{\psi_{j,k,m}\}$ associated with the pyramid $\cP$
since the frame elements $\tilde{\psi}_{j,k,m}$ and $\breve{\psi}_{j,k,m}$ can be handled
analogously.

To simplify notation, we denote our shearlet elements by $\psi_\lambda$, where $\lambda = (j,k,m)$ is
indexing scale, shear, and position. We let $\Lambda_j$ be the indexing sets of shearlets in
$\Psi(\psi)$ at scale $j$, \ie
\[
\Psi(\psi) = \{\psi_{\lambda} : \lambda \in \Lambda_j, j\ge 0\},
\]
and collect these indices cross scales as
\[
\Lambda = \bigcup_{j=0}^\infty \Lambda_j.
\]

Our main concern will be to derive appropriate estimates for the shearlet coefficients
$\setprop{\innerprod{f}{\psi_{\lambda}}}{\lambda \in \Lambda}$ of $f$. Let $c(f)^*_n$ denote the
$n$th largest shearlet coefficient $\innerprod{f}{\psi_{\lambda}}$ in absolute value. As mentioned
in Section~\ref{sec:sparsity}, to obtain the sought estimate on $\norm[L^2]{f-f_N}$ in
(\ref{eq:3d-approx-rate}), it suffices (by Lemma~\ref{lemma:n-term-frame-approx}) to show that the
$n$th largest shearlet coefficient $c(f)^*_{\,n}$ decays as specified by (\ref{eq:3d-coeff-decay-rate}).

To derive the estimate in (\ref{eq:3d-coeff-decay-rate}), we will study two separate cases.
The first case for shearlet elements $\psi_{\lambda}$ that do not interact with the discontinuity
surface, and the second case for those elements that do.
\begin{description}
\item[Case 1.] The compact support of the shearlet
  $\psi_{\lambda}$ does not intersect the boundary of the set $B$,
  \ie $\abs{\supp \psi_{\lambda} \cap \partial B} = 0$.
  % $\intt(\supp(\psi_{\lambda})) \cap \partial B = \emptyset.$
\item[Case 2.] The compact support of the shearlet
  $\psi_{\lambda}$ does intersect the boundary of the set $B$, \ie
  $\abs{\supp \psi_{\lambda} \cap \partial B} \neq 0 $.
  % $\intt(\supp(\psi_{\lambda})) \cap \partial B \ne \emptyset.$
\end{description}

For \emph{Case 1} we will not be concerned with decay estimates of single coefficients
$\innerprod{f}{\psi_\lambda}$, but with the decay of sums of coefficients over several scales and
all shears and translations. The frame property of the shearlet system, the Sobolev smoothness of
$f$ and a crude counting argument of the cardinal of the essential indices $\lambda$ will basically
be enough to provide the needed approximation rate. We refer to Section~\ref{sec:smoothpart} for the
exact procedure.

For \emph{Case 2} we need to estimate each coefficient $\innerprod{f}{\psi_\lambda}$ individually
and, in particular, how $\abs{\innerprod{f}{\psi_\lambda}}$ decays with scale $j$ and shearing $k$.
We assume, in the remainder of this section, that $f_0=0$ whereby $f = \chi_B f_1$. Depending on the orientation
of the discontinuity surface, we will split Case~2 into several subcases. The estimates in each
subcase will, however, follow the same principle: Let
\[M = \supp \psi_\lambda \cap B.
\] 
Further, let $H$ be an affine hyperplane that intersects $M$ and thereby divides $M$ into two sets $M_t$ and
$M_l$. We thereby have that
\[
\innerprod{f}{\psi_\lambda} = \innerprod{\chi_{M_t} f}{\psi_\lambda} + \innerprod{\chi_{M_l} f}{\psi_\lambda}.
\]  
The hyperplane will be chosen in such way that $\vol{ M_t}$ is sufficiently small. In particular,
$\vol{ M_t}$ should be small enough so that the following estimate
\[ \abs{\innerprod{\chi_{M_t}f}{\psi_\lambda}} \le \norm[L^\infty]{f}
\norm[L^\infty]{\psi_\lambda} \vol{ M_t} \le \mu \, 2^{j(\alpha+2)/4}
\vol{M_t} 
\] 
does not violate~(\ref{eq:3d-coeff-decay-rate}). We call estimates of this form, where we have
restricted the integration to a small part $M_t$ of $M$, \emph{truncated} estimates (or the truncation
term).

For the other term $\innerprod{\chi_{M_l} f}{\psi_\lambda}$ we will have to integrate over a
possibly much large part $M_l$ of $M$. To handle this we will use that $\psi_\lambda$ only interacts
with the discontinuity of $\chi_{M_l} f$ on a affine hyperplane inside $M$. This part of the
estimate is called the \emph{linearized} estimate (or the linearization term) since the
discontinuity surface in $\innerprod{\chi_{M_l} f}{\psi_\lambda}$ has been reduced to a linear
surface. In $\innerprod{\chi_{M_l} f}{\psi_\lambda}$ we are integrating over three variables, and we
will as the inner integration always choose to integrate along lines parallel to the ``singularity''
hyperplane $H$. The important point here is that along all these line integrals, the function $f$ is
$C^\beta$-smooth without discontinuities on the entire interval of integration. This is exactly the
reason for removing the $M_t$-part from $M$. Using the Fourier slice theorem we will then turn the
line integrations along $H$ in the spatial domain into two-dimensional plane integrations the
frequency domain. The argumentation is as follows: Consider $g:\R^3 \to \C$ compactly supported and
continuous, and let $p: \R^2\to \C$ be a projection of $g$ onto, say, the $x_2$ axis, \ie
$p(x_1,x_3)=\int_\R g(x_1,x_2,x_3)d x_2$. This immediately implies that $\hat p(\xi_1,\xi_3) = \hat
g(\xi_1,0,\xi_3)$ which is a simplified version of the Fourier slice theorem. By an inverse Fourier
transform, we then have
\begin{equation}
\int_\R g(x_1,x_2,x_3)d x_2 = p(x_1,x_3) = \int_{\R^2} \hat
g(\xi_1,0,\xi_3) \expo{2\pi i \innerprod{(x_1,x_3)}{(\xi_1,\xi_3)}}
\mathrm{d}\xi_1 \mathrm{d}\xi_3, \label{eq:fourier-slice-thm}
\end{equation}
and hence
\begin{equation}
 \int_\R  \abs{g(x_1,x_2,x_3)}d x_2 = \int_{\R^2} \abs{\hat
g(\xi_1,0,\xi_3)} \mathrm{d}\xi_1 \mathrm{d}\xi_3. \label{eq:fourier-slice-thm-2}
\end{equation}
The left-hand side of (\ref{eq:fourier-slice-thm-2}) corresponds to line integrations of $g$
parallel to the $x_1 x_3$ plane. By applying shearing to the coordinates $x \in \R^3$, we can
transform $H$ into a plane of the form $\setprop{x \in \R^3}{x_1=C_1, x_3=C_2}$, whereby we can
apply~(\ref{eq:fourier-slice-thm-2}) directly.

Finally, the decay assumptions on $\hat \psi$ in Theorem~\ref{thm:3d-opt-sparse} are then used to
derive decay estimates for $\abs{\innerprod{f}{\psi_\lambda}}$. Careful counting arguments will
enable us to arrive at the sought estimate in~(\ref{eq:3d-coeff-decay-rate}). We refer to
Section~\ref{sec:discontinuity} for a detailed description of Case~2.

With the sought estimates derived in Section~\ref{sec:smoothpart} and \ref{sec:discontinuity}, we
then prove Theorem~\ref{thm:3d-opt-sparse} in Section~\ref{sec:proof-theorem-1}. The proof of
Theorem~\ref{thm:3d-opt-sparse-piecewise} will follow the exact same organization and setup as
Theorem~\ref{thm:3d-opt-sparse}. Since the proofs are almost identical, in the proof of
Theorem~\ref{thm:3d-opt-sparse-piecewise}, we will only focus on issues that need to be handled
differently. The proof of Theorem~\ref{thm:3d-opt-sparse-piecewise} is presented in
Section~\ref{sec:proof-theorem-2}.

We end this section by fixing some notation used in the sequel. Since we are concerned with an
asymptotic estimate, we will often simply use $C$ as a constant although it might differ for each
estimate; sometimes we will simply drop the constant and use $\lesssim $
instead. 
 We will also use the notation $r_j \sim s_j$ for $r_j, s_j
\in \R$, if $C_1 \, r_j \le s_j \le C_2 \, r_j$ with constants
$C_1$ and $C_2$ independent on the scale $j$. 

\section{Analysis of shearlet coefficients away from the discontinuity surface}
\label{sec:smoothpart}

In this section we derive estimates for the decay rate of the shearlet coefficients
$\innerprod{f}{\psi_\lambda}$ for \emph{Case~1} described in the previous section. Hence, we
consider shearlets $\psi_\lambda$ whose support does not intersect the discontinuity surface $\pa
B$. This means that $f$ is $C^\beta$-smooth on the entire support of $\psi_\lambda$, and we can
therefore simply analyze shearlet coefficients $\innerprod{f}{\psi_\lambda}$ of functions $f \in
C^\beta(\R^3)$ with $\supp f \subset \itvcc{0}{1}^3$. The main result of this section,
Proposition~\ref{cor:main-smooth}, shows that $\norm[L^2]{f-f_N}^2=O(N^{-2\beta/3+\eps})$ as $N\to
\infty$ for any $\eps$, where $f_N$ is our $N$-term shearlet approximation. The result follows
easily from Proposition~\ref{prop:main-smooth} which is similar in spirit to
Proposition~\ref{cor:main-smooth}, but for the case where $f \in H^\beta$. The proof builds on
Lemma~\ref{lemma:smoothpart} which shows that the system $\Psi(\psi)$ forms a weighted Bessel-like
sequence with strong weights such as $(2^{\scp\beta j})_{j\ge 0}$ provided that the shearlet $\psi$
satisfies certain decay conditions. Lemma~\ref{lemma:smoothpart} is, in turn, proved by transferring
Sobolev differentiability of the target function to decay properties in the Fourier domain and
applying Lemma~\ref{proposition:upperbd}.
\begin{lemma}\label{lemma:smoothpart}
  Let $g \in H^\beta(\R^3)$ with $\supp g \subset
  \itvcc{0}{1}^3$. Suppose that $\psi \in L^2(\R^3)$ is $(\delta,\gamma)$-feasible for  $\vap >2\dep +
\beta$, $\dep >3$. 
 Then there exists a constant $B>0$ such that
\[
\sum_{j=0}^{\infty}\sum_{\abs{k} \le \lceil 2^{j(\scp-1)/2} \rceil}\sum_{m \in
  \Z^3} 2^{\scp\beta j}\abs{\innerprod{ g}{\psi_{j,k,m}}}^2 \le B
\normsmall[L^2]{\pa^{(\beta,0,0)} g}^2,
\]
where $\pa^{(\beta,0,0)} g$ denotes the $\beta$-fractional partial derivative of $g=g(x_1,x_2,x_3)$ with respect to
$x_1$.
\end{lemma}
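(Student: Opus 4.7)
The plan is to reduce this weighted Bessel-type inequality to an ordinary Bessel inequality for an auxiliary shearlet whose Fourier transform equals $|\xi_1|^{-\beta}\hat\psi(\xi)$, applied to the fractional derivative $\pa^{(\beta,0,0)}g$ instead of $g$. The weight $2^{\scp\beta j}$ arises naturally from the anisotropic scaling because on the effective frequency support of $\psi_{j,k,m}$ one has $|\xi_1|\sim 2^{j\scp/2}$.

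First I would rewrite the coefficients in the frequency domain via Plancherel. Writing $T=S_k A_{2^j}$ and using $|\det T|=2^{j(\scp+2)/2}$, a direct change of variables gives
\[
\hat{\psi}_{j,k,m}(\xi)=2^{-j(\scp+2)/4}\,\exp^{-2\pi i\innerprods{T^{-1}m}{\xi}}\,\hat{\psi}(T^{-T}\xi),
\]
and the key observation is that the first coordinate of $T^{-T}\xi$ is exactly $2^{-j\scp/2}\xi_1$. Therefore, if I define $\tilde\psi$ by $\hat{\tilde\psi}(\eta):=|\eta_1|^{-\beta}\hat\psi(\eta)$, the identity $\hat\psi(\eta)=|\eta_1|^\beta\hat{\tilde\psi}(\eta)$ pulls out precisely the factor $2^{-j\scp\beta/2}|\xi_1|^\beta$, yielding
\[
\hat\psi_{j,k,m}(\xi)=2^{-j\scp\beta/2}\,|\xi_1|^\beta\,\hat{\tilde\psi}_{j,k,m}(\xi).
\]
Setting $\tilde g:=\mathcal F^{-1}\bigl(|\cdot_1|^\beta\hat g\bigr)$, Plancherel gives both $2^{j\scp\beta/2}\innerprods{g}{\psi_{j,k,m}}=\innerprods{\tilde g}{\tilde\psi_{j,k,m}}$ and the norm relation $\norm[L^2]{\tilde g}=(2\pi)^{-\beta}\norm[L^2]{\pa^{(\beta,0,0)}g}$.

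Second I would verify that $\tilde\psi$ is still feasible in the sense of Definition~\ref{def:feasible}. The only concern is the singularity $|\eta_1|^{-\beta}$ at $\eta_1=0$, but this is absorbed by the $\min\setsmall{1,|q\eta_1|^\vap}$ factor in the assumed bound for $|\hat\psi|$: since $\vap>\beta$, a two-region case split shows
\[
|\eta_1|^{-\beta}\min\setsmall{1,|q\eta_1|^\vap}\lesssim\min\setsmall{1,|q\eta_1|^{\vap-\beta}},
\]
so $\tilde\psi$ is $(\vap-\beta,\dep)$-feasible. The hypotheses $\vap>2\dep+\beta$ and $\dep>3$ translate to $\vap-\beta>2\dep>6$, which are exactly the assumptions of Propositions~\ref{prop:Lsupfinite} and~\ref{prop:estimateR}. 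Arguing as in the proof of Corollary~\ref{proposition:upperbd} (only the upper frame bound part is needed), the shearlet system $\Psi(\tilde\psi)=\setsmall{\tilde\psi_{j,k,m}}$ is a Bessel sequence in $L^2(\R^3)$ with some constant $\tilde B<\infty$.

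Combining the two steps yields
\[
\sum_{j\ge 0}\sum_{|k|\le\ceilsmall{2^{j(\scp-1)/2}}}\sum_{m\in\Z^3}2^{\scp\beta j}\abs{\innerprods{g}{\psi_{j,k,m}}}^2=\sum_{j,k,m}\abs{\innerprods{\tilde g}{\tilde\psi_{j,k,m}}}^2\le\tilde B\,\norm[L^2]{\tilde g}^2\lesssim\norm[L^2]{\pa^{(\beta,0,0)}g}^2,
\]
which is the claim. The main obstacle will be the second step: to ensure that inserting the multiplier $|\xi_1|^{-\beta}$ preserves enough decay of the Fourier transform to still invoke the upper frame bound machinery of Section~5. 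The gap $\vap-2\dep>\beta$ in the hypothesis is exactly the one needed to absorb this multiplier, and no property of $g$ beyond Sobolev regularity in the $x_1$ direction enters the argument; in particular, the compact support of $g$ is never used and the lemma in fact holds for any $g\in H^\beta(\R^3)$ satisfying $\pa^{(\beta,0,0)}g\in L^2$.
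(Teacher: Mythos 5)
Your proposal is correct and is essentially the paper's own argument: the paper likewise factors out the weight by defining an auxiliary generator $\varphi$ with $(2\pi i\xi_1)^{\beta}\hat\varphi=\hat\psi$ (your $\hat{\tilde\psi}=|\xi_1|^{-\beta}\hat\psi$ differs only by a constant and a phase), uses the identity $2^{\scp\beta j}\abs{\innerprods{g}{\psi_{j,k,m}}}^2=\absbig{\innerprodbig{\pa^{(\beta,0,0)}g}{\varphi_{j,k,m}}}^2$, and then applies the Bessel bound of Corollary~\ref{proposition:upperbd} to the new feasible generator. Your explicit verification that the multiplier is absorbed into the $\min\setsmall{1,\abs{q\eta_1}^{\vap}}$ factor, giving $(\vap-\beta,\dep)$-feasibility, is exactly the ``straightforward computation'' the paper leaves to the reader.
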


\begin{proof}
  Since $\psi\in L^2(\R^3)$ is $(\delta,\gamma)$-feasible, we can choose $\varphi \in L^2(\R^3)$ as
\[
(2 \pi i \xi_1)^\beta \hat \varphi(\xi) = \hat \psi(\xi) \qquad \text{for }
\xi \in \R^3,
\]
hence $\psi$ is the $\pa^{(\beta,0,0)}$-fractional derivative of $\varphi$. This definition is well-defined due to the
decay assumptions on $\hat\psi$. By definition of the fractional derivative, it follows that
\begin{multline*}
  \absbig{\innerprodbig{\pa^{(\beta,0,0)}g}{\varphi_{j,k,m}}}^2 = \abs{\innerprod{(2\pi i
      \xi_1)^\beta \hat g(\xi)}{\widehat{\varphi_{j,k,m}}}}^2 \\ =
  \absbig{\innerprodbig{g}{\pa^{(\beta,0,0)}\varphi_{j,k,m}}}^2 = 2^{\scp \beta j}
  \abs{\innerprod{g}{\psi_{j,k,m}}}^2,
\end{multline*}
where we have used that $\pa^{(\beta,0,0)}f_{j,k,m} = (2^{j\scp /2})^\beta
(\pa^{(\beta,0,0)}f)_{j,k,m}$ for $f\in H^\beta(\R^3)$. A straightforward computation shows that
$\varphi$ satisfies the hypotheses of Lemma~\ref{proposition:upperbd}, and an application of
Lemma~\ref{proposition:upperbd} then yields
%\[
\begin{align*}
\sum_{j=0}^{\infty}\sum_{|k| \le \lceil 2^{j(\scp-1)/2} \rceil}\sum_{m \in \Z^3} 2^{\scp\beta j} |\langle
g,\psi_{j,k,m}\rangle|^2
&=\sum_{j=0}^{\infty}\sum_{|k| \le \lceil 2^{j(\scp-1)/2}
  \rceil}\sum_{m \in \Z^3} 
  \absbig{\innerprodbig{\partial^{(\beta,0,0)} g}{\varphi_{j,k,m}}}^2
\\
&\le B \normsmall[L^2]{ \partial^{(\beta,0,0)} g}^2,
%\]
\end{align*}
which completes the proof.
\end{proof}

We are now ready to prove the following result. % Proposition \ref{prop:main-smooth}. 
\begin{proposition}
\label{prop:main-smooth}
Let $g \in H^\beta(\R^3)$ with $\supp g \subset \itvcc{0}{1}^3$. Suppose that $\psi \in L^2(\R^3)$ is compactly
supported and $(\delta,\gamma)$-feasible for $\vap >2\dep + \beta$ and $\dep >3$.
Then 
\[
\sum_{n>N} \abs{c(g)^*_n}^2 \lesssim  N^{-2\beta/3} \qquad
\text{as} \quad  N \to \infty,
\]
where $c(g)^*_n$ is the $n$th largest coefficient $\innerprod{g}{\psi_\lambda}$ in modulus for $\psi_\lambda
\in \Psi(\psi)$.
\end{proposition}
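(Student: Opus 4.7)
The plan is to combine the weighted Bessel-type estimate of Lemma~\ref{lemma:smoothpart} with an elementary counting argument on the number of shearlets active at each scale to translate an $\ell^2$ bound with scale weights into the desired bound on the non-increasing rearrangement.

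First, set $\sigma_j := \sum_{|k| \le \ceilsmall{2^{j(\alpha-1)/2}}} \sum_{m \in \Z^3} \abs{\innerprod{g}{\psi_{j,k,m}}}^2$. Since $g \in H^\beta(\R^3)$ with compact support, and since $\psi$ satisfies the hypotheses of Lemma~\ref{lemma:smoothpart}, we obtain
\[
\sum_{j \ge 0} 2^{\alpha \beta j}\,\sigma_j \;\le\; B\, \normsmall[L^2]{\pa^{(\beta,0,0)} g}^2 \;\lesssim\; \norm[H^\beta]{g}^2.
\]
Consequently, for any cutoff $J \ge 0$,
\[
\sum_{j > J} \sigma_j \;\le\; 2^{-\alpha\beta J} \sum_{j > J} 2^{\alpha\beta j}\, \sigma_j \;\lesssim\; 2^{-\alpha\beta J}.
\]

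Next, I count the shearlet indices at scale $j$ which can produce a nonzero coefficient $\innerprod{g}{\psi_{j,k,m}}$. Because both $g$ and $\psi$ are compactly supported with $\supp g \subset \itvcc{0}{1}^3$, the index $m$ must satisfy $\supp \psi_{j,k,m} \cap \itvcc{0}{1}^3 \neq \emptyset$. After the change of variables $y = S_k A_{2^j} x$ this is a lattice-counting problem inside the parallelepiped $S_k A_{2^j} \itvcc{0}{1}^3$, whose volume equals $\abs{\dtm A_{2^j}} = 2^{j(\alpha+2)/2}$ independently of $k$; hence the number of admissible $m$ is $\lesssim 2^{j(\alpha+2)/2}$. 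The number of admissible shear parameters is $(2\ceilsmall{2^{j(\alpha-1)/2}}+1)^2 \lesssim 2^{j(\alpha-1)}$. Multiplying, the total number of indices at scale $j$ is $\lesssim 2^{j(\alpha-1)} \cdot 2^{j(\alpha+2)/2} = 2^{3\alpha j/2}$, so $N_J := \cardsmall{\setpropsmall{\lambda \in \varLambda}{j_\lambda \le J}} \lesssim 2^{3\alpha J/2}$.

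Finally, I choose the cutoff so that $N_J \le N$, namely $J = \floorsmall{\frac{2}{3\alpha}\log_2 N} + \BigO(1)$, so that $2^{3\alpha J/2} \asymp N$. Keeping all coefficients at scales $j \le J$ (and, if necessary, padding with arbitrary coefficients from finer scales to reach exactly $N$ terms) yields an $N$-term selection whose tail squared is $\le \sum_{j > J}\sigma_j \lesssim 2^{-\alpha\beta J} \asymp N^{-2\beta/3}$. Because the tail $\sum_{n > N}\abs{c(g)^*_n}^2$ is the minimal such tail over all $N$-term selections, we conclude
\[
\sum_{n > N} \abs{c(g)^*_n}^2 \;\lesssim\; N^{-2\beta/3},\qquad N \to \infty.
\]
The only delicate step is the count of active translations $m$ at scale $j$, since the shearing $S_k$ can have very large operator norm when $|k| \asymp 2^{j(\alpha-1)/2}$; the key observation that resolves this is simply that $\dtm S_k = 1$, so that the bounding parallelepiped keeps volume $2^{j(\alpha+2)/2}$ uniformly in $k$.
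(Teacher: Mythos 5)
Your proposal is correct and follows essentially the same route as the paper: both rest on the weighted Bessel bound of Lemma~\ref{lemma:smoothpart}, the count $N_J \sim 2^{3\alpha J/2}$ of active indices up to scale $J$, and the minimality of the rearranged tail over $N$-term selections. The only difference is cosmetic bookkeeping --- you pull the factor $2^{-\alpha\beta J}$ out of the tail directly, whereas the paper sums the weighted tails over all cutoffs $j_0$ and interchanges the order of summation --- and your remark on counting translates under large shears is sound since $|k| \le \ceilsmall{2^{j(\alpha-1)/2}}$ keeps the sheared box inside a box of comparable side lengths.
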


\begin{proof}
Set
\[
\tilde{\Lambda}_j = \{\lambda \in \Lambda_j : \supp \psi_{\lambda}
\cap \supp g \neq \emptyset \},\quad j > 0,
\]
\ie $\tilde{\Lambda}_j$ is the set of indices in $\Lambda_j$ associated with shearlets whose support intersects the
support of $g$. Then, for each scale $J>0$, we have
\begin{equation}
N_{J} = \Big|\bigcup_{j=0}^{J-1} \tilde{\Lambda}_j \Big| \sim
 \sum_{j=0}^{J-1}(2^{j(\scp-1)/2})^2\,  2^{j\scp /2} \, 2^{j/2}\, 2^{j/2} =2^{(3/2)\scp
J},\label{eq:crude-est-N-smooth-coeff}
\end{equation}
where the term $(2^{j(\scp-1)/2})^2$ is due to the number of shearing $\abs{k} = \abs{(k_1,k_2)} \in
2^{j(\scp-1)/2} $ at scale $j$ and the term $ 2^{j \scp /2} \,2^{j/2}\, 2^{j/2}$ is due to the
number of translation for which $g$ and $\psi_\lambda$ interact; recall that $\psi_\lambda$ has
support in a set of measure $2^{-j\scp /2} \cdot 2^{-j/2} \cdot 2^{-j/2}$.

We observe that there exists some $C > 0$ such that
\begin{align*}
\sum_{j_0 = 1}^{\infty}2^{\scp\beta j_0} \sum_{n>N_{j_0}} |c(g)^*_n|^2
& \leq  C \cdot \sum_{j_0=1}^{\infty}\sum_{j=j_0}^{\infty}\sum_{k,m} 2^{\scp\beta j_0} |\langle
g,\psi_{j,k,m}\rangle|^2\\
& =  C\cdot\sum_{j=1}^{\infty}\sum_{k,m}|\langle g,\psi_{j,k,m}\rangle|^2\biggl( \sum_{j_0=1}^{j}2^{\scp\beta
j_0}\biggr).
\end{align*}
By Lemma \ref{lemma:smoothpart}, this yields
\[
\sum_{j_0 = 1}^{\infty}2^{\scp\beta j_0} \sum_{n>N_{j_0}} |c(g)^*_n|^2
\leq C \cdot \sum_{j=1}^{\infty}\sum_{k,m}2^{\scp\beta j} |\langle g,\psi_{j,k,m}\rangle|^2 < \infty,
\]
and thus, by \eqref{eq:crude-est-N-smooth-coeff}, that
\[
\sum_{n>N_{j_0}} |c(g)^*_n|^2 \leq C \cdot 2^{-\scp \beta j_0}
  = C \cdot (2^{(3/2)\scp j_0})^{-2\beta /3} \leq C \cdot N_{j_0}^{\,-2\beta /3}.
\]
Finally, let $N>0$. Then there exists a positive integer $j_0>0$ such that 
\begin{equation*} %\label{eq:NJ}
N \sim N_{j_0} \sim 2^{(3/2)\scp j_0},
\end{equation*}
which completes the proof.
\end{proof}

We can get rid of the Sobolev space requirement in
Proposition~\ref{prop:main-smooth} if we accept a slightly worse decay
rate.
\begin{proposition}
  \label{cor:main-smooth}
  Let $f \in C^\beta(\R^3)$ with $\supp g \subset
  \itvcc{0}{1}^3$. Suppose that $\psi \in L^2(\R^3)$ is compactly
  supported and $(\delta,\gamma)$-feasible for $\vap >2\dep + \beta$ and $\dep >3$.
  Then 
  \[
  \sum_{n>N} \abs{c(g)^*_n}^2 \lesssim  N^{-2\beta/3+\eps} \qquad
  \text{as} \quad N \to \infty,  \]
  for any $\eps>0$.
\end{proposition}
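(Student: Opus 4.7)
The plan is to reduce the claim to Proposition~\ref{prop:main-smooth} via the standard embedding of compactly supported H\"older functions into Sobolev spaces of slightly smaller order. Concretely, for every $\beta' \in (0,\beta)$ one has
\[
C^\beta(\R^3) \cap \setprop{g}{\supp g \subset \itvcc{0}{1}^3} \;\hookrightarrow\; H^{\beta'}(\R^3),
\]
with an embedding constant depending only on $\beta$, $\beta'$, and the common compact support. This is a consequence of $C^\beta \subset B^{\beta}_{\infty,\infty}$ and the Besov embedding $B^{\beta}_{\infty,\infty}(\R^3) \hookrightarrow B^{\beta'}_{2,2}(\R^3) = H^{\beta'}(\R^3)$ for compactly supported distributions; alternatively it follows by writing $\norm[H^{\beta'}]{g}^2$ as a Fourier integral, splitting into $\abs{\xi}\le R$ and $\abs{\xi}>R$, and using the Sobolev-type decay $\abs{\hat g(\xi)}\lesssim \abs{\xi}^{-\beta}(\log\abs{\xi})^{?}$ that compactly supported H\"older regularity provides for non-integer $\beta$, together with a Littlewood--Paley argument for the integer case.

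Given any $\eps>0$, I would therefore pick $\beta' := \beta - \tfrac{3}{2}\eps$, so that $2\beta'/3 = 2\beta/3 - \eps$, and arrange (by decreasing $\eps$ if necessary) that $0<\beta'<\beta$. Then $g\in H^{\beta'}(\R^3)$ with $\norm[H^{\beta'}]{g}\le C(\beta,\eps)\,\norm[C^\beta]{g}$, and the hypotheses on $\psi$ transfer verbatim: indeed the assumption $\vap > 2\dep+\beta$ implies the weaker requirement $\vap > 2\dep+\beta'$ needed by Proposition~\ref{prop:main-smooth} at smoothness $\beta'$.

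Applying Proposition~\ref{prop:main-smooth} to $g$ with smoothness parameter $\beta'$ then yields
\[
\sum_{n>N}\abs{c(g)^*_n}^2 \;\lesssim\; N^{-2\beta'/3} \;=\; N^{-2\beta/3+\eps}\qquad \text{as } N\to\infty,
\]
which is precisely the conclusion. Since $\eps>0$ was arbitrary this completes the proof.

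The main obstacle is the embedding $C^\beta_c \hookrightarrow H^{\beta-\eps'}$; everything else is a bookkeeping check that the decay hypotheses on $\psi$ remain valid when $\beta$ is replaced by the slightly smaller $\beta'$. If one wants a self-contained treatment, the cleanest path is the Littlewood--Paley/Besov route, which avoids the subtleties around integer values of $\beta$ that arise when arguing directly via pointwise Fourier decay of H\"older functions.
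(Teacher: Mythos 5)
Your argument is correct and coincides with the paper's own proof: the paper likewise invokes the embedding $C^\beta_0(\R^3)\subset H^{\beta-\eps'}_0(\R^3)$ (citing the intrinsic characterization of fractional Sobolev spaces) and then applies Proposition~\ref{prop:main-smooth} at the slightly reduced smoothness. Your additional bookkeeping — choosing $\beta'=\beta-\tfrac{3}{2}\eps$ and verifying that $\vap>2\dep+\beta$ implies the hypothesis at level $\beta'$ — is exactly the right (implicit) check.
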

\begin{proof}
  By the intrinsic characterization of fractional order Sobolev spaces \cite{MR0450957}, we see that
  $C^\beta_0(\R^3) \subset H^{\beta-\eps}_0(\R^3)$ for any $\eps>0$. The result now follows from
  Proposition~\ref{prop:main-smooth}.
\end{proof}

\section[Analysis of coefficients on the discontinuity surface]{Analysis
  of shearlet coefficients associated with the discontinuity surface}
\label{sec:discontinuity}

We now turn our attention to \emph{Case~2}. Here we have to estimate those shearlet coefficients whose support
intersects the discontinuity surface. For any scale $j \ge 0$ and any grid
point $p \in \Z^3$, we let $\cQ_{j,p}$ denote the dyadic cube defined
by
\[
\cQ_{j,p} = [-2^{-j/2},2^{-j/2}]^3+2^{-j/2}2p.
\]
We let $\cQ_j$ be the collection of those dyadic cubes $\cQ_{j,p}$ at scale $j$ whose interior $\intt(\cQ_{j,p})$
intersects $\partial B$, \ie
\[
\cQ_j = \{\cQ_{j,p}: \intt(\cQ_{j,p}) \cap \partial B \ne \emptyset, p \in \Z^3\}.
\]
Of interest to us are not only the dyadic cubes, but also the shearlet indices associated with shearlets
intersecting the discontinuity surface inside some $\cQ_{j,p} \in \cQ_{j}$, \ie for $j\ge0$ and $p \in \Z^3$ with
$\cQ_{j,p} \in \cQ_{j}$, we will consider the index set
\[ {\Lambda_{j,p}} = \{\lambda \in \Lambda_j : \intt(\supp
\psi_{\lambda}) \cap \intt(\cQ_{j,p}) \cap \partial B \ne \emptyset\}.
\]
Further, for $j \ge 0$, $p \in \Z^3$, and $0 < \eps < 1$, we define $\Lambda_{j,p}(\eps)$ to be the index set of
shearlets $\psi_\lambda$, $\lambda \in {\Lambda_{j,p}}$, such that the magnitude of the corresponding shearlet
coefficient $\langle f,\psi_\lambda \rangle$ is larger than $\eps$ and the support of $\psi_\lambda$ intersects
$\cQ_{j,p}$ at the $j$th scale, \ie
\[ 
{\Lambda_{j,p}(\eps)} = \{\lambda \in {\Lambda_{j,p}} : |\langle
f,\psi_{\lambda}\rangle| > \eps\}.
\]
The collection of such shearlet indices across scales and translates will be denoted by {$\Lambda(\eps)$}, \ie
\[
\Lambda(\eps) = \bigcup_{j,p} \Lambda_{j,p}(\eps).
\]

As mentioned in Section~\ref{sec:general-orga-proofs}, we may assume that $j$ is sufficiently large. Suppose $\cQ_{j,p}
\in \cQ_j$ for some given scale $j \ge 0$ and position $p \in \Z^3$. Then the set
\[
\cS_{j,p}= \bigcup_{\lambda \in \Lambda_{j,p}} \supp \psi_\lambda
\]
is contained in a cube of size $C\cdot 2^{-j/2}$ by $C\cdot 2^{-j/2}$
by $C\cdot 2^{-j/2}$ and is, thereby, asymptotically of the same size as
$\cQ_{j,p}$.

We now restrict ourselves to considering $B \in \mathit{STAR}^\alpha(\cp)$; the piecewise case $B \in \mathit{STAR}^\alpha(\cp,L)$ will be
dealt with in Section~\ref{sec:proof-theorem-2}. By smoothness assumption on the discontinuity surface $\partial B$, the
discontinuity surface can locally be parametrized by either $(x_1,x_2,E(x_1,x_2))$, $(x_1,E(x_1,x_3),x_3)$, or
$(E(x_2,x_3),x_2,x_3)$ with $E \in C^\alpha$ in the interior of $\cS_{j,p}$ for sufficiently large $j$. In other words,
the part of the discontinuity surface $\partial B$ contained in $\cS_{j,p}$ can be described as the graph $x_3 =
E(x_1,x_2)$, $x_2=E(x_1,x_3)$, or $x_1=E(x_2,x_3)$ of a $C^\alpha$ function.

Thus, we are facing the following two
cases: 
\begin{description}
\item[Case 2a.] The discontinuity surface $\partial B$ can be parametrized by $(E(x_2,x_3),x_2,x_3)$ with $E \in C^\alpha$
  in the interior of $\cS_{j,p}$ such that, for any $\lambda \in \Lambda_{j,p}$, we have
     \begin{align*}
    |\pa^{(1,0)}E(\hat{x}_2,\hat{x}_3)| < + \infty \quad \text{and}
    \quad |\pa^{(0,1)}E(\hat{x}_2,\hat{x}_3)|
    < +\infty ,
  \end{align*}
for all $\hat{x} =
  (\hat{x}_1,\hat{x}_2, \hat{x}_3) \in \intt(\cQ_{j,p}) \cap
  \intt(\supp \psi_{\lambda} ) \cap \partial B$.
\item[Case 2b.] The discontinuity
 surface $\partial B$ can be
  parametrized by %either
 $(x_1,x_2,E(x_1,x_2))$ or
  $(x_1,E(x_1,x_3),x_3)$  with $E \in
  C^\alpha$ in the interior of $\cS_{j,p}$ such that, for any $\lambda
  \in \Lambda_{j,p}$, there exists some $\hat{x} =
  (\hat{x}_1,\hat{x}_2,\hat{x}_3) \in \intt(\cQ_{j,p}) \cap
  \intt(\supp \psi_{\lambda} ) \cap \partial B$ satisfying
     \begin{align*}
    \pa^{(1,0)}E(\hat{x}_1,\hat{x}_2) =0 \quad \text{or} \quad 
    \pa^{(1,0)}E(\hat{x}_1,\hat{x}_3) =0.
  \end{align*}
\end{description}

\subsection{Hyperplane discontinuity}
\label{sec:hyperpl-disc}

As described in Section~\ref{sec:general-orga-proofs}, the linearized estimates of the shearlet
coefficients will be one of the key estimates in proving Theorem~\ref{thm:3d-opt-sparse}. Linearized
estimates are used in the slightly simplified situation, where the discontinuity surface is linear.
Since such an estimate is interesting in it own right, we state and prove a linearized estimation
result below. Moreover, we will use the methods developed in the proof %of the following result
repeatedly in the remaining sections of the paper. In the proof, we will see that the shearing
operation is indeed very effective when analyzing hyperplane singularities.

\begin{theorem}\label{thm:decay-hyperplane}
  Let $\psi \in L^2(\R^3)$ be compactly supported, and assume that $\psi$ satisfies conditions~(i) and (ii) of
  Theorem~\ref{thm:3d-opt-sparse}. Further, let $\lambda \in \Lambda_{j,p}$ for $j \ge 0$ and $p \in \Z^3$. Suppose that
  $f \in \cE_\alpha^\beta(\R^3)$ for $1<\alpha\le \beta \le 2$ and that %and $\nu,\mu >0$.
  $\pa B$ is linear on the support of $\psi_\lambda$ in the sense that
\[ \supp \psi_\lambda \cap \pa B \subset H \]
for some affine hyperplane $ H$ of $\R^3$. Then,
\begin{romannum}
\item if $H$ has normal vector  $(-1,s_1,s_2)$ with $s_1 \le 3$
  and $s_2 \le 3$, 
\begin{equation}\label{eq:hp-estimate1}
|\langle f,\psi_{\lambda}\rangle| \leq C \cdot \min_{i=1,2} \left\{
\frac{2^{-j(\alpha/4+1/2)}}{|k_i+2^{j(\alpha-1)/2}s_i|^{3}}\right\}, 
\end{equation}
for some constant $C>0$.
\item if $H$ has normal vector  $(-1,s_1,s_2)$ with $s_1 \ge 3/2$
  or $s_2 \ge 3/2$, 
 \begin{equation}\label{eq:hp-estimate2}
|\langle f,\psi_{\lambda}\rangle| \leq C \cdot 2^{-j(\alpha/4+1/2+\alpha\beta/2)}
 \end{equation}
for some constant $C>0$.
\item if $H$ has normal vector  $(0,s_1,s_2)$ with $s_1,s_2 \in \R$,
then (\ref{eq:hp-estimate2}) holds.
\end{romannum}
\end{theorem}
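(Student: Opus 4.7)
The plan is to reduce all three cases to a unified framework by the change of variables $z = S_k A_{2^j} x - m$, which transforms
\[
\langle f, \psi_\lambda \rangle = 2^{-j(\alpha+2)/4} \int \tilde f(z)\,\overline{\psi(z)}\,\mathrm{d}z,
\]
with $\tilde f(z) := f(A_{2^j}^{-1}S_k^{-1}(z+m))$ supported in $[0,1]^3$ and $\psi$ fixed. The prefactor $2^{-j(\alpha+2)/4} = 2^{-j(\alpha/4+1/2)}$ is precisely the base decay appearing in all three bounds. A direct computation shows that $H$ is transported to a hyperplane $H'$ with equation $-z_1 + \tilde k_1 z_2 + \tilde k_2 z_3 = \tilde c$ in cases~(i)--(ii), where $\tilde k_i := k_i + 2^{j(\alpha-1)/2}s_i$ is the \emph{effective shear}, and $s_1 z_2 + s_2 z_3 = \tilde c$ (independent of $z_1$) in case~(iii). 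Writing $f = f_0 + f_1\chi_B$ with $\chi_B|_{\supp \psi_\lambda} = \chi_{H^+}$, the smooth contribution $\langle f_0,\psi_\lambda\rangle$ is bounded by $2^{-j(\alpha/4+1/2+\alpha\beta/2)}$ via a Taylor expansion in $z_1$ against the $z_1$-vanishing moments of $\psi$ (from condition~(i) with $\delta > 8 > \beta$), exploiting the chain-rule identity $\pa_{z_1}\tilde f_0 = 2^{-j\alpha/2}\pa_{x_1}f_0$. It then remains to estimate the jump contribution $\langle f_1\chi_{H^+},\psi_\lambda\rangle$ in each case.

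For case~(iii), since $H'$ does not involve $z_1$, the cutoff $\chi_{H'^+}$ is constant in $z_1$ on each $(z_2,z_3)$-slice, so $\tilde f_1\chi_{H'^+}$ is $C^\beta$ in $z_1$ with seminorm $O(2^{-j\alpha\beta/2})$. Integrating in $z_1$ first and expanding $\tilde f_1$ in a Taylor series of order $\lfloor\beta\rfloor$ in $z_1$ against the $z_1$-vanishing moments of $\psi$ immediately yields~\eqref{eq:hp-estimate2}.

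For case~(i), I apply the secondary shear $w = Tz$ with $w_1 = -z_1 + \tilde k_1 z_2 + \tilde k_2 z_3$, $w_i = z_i$ for $i=2,3$, which straightens $H'$ to $\{w_1 = \tilde c\}$ and transports $\psi$ to the sheared generator $\Psi_{\tilde k}(w) = \psi(-w_1 + \tilde k_1 w_2 + \tilde k_2 w_3, w_2, w_3)$. Fixing $w_1$ and integrating in $(w_2, w_3)$, the Fourier slice identity~\eqref{eq:fourier-slice-thm-2} converts the inner integral into a two-dimensional Fourier integral involving $\widehat{\tilde f_1\,\Psi_{\tilde k}}$ along the slice $\eta_2 = \eta_3 = 0$. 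Rewriting the decay $|\hat\psi(\xi)|\lesssim\min\{1,|\xi_i|^{-\gamma}\}$ for $i=2,3$ in the frequency variables adapted to the secondary shear (which forces the arguments to contain the combinations $\eta_2 - \tilde k_1\eta_1$ and $\eta_3 - \tilde k_2\eta_1$ times $2^{-j/2}$) extracts the factor $|\tilde k_i|^{-3}$ by consuming three of the four available powers of $\gamma\ge 4$; the fourth power secures absolute integrability of the remaining one-dimensional integral. This gives~\eqref{eq:hp-estimate1}.

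For case~(ii), the hypothesis $|s_i|\ge 3/2$ together with $|k_i|\le\lceil 2^{j(\alpha-1)/2}\rceil$ forces $|\tilde k_i|\gtrsim 2^{j(\alpha-1)/2}$ in at least one coordinate, so inserting this into~\eqref{eq:hp-estimate1} already yields decay $\lesssim 2^{-j((\alpha+2)/4+3(\alpha-1)/2)}$, matching~\eqref{eq:hp-estimate2} for $\beta\le 3(\alpha-1)/\alpha$. For the remaining range $\beta > 3(\alpha-1)/\alpha$, I would combine the secondary-shear argument with a Taylor expansion of $\tilde f_1\circ T^{-1}$ in $w_1$, exploiting $\|\tilde f_1\circ T^{-1}\|_{\dot C^\beta_{w_1}} = O(2^{-j\alpha\beta/2})$ inherited from $f_1\in C^\beta(\R^3)$ via $\pa_{w_1}(\tilde f_1\circ T^{-1}) = 2^{-j\alpha/2}\pa_{x_1}f_1$, to produce the extra smoothness factor. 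The main obstacle I expect is the careful bookkeeping in case~(i): balancing the frequency concentration of $\chi_{H'^+}$ (which lives on a ray through the origin) against the highly anisotropic spectrum of $\Psi_{\tilde k}$, so as to extract \emph{exactly} three powers of $|\tilde k_i|^{-1}$, will require splitting the Fourier-domain integration into overlapping and non-overlapping subregions and applying the decay bounds judiciously in each.
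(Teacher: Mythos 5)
Your setup is sound and several pieces match the paper's argument: the effective shear $\tilde k_i = k_i + 2^{j(\alpha-1)/2}s_i$ is exactly the paper's $\hat k_i$, the prefactor $2^{-j(\alpha+2)/4}$ is correctly identified, and your treatment of case~(iii) (the cutoff is constant in $z_1$, so a Taylor expansion in $z_1$ against the vanishing moments $\int z_1^\ell\psi\,dz_1=0$, $\ell=0,1$, with $\pa_{z_1}\tilde f = 2^{-j\alpha/2}\pa_{x_1}f$, gives $2^{-j\alpha\beta/2}$) is precisely what the paper does. However, there is a genuine gap in case~(i). You apply the projection--slice identity to the \emph{product} $\tilde f_1\,\Psi_{\tilde k}$ and then invoke the decay of $\hat\psi$ to extract $|\tilde k_i|^{-3}$. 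But the relevant Fourier object is the convolution $\widehat{\tilde f_1\circ T^{-1}}\ast\hat\Psi_{\tilde k}$ restricted to $\eta_2=\eta_3=0$, and conditions (i)--(ii) of Theorem~\ref{thm:3d-opt-sparse} control only $\hat\psi$ and its first partials in $\xi_2,\xi_3$ --- they say nothing about this convolution, and $f_1$ is merely $C^\beta$ with $\beta\le 2$, so you cannot simply ``consume powers of $\gamma$'' after multiplying by $\tilde f_1$. The paper avoids this by Taylor-expanding $f$ \emph{in physical space first}, in a direction parallel to the singularity plane along which the shearlet support is \emph{short} (length $\sim 2^{-j/2}/|\hat k_1|$ in the original coordinates), so that only the pure moments $\int\psi\,dx_2$ and $\int x_2\psi\,dx_2$ need to be estimated via the Fourier slice theorem --- and these are exactly what conditions (i)--(ii) control. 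Your sketch omits any Taylor expansion of $f_1$ in case~(i), so the hypotheses are never actually brought to bear on the term that carries $f_1$.

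Case~(ii) has two further problems. First, the range split is vacuous: your ``easy'' range $\beta\le 3(\alpha-1)/\alpha$ is incompatible with $\beta\ge\alpha$ (since $\alpha^2-3\alpha+3>0$ for all $\alpha$), so the entire burden falls on the second argument. Second, that argument --- Taylor-expanding $\tilde f_1\circ T^{-1}$ in $w_1$ --- expands in the direction \emph{transverse} to the straightened singularity plane $\{w_1=\tilde c\}$, which after the secondary shear is the direction in which $\supp\Psi_{\tilde k}$ has length $\sim 1+|\tilde k_1|+|\tilde k_2|\sim 2^{j(\alpha-1)/2}|s|$. The Taylor remainder is then of size $2^{-j\alpha\beta/2}\cdot(|\tilde k_1|+|\tilde k_2|)^\beta\sim 2^{-j\beta/2}|s|^\beta$, which is far larger than the target $2^{-j\alpha\beta/2}$ for $\alpha>1$; moreover the jump $\chi_{\{w_1>\tilde c\}}$ destroys the vanishing moments in $w_1$ that the polynomial part would need. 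The paper instead keeps the Taylor direction parallel to the singularity plane, so that the remainder term $I_3$ picks up \emph{both} the factor $|\hat k_i|^{-\beta}$ and the factor $2^{-j\beta/2}$, and the identity $|\hat k_i|\gtrsim|s_i|\,2^{j(\alpha-1)/2}$ then converts $2^{-j\beta/2}|\hat k_i|^{-\beta}(1+|s_i|)^\beta$ into $2^{-j\alpha\beta/2}$. Without that choice of expansion direction, estimate~\eqref{eq:hp-estimate2} is not reachable by your scheme.
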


\begin{proof}
  Let us fix $(j,k,m) \in \Lambda_{j,p}$ and $f \in \cE_\alpha^\beta(\R^3)$. We can without loss of
  generality assume that $f$ is only nonzero on $B$. We first consider the cases~(i) and (ii). The
  hyperplane can be written as
  \[ H = \setprop{x \in \R^3}{\innerprod{x-x_0}{(-1,s_1,s_2)}=0}\] for some $x_0 \in \R^3$. We shear the hyperplane by
  $S_{-s}$ for $s=(s_1,s_2)$ and obtain
\begin{align*}
  S_{-s}H &= \setprop{x \in \R^3}{\innerprod{S_s
      x-x_0}{(-1,s_1,s_2)}=0}  \\
&= \setprop{x \in \R^3}{\innerprod{
    x-S_{-s}x_0}{(S_s)^T(-1,s_1,s_2)}=0}  \\
&= \setprop{x \in \R^3}{\innerprod{ x-S_{-s}x_0}{(-1,0,0)}=0} \\
& =\setprop{x=(x_1,x_2,x_3) \in \R^3}{x_1=\hat x_1}, \quad \text{where $\hat x=S_{-s}x_0$,}  
\end{align*}
which is a hyperplane parallel to the $x_2 x_3$ plane. Here the power of shearlets comes into play
since it will allow us to only consider hyperplane singularities parallel to the $x_2 x_3$ plane.
Of course, this requires that we also modify the shear parameter of the shearlet, that is, we will
consider the right hand side of
\[{\innerprod{f}{\psi_{j,k,m}}}=
{\innerprods{f(S_s\cdot)}{\psi_{j,\hat{k},m}}} 
\] 
with the new shear parameter $\hat k$ defined by $\hat{k}_1= k_1 + 2^{j(\alpha-1)/2}s_1$ and
$\hat{k}_2= k_2 + 2^{j(\alpha-1)/2}s_2$. The integrand in
$\innerprods{f(S_s\cdot)}{\psi_{j,\hat{k},m}}$ has the singularity plane exactly located on $x_1 =
\hat x_1$, \ie on $S_{-s}H$.

To simplify the expression for the integration bounds, we will fix a new origin on $S_{-s}H$, that
is, on $x_1=\hat x_1$; the $x_2$ and $x_3$ coordinate of the new origin will be fixed in the next
paragraph. Since $f$ is assumed to be only nonzero on $B$, the function $f$ will be equal to zero on
one side of $S_{-s}H$, say, $x_1 < \hat x_1$. It therefore suffices to estimate
\begin{align*} %\label{eq:hp-integration-claim2}
    {\innerprods{f_0(S_s\cdot)\chi_{\Omega}}{\psi_{j,\hat{k},m}}}
\end{align*}
for $f_0 \in C^\beta(\R^3)$ and $\Omega = \R_+ \times \R^2$. We first consider the case
$\abssmall{\hat k_1}\le \abssmall{\hat k_2}$. We further assume that $\hat k_1 <0$ and $\hat k_2
<0$. The other cases can be handled similarly.

Since $\psi$ is compactly supported, there exists some $L > 0$ such that $\supp{\psi} \subset
\itvcc{-L}{L}^3$. By a rescaling argument, we can assume $L=1$. Let
  \begin{align} \label{eq:cPjk}
    \cP_{j,k} := \setprop{ x \in \R^3 }{ \abssmall{2^{j\alpha/2}x_1 + 2^{j/2}\hat k_1 x_2 +
        2^{j/2}\hat k_2 x_3} \le 1,\abs{x_2},\abs{x_3} \leq 2^{-j/2} },
  \end{align}
  With this notation, we have $\supp \psi_{j,k,0} \subset \cP_{j,k}$. We say that the shearlet normal
  direction of the shearlet box $\cP_{j,0}$ is $(1,0,0)$, thus the shearlet normal of a sheared
  element $\psi_{j,k,m}$ associated with $\cP_{j,k}$ is
  $(1,k_1/2^{j(\alpha-1)/2},k_2/2^{j(\alpha-1)/2})$. Now, we fix our origin so that, relative to
  this new origin, it holds that
  \begin{align*}
    \supp(\psi_{j,\hat k,m})  \subset \cP_{j,\hat k} +
    (2^{-j\alpha/2},0,0) =: \tilde{\cP}_{j,\hat k}.
    %\label{eq:hp-supp}
  \end{align*}
Then one face of $\tilde{\cP}_{j,\hat k}$ intersects the origin. 

For a fixed $\abs{\hat x_3} \le 2^{-j/2}$, we consider the cross section of the parallelepiped
$\tilde{\cP}_{j,\hat k}$ on the hyperplane $x_3 =\hat x_3$. This cross section will be a parallelogram
with sides $x_2 = \pm 2^{-j/2}$,
  \begin{align*}
    2^{j\alpha/2}x_1 + 2^{j/2}\hat k_1 x_2 + 2^{j/2}\hat k_2 x_3 = 0, \;
    \text{ and } \;
    2^{j\alpha/2}x_1 + 2^{j/2}\hat k_1 x_2 + 2^{j/2}\hat k_2 x_3 = 2.
  \end{align*}
  As it is only a matter of scaling we replace the right hand side of the last equation with $1$ for
  simplicity. Solving the two last equalities for $x_2$ gives the following lines on the hyperplane
  $x_3=\hat x_3$:
   \begin{align*}
    L_1: \; x_2=- \frac{2^{j(\alpha-1)/2}}{\hat k_1} x_1 -
    \frac{\hat k_2}{\hat k_1} x_3,
    \;\; \text{ and } \;\; 
    L_2: \; x_2=- \frac{2^{j(\alpha-1)/2}}{\hat k_1} x_1 -
    \frac{\hat k_2}{\hat k_1} x_3 + \frac{2^{-j/2}}{\hat k_1}.
  \end{align*}
  We therefore have
  \begin{align} \label{eq:hp-claim2-int-domain}
    \abs{\innerprod{f_0(S_s\cdot)\chi_{\Omega}}{\psi_{j,\hat{k},m}}}
    \lesssim \abs{\int_{-2^{-j/2}}^{2^{-j/2}} \int_{0}^{K_1}
      \int_{L_2}^{L_1} f_0(S_s x) \psi_{j,\hat k, m}(x) \, \mathrm{d}x_2 \mathrm{d}x_1
      \mathrm{d}x_3 },
  \end{align}
  where the upper integration bound for $x_1$ is $K_1 = 2^{-j(\alpha/2)}- 2^{-j\alpha/2}\hat k_1 -
  2^{j(\alpha-1)/2}\hat k_2 x_3$ which follows from solving $L_2$ for $x_1$ and using that
  $\abs{x_2} \le 2^{-j/2}$. We remark that the inner integration over $x_2$ is along lines parallel
  to the singularity plane $\pa \Omega = \{0\} \times \R^2$; as mentioned, this allows us to better
  handle the singularity and will be used several times throughout this paper.

  For a fixed $\abs{x_3}\le 2^{-j/2}$, we consider the one-dimensional Taylor expansion for $f_0(S_s
  \cdot)$ at each point $x = (x_1,x_2,x_3) \in L_2$ in the $x_2$-direction:
  \begin{align*}
    f_0(S_s x) &=
    a(x_1,x_3)+b(x_1,x_3)\left(x_2+\frac{2^{j(\alpha-1)/2}}{\hat{k}_1}
      \ x_1 + \frac{\hat k_2}{\hat k_1}x_3 - \frac{2^{-j/2}}{\hat
        k_1}\right)\\ &\phantom{=}+c(x_1,x_2,x_3)
    \left(x_2+\frac{2^{j(\alpha-1)/2}}{\hat{k}_1} \ x_1 + \frac{\hat
        k_2}{\hat k_1}x_3 - \frac{2^{-j/2}}{\hat k_1}\right)^\beta,
  \end{align*}
  where $a(x_1,x_3),b(x_1,x_3)$ and $c(x_1,x_2,x_3)$ are all bounded in absolute value by $C(1+\abs{s_1})^\beta$. Using
  this Taylor expansion in (\ref{eq:hp-claim2-int-domain}) yields
  \begin{align} \label{eq:hp-split-into-I_l}
    \abs{\innerprod{f_0(S_s\cdot)\chi_{\Omega}}{\psi_{j,\hat{k},m}}}
    \lesssim (1+\abs{s_1})^\beta \abs{\int_{-2^{-j/2}}^{2^{-j/2}}
      \int_{0}^{K_1 } \sum_{l=1}^{3}I_{l}(x_1,x_3) \, \mathrm{d}x_1 \mathrm{d}x_3 },
  \end{align}
  where
  \begin{align*}
    I_1(x_1,x_3) &= \abs{\int_{L_1}^{L_2} \psi_{j,\hat k,m}(x) \mathrm{d}x_2}, \\
    I_2(x_1,x_3) &= \abs{\int_{L_1}^{L_2} (x_2+K_2) \, \psi_{j,\hat k,m}(x)
      \mathrm{d}x_2}, \\
    I_3(x_1,x_3) &= \abs{\int_{0}^{-2^{-j/2}/\hat k_1} (x_2)^\beta\, \psi_{j,\hat k,m}(x_1,x_2-K_2,x_3) \mathrm{d}x_2},
  \end{align*}
  and \[K_2 = \frac{2^{j(\alpha-1)/2}}{\hat{k}_1} \ x_1 + \frac{\hat k_2}{\hat k_1}x_3 - \frac{2^{-j/2}}{\hat k_1}.\]
  
  We next estimate each of the integrals $I_1$, $I_2$, and $I_3$ separately. We start with
  estimating $I_1(x_1,x_3)$. The Fourier Slice Theorem~(\ref{eq:fourier-slice-thm}) yields directly
  that
  \[
  I_1(x_1,x_3) = \absBig{\int_{\R} \psi_{j,\hat k,m}(x) \mathrm{d}x_2} =
  \absBig{\int_{\R^2} \hat \psi_{j,\hat k,m}(\xi_1,0,\xi_3)\, \expo{2\pi
      i \innerprod{(x_1,x_3)}{(\xi_1,\xi_3)}} \mathrm{d}\xi_1\mathrm{d}\xi_3}.
  \]
  By assumptions (i) and (ii) from Theorem~\ref{thm:3d-opt-sparse}, we
  have, for all $\xi=(\xi_1,\xi_2,\xi_3)\in \R^3$,
  \begin{align*}
    \absbig{\hat \psi_{j,\hat k,m}(\xi)} \lesssim 2^{-j\frac{2+\alpha}{4}}
    \absbig{h(2^{-j\alpha/2}\xi_1)}
    \left(1+\absBig{\tfrac{2^{-j/2}\xi_2}{2^{-j\alpha/2}\xi_1} + \hat
        k_1}\right)^{-\gamma}
    \left(1+\absBig{\tfrac{2^{-j/2}\xi_3}{2^{-j\alpha/2}\xi_1} + \hat
        k_2}\right)^{-\gamma}
  \end{align*}
  for some $h \in L^1(\R)$. Hence, we can continue our estimate of
  $I_1$:
  \begin{align*}
    I_1(x_1,x_3) \lesssim \int_{\R^2} 2^{-j\frac{2+\alpha}{4}}
    \absbig{h(2^{-j\alpha/2}\xi_1)} (1 + \abssmall{\hat k_1})^{-\gamma}
    \left(1+\absBig{\tfrac{2^{-j/2}\xi_3}{2^{-j\alpha/2}\xi_1} + \hat
        k_2}\right)^{-\gamma} \mathrm{d}\xi_1\mathrm{d}\xi_3 ,
  \end{align*}
  and further, by a change of variables,
  \begin{align*}
    I_1(x_1,x_3) &\lesssim \int_{\R^2} 2^{j\alpha/4} \abs{h(\xi_1)} (1
    + \abssmall{\hat k_1})^{-\gamma} \left(1+\abs{\frac{\xi_3}{\xi_1}
        + \hat
        k_2}\right)^{-\gamma}  \mathrm{d}\xi_1\mathrm{d}\xi_3  \\
    &\lesssim 2^{j\alpha/4} (1 + \abssmall{\hat k_1})^{-\gamma},
  \end{align*}
since $h\in L^1(\R)$ and
$(1+\abssmall{\xi_3/\xi_1 + \hat k_2})^{-\gamma} =
     O(1)$  as $\abs{\xi_1}\to \infty$ for fixed $\xi_3$.

We estimate $I_2(x_1,x_3)$ by
    \begin{align*}
  I_2(x_1,x_3) \le \abs{\int_{\R} x_2\, \psi_{j,\hat k,m}(x) \mathrm{d}x_2} + 
\abs{K_2} \abs{\int_{\R} \psi_{j,\hat k,m}(x) \mathrm{d}x_2} =: S_1 + S_2 
\end{align*}
Applying the Fourier Slice Theorem again and then utilizing the decay assumptions on $\hat \psi$
yields
\begin{align*}
  S_1 &= \abs{\int_{\R} x_2 \psi_{j,\hat k,m}(x) \mathrm{d}x_2} \\
&\le   \abs{\int_{\R^2} \left(\frac{\pa}{\pa \xi_2} \hat \psi_{j,\hat
      k,m}\right)(\xi_1,0,\xi_3)\, \expo{2\pi
      i \innerprod{(x_1,x_3)}{(\xi_1,\xi_3)}} \mathrm{d}\xi_1\mathrm{d}\xi_3} 
\lesssim \frac{2^{j(\alpha/4-1/2)}}{(1+ \abssmall{\hat k_1})^{\beta+1}}.
\end{align*}
Since $\abs{x_1} \le -\hat{k}_1/2^j$ and $\abs{\xi_3} \le 2^{-j/2}$, we
have that 
 \[K_2 \le \abs{\frac{2^{j(\alpha-1)/2}}{\hat{k}_1} \
     \frac{\hat k_1}{2^j} + 2^{-j/2} - \frac{2^{-j/2}}{\hat
       k_1}},\]
The following estimate of $S_2$ then follows directly from the
estimate of $I_1$:
\begin{align*}
  S_2 \lesssim \abs{K_2} 2^{j\alpha/4}\, (1 + \abssmall{\hat
    k_1})^{-\gamma}
\lesssim 2^{j(\alpha/4-1/2)}  (1 + \abssmall{\hat
    k_1})^{-\gamma}.
%\frac{2^{j(\alpha/4-1/2)}}{  (1 + \abssmall{\hat k_1})^{-\gamma}}.
\end{align*}
From the two last estimate, we conclude that $ I_2(x_1,x_3s) \lesssim \frac{2^{j(\alpha/4-1/2)}}{(1+ \abssmall{\hat k_1})^{\beta+1}}$.

Finally, we estimate $I_3(x_1,x_3)$ by 
\begin{align*}
  I_3(x_1,x_3) &\le \abs{\int_{0}^{-2^{-j/2}/\hat k_1} (x_2)^\beta\,
    \normsmall[L^\infty]{\psi_{j,\hat k,m}} \, \mathrm{d}x_2} \\ 
&\lesssim 2^{j(\alpha/4+1/2)} \abs{\int_{0}^{-2^{-j/2}/\hat k_1} (x_2)^\beta\,
     \mathrm{d}x_2} = \frac{2^{j(\alpha/4-\beta/2)}}{\abssmall{\hat k_1}^{\beta+1}}.
\end{align*}

Having estimated $I_1$, $I_2$ and $I_3$, we continue with
(\ref{eq:hp-split-into-I_l}) and obtain 
 \[\abs{\innerprod{f_0(S_s\cdot)\chi_{\Omega}}{\psi_{j,\hat{k},m}}}
  \lesssim  (1+\abs{s_1})^\beta \left(
    \frac{2^{-j(\alpha/4+1/2)}}{(1+\abssmall{\hat k_1})^{\gamma-1}} + \frac{2^{-j(\alpha/4+1/2+\beta/2)}}{\abssmall{\hat
k_1}^{\beta}}\right). 
\]
By performing a similar analysis for the case $\abssmall{\hat k_2} \le
\abssmall{\hat k_1} $, we arrive at % therefore have 
\begin{align} \label{eq:hp-claim2-est-with-shear}
  \absbig{\innerprod{f_0(S_s\cdot)\chi_{\Omega}}{\psi_{j,\hat{k},m}}}
  \lesssim  \min_{i=1,2}\left\{(1+\abs{s_i})^\beta \left(
    \frac{2^{-j(\alpha/4+1/2)}}{(1+\abssmall{\hat k_i})^{\gamma-1}} +
    \frac{2^{-j(\alpha/4+1/2+\beta/2)}}{\abssmall{\hat
        k_i}^{\beta}}\right)  \right\}
\end{align}

 Suppose that $s_1 \le 3$ and $s_2 \le 3$. Then
 (\ref{eq:hp-claim2-est-with-shear}) reduces to 
\begin{align*}
  \abs{\innerprod{f}{\psi_{j,k,m}}}
  &\lesssim \min_{i=1,2}\left\{ 
      \frac{2^{-j(\alpha/4+1/2)}}{(1+\abssmall{\hat k_i})^{\gamma-1}} +
      \frac{2^{-j(\alpha/4+\beta/2+1/2)}}{\abssmall{\hat
          k_i}^{\beta}} \right\}  \\ %\label{eq:hp-est-bounded-shear}  \\
&\lesssim
\min_{i=1,2}\left\{
  \frac{2^{-j(\alpha/4+1/2)}}{|k_i+2^{j(\alpha-1)/2}s_i|^{3}}\right\}, %\nonumber
\end{align*}
since $\gamma \ge 4$ and $\beta\ge \alpha$.
On the other hand, if $s_1 \ge 3/2$ or $s_1 \ge 3/2$, then
\begin{align*} %\label{eq:hp-est-unbounded-shear}
  \absbig{\innerprod{f}{\psi_{j,k,m}}}
  \lesssim 2^{-j (\alpha/2+1/4)\alpha}.
\end{align*}
To see this, note that 
\begin{multline*}
  \min_{i=1,2}\left\{(1+\abs{s_i})^{\beta}
    \frac{2^{-j(\alpha/4+\beta/2+1/2)}}{(1+\abssmall{\hat k_i})^{\beta}}
  \right\} =   \min_{i=1,2}\left\{ \tfrac{(1+\abs{s_i})^{\beta}}{\abs{s_i}^{\beta}}
    \frac{2^{-j(\alpha/4+\beta/2+1/2)}}{(\abssmall{(1+k_i)/s_i +
        2^{j(\alpha-1)/2}})^{\beta}} 
  \right\}\\ \lesssim
  \frac{2^{-j(\alpha/4+\beta/2+1/2)}}{2^{j(\alpha-1)\beta/2}} = 2^{-j(\alpha/4+1/2+\alpha\beta/2)}. 
\end{multline*}
This completes the proof of the estimates (\ref{eq:hp-estimate1}) and
(\ref{eq:hp-estimate2}) in (i) and (ii), respectively.

\smallskip

Finally, we need to consider the case~(iii) in which the normal vector of the hyperplane $H$ is of the
form $(0,s_1,s_2)$ for $s_1,s_2 \in \R$. Let $\tilde{\Omega}=\setprop{x \in \R^3}{s_1x_2 \ge
  -s_2x_3}$. As in the first part of the proof, it suffices to consider
$\innerprod{\chi_{\tilde{\Omega}}f_0}{\psi_{j,k,m}}$, where $\supp \psi_{j,k,m} \subset
\cP_{j,k}-(2^{-j\alpha/2},0,0) = \tilde\cP_{j,k}$ with respect to some new origin. As before the
boundary of $\tilde\cP_{j,k}$ intersects the origin. By assumptions~(i) and (ii) from
Theorem~\ref{thm:3d-opt-sparse}, we have that 
\[ \left( \frac{\pa}{\pa \xi_1}\right)^\ell \hat \psi(0,\xi_2,\xi_3)=0 \quad \text{for }
\ell =0,1,\]
which implies that
\[ \int_{\R} x_1^\ell \psi(x) \mathrm{d}x_1 = 0 \quad \text{for all } x_2,x_3
\in \R \text{ and } \ell =0,1.\]
Therefore, we have
\begin{align}\label{eq:hp-shear-preserves-vm}
  \int_{\R} x_1^\ell \psi(S_k x) \mathrm{d}x_1 = 0 \quad \text{for all }
  x_2,x_3 \in \R, k=(k_1,k_2)\in \R^2, \text{ and } \ell =0,1,
\end{align}
since shearing operations $S_k$ preserve vanishing moments along the $x_1$ axis. Since the $x_1$
axis is in a direction parallel to the singularity plane $\partial \tilde{\Omega}$, we employ Taylor
expansion of $f_0$ in this direction. By (\ref{eq:hp-shear-preserves-vm}) everything but the last
term in the Taylor expansion disappears, and we obtain
\begin{align*}
  \abs{\innerprod{\chi_{\tilde{\Omega}}f_0}{\psi_{j,k,m}}} &\lesssim
2^{j(\alpha/4+1/2)} \int_{-2^{-j/2}}^{2^{-j/2}}
\int_{-2^{-j/2}}^{2^{-j/2}} \int_{-2^{-j\alpha/2}}^{2^{-j\alpha/2}} (x_1)^\beta \,\mathrm{d}x_1
\mathrm{d}x_2 \mathrm{d}x_3 \\
& \lesssim 2^{j(\alpha/4+1/2)}\, 2^{-j}\, 2^{-j(\beta+1)\alpha/2} = 2^{-j(\alpha/4+1/2+\alpha\beta/2)},
\end{align*}
which proves claim (iii). % (\ref{eq:hp-estimate-new}).
% is faster than (\ref{eq:hp-total-est-unbounded-shear}) and
% therefore not contribute to the sought estimates. 
\end{proof}

% Notice that in Case 2a, condition (i) or (ii) can occur, whereas in Case 2b, all three conditions can occur.

\subsection{General $C^\alpha$-smooth discontinuity}
\label{sec:general-case}

We now extend the result from the previous section, Theorem~\ref{thm:decay-hyperplane}, from a
linear discontinuity surface to a general, non-linear $C^\alpha$-smooth discontinuity surface. To
achieve this, we will mainly focus on the truncation arguments since the linearized estimates can be
handled by the machinery developed in the previous subsection.

\begin{theorem}\label{thm:decay-ca-smooth}
  Let $\psi \in L^2(\R^3)$ be compactly supported, and assume that
  $\psi$ satisfies conditions (i) and (ii) of
  Theorem~\ref{thm:3d-opt-sparse}. Further, let $j \ge 0$ and $p \in
  \Z^3$, and let $\lambda \in \Lambda_{j,p}$. Suppose $f \in
  \cE_\alpha^\beta(\R^3)$ for $1<\alpha\le \beta \le 2$ and $\nu, \mu
  >0$. For fixed $\hat{x}=(\hat{x}_1,\hat{x}_2,\hat{x}_3) \in
  \intt(\cQ_{j,p}) \cap \intt(\supp \psi_{\lambda} ) \cap \partial B$,
  let $H$ be the tangent plane to the discontinuity surface
  $\partial B$ at $(\hat{x}_1,\hat{x}_2, \hat{x}_3)$. Then, 
\begin{romannum}
\item if $H$ has normal vector  $(-1,s_1,s_2)$ with $s_1 \le 3$
  and $s_2 \le 3$, 
\begin{equation}\label{eq:estimate1}
|\langle f,\psi_{\lambda}\rangle| \leq C \cdot \min_{i=1,2} \left\{
\frac{2^{-j(\alpha/4+1/2)}}{|k_i+2^{j(\alpha-1)/2}s_i|^{\alpha+1}}\right\}, 
\end{equation}
for some constant $C>0$.
\item if $H$ has normal vector  $(-1,s_1,s_2)$ with $s_1 \ge 3/2$
  or $s_2 \ge 3/2$, 
 \begin{equation}\label{eq:estimate2}
|\langle f,\psi_{\lambda}\rangle| \leq C \cdot 2^{-j(\alpha/2+1/4)\alpha},
\end{equation}
for some constant $C>0$.
\item if $H$ has normal vector  $(0,s_1,s_2)$ with $s_1,s_2 \in \R$,
 then \eqref{eq:estimate2} holds.
\end{romannum}
\end{theorem}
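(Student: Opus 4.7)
The plan is to locally linearize $\partial B$ at $\hat x$ by its tangent plane $H$, and reduce the coefficient estimate to Theorem~\ref{thm:decay-hyperplane} plus a truncation correction controlled by the $C^\alpha$-smoothness of $\partial B$. Let $\tilde B\subset\R^3$ be the half-space bounded by $H$ that agrees with $B$ locally near $\hat x$, and set $\tilde f := f_0 + f_1\chi_{\tilde B}$. The decomposition
\begin{equation*}
\langle f,\psi_\lambda\rangle
= \langle \tilde f,\psi_\lambda\rangle
+ \langle f_1(\chi_B-\chi_{\tilde B}),\psi_\lambda\rangle
\end{equation*}
splits the coefficient into a linearized piece, directly handled by Theorem~\ref{thm:decay-hyperplane}, and a truncation piece that carries the main technical challenge.

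For the linearized piece, $\tilde f$ has a hyperplane discontinuity along $H$ with normal $(-1,s_1,s_2)$ or $(0,s_1,s_2)$, so Theorem~\ref{thm:decay-hyperplane} applies with the same $s_1,s_2$. In case~(i) it yields $|\langle \tilde f,\psi_\lambda\rangle|\lesssim \min_{i=1,2} 2^{-j(\alpha/4+1/2)}/|k_i+2^{j(\alpha-1)/2}s_i|^3$, which already dominates \eqref{eq:estimate1} since $\alpha+1\le 3$. In cases (ii) and (iii) it yields $2^{-j(\alpha/4+1/2+\alpha\beta/2)}\le 2^{-j(\alpha/2+1/4)\alpha}$, using $\beta\ge\alpha$.

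For the truncation piece, apply the shear $S_{-s}$ from the proof of Theorem~\ref{thm:decay-hyperplane}: the plane $H$ becomes $\{x_1=\hat x_1\}$, $\partial B$ becomes the graph $x_1=\hat x_1+E(x_2,x_3)$, and Taylor's theorem applied to the $C^\alpha$ radius function $\rho$ (with $\|\rho\|_{\dot{C}^\alpha}\le\nu$ and the tangent-plane gradient vanishing at $\hat x$) gives
\begin{equation*}
|E(x_2,x_3)|\le C\nu\|(x_2,x_3)-(\hat x_2,\hat x_3)\|_2^{\alpha}.
\end{equation*}
Bound $|\langle f_1(\chi_B-\chi_{\tilde B}),\psi_\lambda\rangle|\le\|f_1\|_\infty\|\psi_\lambda\|_\infty\cdot |(B\Delta\tilde B)\cap\supp\psi_{j,\hat k,m}|$, where $\hat k_i=k_i+2^{j(\alpha-1)/2}s_i$. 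Using $\|\psi_\lambda\|_\infty\lesssim 2^{j(\alpha+2)/4}$ and $\|f_1\|_\infty\le\mu$, this reduces to a 2D volume estimate on the plate $(x_2,x_3)$. As in the proof of Theorem~\ref{thm:decay-hyperplane}, the cross-section of $\supp\psi_{j,\hat k,m}$ at $x_1=\hat x_1$ is a strip in $(x_2,x_3)$ of area $\sim 2^{-j}/(1\vee|\hat k_i|)$ for each $i=1,2$. A dyadic annular decomposition of this strip around the tangent point $(\hat x_2,\hat x_3)$, combined with the pointwise bound $|E|\le C\nu r^\alpha$, yields the required $|\hat k_i|^{-(\alpha+1)}$ decay in case (i). In cases (ii) and (iii), the tilt of $H$ combined with the admissible shear range $|k|\le \lceil 2^{j(\alpha-1)/2}\rceil$ forces the strip to be extremely thin, so the truncation alone already yields the $2^{-j(\alpha/2+1/4)\alpha}$ bound.

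The main technical obstacle is the case-(i) volume estimate: a naive $L^\infty$ bound on $\int_{\text{strip}}|E|$ yields only $|\hat k_i|^{-1}$ decay, which falls short of the claimed $|\hat k_i|^{-(\alpha+1)}$. Bridging this gap requires refining the truncation integral by exploiting the decay of $\hat\psi$ along the $x_2$- and $x_3$-axes (encoded by $\gamma\ge 4$) and by Taylor-expanding the smooth factors of the integrand around the tangent point, analogously to the $I_1+I_2+I_3$ splitting used in the proof of Theorem~\ref{thm:decay-hyperplane}.
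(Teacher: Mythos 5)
Your architecture --- decompose $\innerprod{f}{\psi_\lambda}$ into a linearized term, handled by Theorem~\ref{thm:decay-hyperplane}, plus a truncation term handled by an $L^\infty$-times-volume bound --- is the paper's, and your treatment of the linearized term is correct. But the theorem stands or falls with the truncation estimate in case~(i), and there your argument has a genuine gap that you concede yourself: truncating against the \emph{global} tangent plane, i.e.\ bounding $\abssmall{(B\Delta\tilde B)\cap \supp\psi_\lambda}$, really does yield only one power $(1+\abssmall{\hat k_i})^{-1}$, where $\hat k_i = k_i+2^{j(\alpha-1)/2}s_i$, and no dyadic decomposition of the strip can fix this. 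The loss is not an artifact of a crude estimate: at points of the strip at distance $\sim 2^{-j/2}$ from $\hat x$ along its long direction, the surface deviates from $H$ by $\sim \nu 2^{-j\alpha/2}$, which is comparable to the thickness of $\supp\psi_\lambda$, so $(B\Delta\tilde B)\cap\supp\psi_\lambda$ genuinely has measure of order $2^{-j(\alpha+2)/2}(1+\abssmall{\hat k_i})^{-1}$ (take $\pa B$ a paraboloid to see this). Your proposed remedy --- invoking the decay of $\hat\psi$ in $\xi_2,\xi_3$ --- is not developed and is not how the missing factor $(1+\abssmall{\hat k_i})^{-\alpha}$ is recovered; the paper uses the Fourier decay only for the linearized term.

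The paper closes the gap geometrically. It works slice by slice on the planes $x_3=\hat x_3$ (resp.\ $x_2=\hat x_2$) and linearizes the slice curve $\pa B\cap\{x_3=\hat x_3\}$ around the tangent line \emph{of that slice}, not around the global plane $H$. On each slice the tangent line meets the two-dimensional shearlet box in a chord of length $d\lesssim (1+s_1^2)^{1/2}\,2^{-j/2}/(1+\abssmall{\hat k_1})$, and since the slice curve is $C^\alpha$ and tangent to that line, the portion of the curve lying in the box is contained in a parallelogram of size $\sim d\times \nu d^{\alpha}$. Stacking the slices produces a truncation region $\cP$ hugging the \emph{surface} (not the tangent plane) with $\vol{\cP}\lesssim 2^{-j/2}d^{1+\alpha}$; multiplying by $\normsmall[L^\infty]{\psi_\lambda}\sim 2^{j(\alpha+2)/4}$ gives precisely $2^{-j(\alpha/4+1/2)}(1+\abssmall{\hat k_i})^{-(\alpha+1)}$, and the complement term is again a hyperplane problem. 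Cases~(ii) and (iii) are deduced from the same truncation bound using $\abssmall{\hat k_i}\gtrsim 2^{j(\alpha-1)/2}$, so they inherit the gap as well. To repair your proof you must replace the half-space $\tilde B$ by this surface-adapted truncation region, or equivalently perform the linearization per slice.
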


\begin{proof}
  Let $(j,k,m) \in \Lambda_{j,p}$, and fix $\hat{x}=(\hat{x}_1,\hat{x}_2,\hat{x}_3) \in
  \intt(\cQ_{j,p}) \cap \intt(\supp \psi_{\lambda}) \cap \partial B$. We first consider the case~(i)
  and (ii). Let $(-1,s_1,s_2)$ be the normal vector to the discontinuity surface $\partial B$ at
  $(\hat{x}_1,\hat{x}_2, \hat{x}_3)$. Let $\pa B$ be parametrized by $(E(x_2,x_3),x_2,x_3)$ with $E
  \in C^\alpha$ in the interior of $\cS_{j,p}$. We then have $s_1 =
  \pa^{(1,0)}E(\hat{x}_2,\hat{x}_3)$ and $s_2 = \pa^{(0,1)}E(\hat{x}_2,\hat{x}_3)$.

  By translation symmetry, we can assume that the discontinuity surface satisfies $E(0,0)=0$ with
  $(\hat{x}_1,\hat{x}_2,\hat{x}_3) = (0,0,0)$. Further, since the conditions (i) and (ii) in
  Theorem~\ref{thm:3d-opt-sparse} are independent on the translation parameter $m$, it does not play
  a role in our analysis. Hence, we simply choose $m = (0,0,0)$. Also, since $\psi$ is compactly
  supported, there exists some $L > 0$ such that $\supp{\psi} \subset \itvcc{-1}{1}^3$. By a
  rescaling argument, we can assume $L=1$. Therefore, we have that
  \begin{align*}
    \supp \psi_{j,k,0} \subset \cP_{j,k}. %\label{eq:supp}
  \end{align*}
 where $\cP_{j,k}$ was introduced in \eqref{eq:cPjk}.

 Fix $f \in \cE_\alpha^\beta(\R^3)$. We can without loss of generality assume that $f$ is only
 nonzero on $B$. We let $\cP$ be the smallest parallelepiped which contains the discontinuity
 surface parametrized by $(E(x_2,x_3),x_2,x_3)$ in the interior of $\supp \psi_{j,k,0}$. Moreover,
 we choose $\cP$ such that two sides are parallel to the tangent plane with normal vector
 $(-1,s_1,s_2)$. Using the trivial identity $f = \chi_{\cP} f+ \chi_{\cP^c}f$, we see that
  \begin{align}
    \innerprod{f}{\psi_{j,k,0}} =
    \innerprod{\chi_{\cP}f}{\psi_{j,k,0}} +
    \innerprod{\chi_{\cP^c}f}{\psi_{j,k,0}}.\label{eq:splitting}
  \end{align}
  We will estimate $\abs{\innerprod{f}{\psi_{j,k,0}}}$ by estimating the two terms on the right hand
  side of (\ref{eq:splitting}) separately. In the second term
  $\innerprod{\chi_{\cP^c}f}{\psi_{j,k,0}}$ the shearlet only interacts with a discontinuity plane,
  and not a general $C^\alpha$ surface, hence this term corresponds to a linearized estimate (see
  Section~\ref{sec:general-orga-proofs}). Accordingly, the first term is a truncation term.

  Let us start by estimating the first term $\innerprod{\chi_{\cP}f}{\psi_{j,k,0}}$ in
  (\ref{eq:splitting}). Using the notation $\hat{k}_1= k_1 + 2^{j(\alpha-1)/2}s_1$ and $\hat{k}_2=
  k_2 + 2^{j(\alpha-1)/2}s_2$, we claim that
  \begin{align}
    \abs{\innerprod{\chi_{\cP}f}{\psi_{j,k,0}}} \lesssim \min_{i=1,2}
    \left( (1+s_i^2)^{\frac{\alpha+1}{2}}
      \frac{2^{-j(\alpha/4+1/2)}}{(1+\abssmall{\hat k_i})^{\alpha+1}}
    \right).
    \label{eq:claim1}
  \end{align}
  We will prove this claim in the following paragraphs. 

  We can assume that $\hat k_1 < 0$ and $\hat k_2<0$ since the other cases can be handled similarly.
  We fix $|\hat x_3| \le 2^{-j/2}$ and perform first a 2D analysis on the plane $x_3=\hat x_3$.
  After a possible translation (depending on $\hat x_3$) we can assume that the tangent line of $\pa
  B$ on the hyperplane is of the form
  \[ x_1 = s_1(\hat x_3) x_2 + \hat x_3. \] Still on the hyperplane, the shearlet
  normal direction is $(1,k_1/2^{j/2})$. Let $d=d(\hat x_3)$ denote the
  distance between the two points, where the tangent line intersects
  the boundary of the shearlet box $\cP_{j,k}$. It follows that
  \begin{align*}
    d(\hat x_3) \lesssim (1+s_1(\hat x_3))^{1/2}
    \frac{2^{-j/2}}{\abs{1+k_1+2^{j(\alpha-1)/2}s_1(\hat x_3)}}
  \end{align*}
  as in the proof of Proposition~2.2 in \cite{KL10}. We can replace
  $s_1(\hat x_3)$ by $s_1=s_1(0)$ in the above estimate. To see this note
  that $E \in C^\alpha$ implies
  \[ s_1(\hat x_3)-s_1(0) \lesssim \abs{\hat x_3}^{\alpha-1} \le
  2^{-j(\alpha-1)/2},\] and thereby,
  \[ \frac{2^{-j/2}}{\abs{1+k_1+2^{j(\alpha-1)/2}s_1(\hat x_3)}} \lesssim
  \frac{2^{-j/2}}{\abs{1+\tilde{k}_1+2^{j(\alpha-1)/2}s_1(0)}}, \]
  where $\tilde{k}_1 = k_1 + 2^{j(\alpha-1)/2}(s_1(\hat x_3)-s_1(0))$.
  Since
  \[ \abssmall{\tilde{k}_1}-C \le \abs{k_1} \le \abssmall{\tilde{k}_1}
  + C\] for some constant $C$, there is no need to distinguish between
  $k_1$ and $\tilde{k}_1$, and we arrive at
  \begin{align}\label{eq:universal-s}
    d(\hat x_3) \lesssim (1+s_1^2)^{1/2}
    \frac{2^{-j/2}}{1+\abs{k_1+2^{j(\alpha-1)/2}s_1}} =:d
  \end{align}
  for any $\abs{\hat x_3} \le 2^{-j/2}$.

  The cross section of our parallelepiped $\cP$ on the hyperplane will be a parallelogram with side
  length $d$ and height $d^\alpha$ (up to some constants). Since $\abs{x_3} \le 2^{-j/2}$ for
  $(x_1,x_2,x_3) \in \cP_{j,k}$, the volume of $\cP$ is therefore bounded by:
  \[ \vol{\cP} \lesssim 2^{-j/2} d^{1+\alpha} =
  (1+s_1^2)^{\frac{\alpha+1}{2}}
  \frac{2^{-j(\alpha/2+1)}}{(1+\abs{k_1+2^{j(\alpha-1)/2}s_1})^{\alpha+1}}.\]
  In the same way we can obtain an estimate based on $k_2$ and $s_2$
  with $k_1$ and $s_1$ replaced by $k_2$ and $s_2$, thus
  \[ \vol{\cP} \lesssim \min_{i=1,2} \left\{
    (1+s_i^2)^{\frac{\alpha+1}{2}}
    \frac{2^{-j(\alpha/2+1)}}{(1+\abs{k_i+2^{j(\alpha-1)/2}s_i})^{\alpha+1}}\right\}.\]
  Finally, using $\abs{\innerprod{\chi_{\cP}f}{\psi_{j,k,0}}} \le
  \norm[L^\infty]{\psi_{j,k,0}} \vol{\cP} = 2^{j(\alpha/4+1/2)}
  \vol{\cP}$, we arrive at our claim (\ref{eq:claim1}).

  We turn to estimating the linearized term in (\ref{eq:splitting}).
  This case can be handled as the proof of
  Theorem~\ref{thm:decay-hyperplane}, hence we therefore have
\begin{align} \label{eq:claim2-est-with-shear}
  \absbig{\innerprod{f_0(S_s\cdot)\chi_{\Omega}}{\psi_{j,\hat{k},0}}}
  \lesssim  \min_{i=1,2}\left\{(1+\abs{s_i})^\beta \left(
    \frac{2^{-j(\alpha/4+1/2)}}{(1+\abssmall{\hat k_i})^{\gamma-1}} +
    \frac{2^{-j(\alpha/4+\beta/2+1/2)}}{\abssmall{\hat
        k_i}^{\beta}}\right)  \right\}.
\end{align}
By summarizing from estimate (\ref{eq:claim1}) and
(\ref{eq:claim2-est-with-shear}), we conclude that
\begin{multline} \label{eq:total-est-with-shear}
  \absbig{\innerprod{f}{\psi_{j,k,0}}}
  \lesssim \min_{i=1,2}\biggl\{(1+\abs{s_i})^\beta \left(
      \frac{2^{-j(\alpha/4+1/2)}}{(1+\abssmall{\hat k_i})^{\gamma-1}} +
      \frac{2^{-j(\alpha/4+\beta/2+1/2)}}{\abssmall{\hat
          k_i}^{\beta}}\right) \\ +  (1+s_i^2)^{\frac{\alpha+1}{2}}
 \frac{2^{-j(\alpha/4+1/2)}}{(1+\abssmall{\hat
    k_i})^{\alpha+1}} \biggr\}.
\end{multline}
If $s_1 \le 3$ and $s_2 \le 3$, this reduces to 
\begin{align*}
  \absbig{\innerprod{f}{\psi_{j,k,0}}}
  &\lesssim \min_{i=1,2}\left\{ 
      \frac{2^{-j(\alpha/4+1/2)}}{(1+\abssmall{\hat k_i})^{\gamma-1}} +
      \frac{2^{-j(\alpha/4+\beta/2+1/2)}}{\abssmall{\hat
          k_i}^{\beta}} +   \frac{2^{-j(\alpha/4+1/2)}}{(1+\abssmall{\hat
    k_i})^{\alpha+1}} \right\} \\
&\lesssim
\min_{i=1,2}\left\{ \frac{2^{-j(\alpha/4+1/2)}}{|k_i+2^{j(\alpha-1)/2}s_i|^{\alpha+1}}\right\},
%\label{eq:total-est-bounded-shear}
\end{align*}
since $\gamma \ge 4$ and $\beta\ge \alpha$. On the other hand, if $s_1 \ge 3/2$ or $s_1 \ge 3/2$,
then
\begin{align*} %\label{eq:total-est-unbounded-shear}
  \absbig{\innerprod{f}{\psi_{j,k,0}}}
  \lesssim 2^{-j (\alpha/2+1/4)\alpha},
\end{align*}
which is due to the last term in (\ref{eq:total-est-with-shear}). To see this, note that
\begin{multline*}
  \min_{i=1,2}\left\{(1+s_i^2)^{\frac{\alpha+1}{2}}
    \frac{2^{-j(\alpha/4+1/2)}}{(1+\abssmall{\hat k_i})^{\alpha+1}}
  \right\} =   \min_{i=1,2}\left\{ \frac{(1+s_i^2)^{\frac{\alpha+1}{2}}}{\abs{s_i}^{\alpha+1}}
    \frac{2^{-j(\alpha/4+1/2)}}{(\abssmall{k_i/s_i +
        2^{j(\alpha-1)/2}})^{\alpha+1}} 
  \right\}\\ \lesssim
  \frac{2^{-j(\alpha/4+1/2)}}{2^{j(\alpha-1)(\alpha+1)/2}} = 2^{-j(\alpha/2+1/4)\alpha} 
\end{multline*}
This completes the proof of the estimates (\ref{eq:estimate1}) and (\ref{eq:estimate2}) in (i) and
(ii), respectively.

\smallskip

Finally, we need to consider the case~(iii), where the normal vector of the tangent plane $H$ is of
the form $(0,s_1,s_2)$ for $s_1,s_2 \in \R$. The truncation term can be handled as above, and the
linearization term as the proof of Theorem~\ref{thm:decay-hyperplane}.
\end{proof}

\section{Proof of Theorem~\ref{thm:3d-opt-sparse}}
\label{sec:proof-theorem-1}

Let $f \in \cE_\alpha^\beta(\R^3)$. By
Proposition~\ref{prop:main-smooth}, for $\alpha \le \beta$, we see that
shearlet coefficients associated with Case~1 meet the desired decay
rate~(\ref{eq:3d-approx-rate}). We therefore only need to consider
shearlet coefficients from Case 2, and, in particular, their decay
rate. For this, let $j \ge 0$ be sufficiently large and let $p \in
\Z^3$ be such that the associated cube satisfies $\cQ_{j,p} \in
\cQ_j$, hence $\intt(\cQ_{j,p}) \cap  \partial B \neq \emptyset$.

Let $\eps > 0$. Our goal will now be to estimate first
$\card{\Lambda_{j,p}(\eps)}$ and then $\card{\Lambda(\eps)}$. By assumptions on
$\psi$, there exists a $C>0$ so that $\norm[L^1]{\psi} \le C$. 
This implies that
\[
|\langle f,\psi_{\lambda}\rangle| \le \norm[L^\infty]{f}
\norm[L^1]{\psi_\lambda} \le \mu\, C\, 2^{-j(\alpha+2)/4}.
\]
Assume for simplicity $\mu\, C = 1$. Hence, for estimating $\card{\Lambda_{j,p}(\eps)}$, it is
sufficient to restrict our attention to scales
\begin{equation*}
% \label{eq:upperbdonj} 
0 \le j \le
j_0 := \frac{4}{\alpha+2}\log_2(\eps^{-1}).
\end{equation*}

 {\em Case 2a}. It suffices to consider one fixed $\hat{x}=(\hat{x}_1,\hat{x}_2,\hat{x}_3) \in
  \intt(\cQ_{j,p}) \cap \intt(\supp \psi_{\lambda} )$ $ \cap \partial B$
  associated with one fixed normal $(-1,s_1,s_2)$ in each $\cQ_{j,p}$;
  the proof of this fact is similar to the estimation of the term
$\innerprod{\chi_{\cP}f}{\psi_{j,k,0}}$ in (\ref{eq:splitting}) in the
proof of Theorem~\ref{thm:decay-ca-smooth}.
% and we therefore leave it as an
%exercise for the reader to check this. 

 We claim that the following \emph{counting estimate} hold:
\begin{equation}
\label{eq:counting}
\card{M_{j,k,Q_{j,p}}}  \lesssim
\abssmall{k_1+2^{j(\alpha-1)/2}s_1}+\abssmall{k_2+2^{j(\alpha-1)/2}s_2}+1,
\end{equation}
for each $k=(k_1,k_2)$ with $\abs{k_1},\abs{k_2} \le \ceil{2^{j(\alpha-1)/2}}$, where
\[ 
M_{j,k,Q_{j,p}}:=\setprop{m \in \Z^3 }{ |\supp{\psi_{j,k,m}} \cap \partial B \cap \cQ| \neq 0}
 \]

 Let us prove this claim. Without of generality, we can assume $\cQ:=\cQ_{j,p} =
 [-2^{-j/2} 2^{-j/2}]^3$ and that $H$ is a tangent plane to $\partial B$ at $(0,0,0)$.
For fixed shear parameter $k$, let $\cP_{j, k}$ be given as in
 (\ref{eq:cPjk}).  Note that $\supp{\psi_{j,k,0}} \subset \cP_{j, k}$ and that
\begin{align*}
\cardsmall{M_{j, k,\cQ}} %&= \card{ \{ m \in \Z^3 : \supp{\psi_{j,k,m}} \cap \partial B \cap \cQ \} } \\
 \leq C \,\cdot\, \cardsmall{ \{ m_1 \in \Z : \bigl(\cP_{j,k}+(2^{-\alpha j/2}m_1,0,0)\bigr) \cap H \cap \cQ\}}
\end{align*}
Consider the cross section $\cP_0$ of $\cP_{j,\hat k}$:
\[
\cP_0 = \{ x \in \R^3 : x_1+\frac{{k}_1}{2^{j(\alpha-1)/2}}x_2+\frac{{k}_2}{2^{j(\alpha-1)/2}}x_3 = 0, |x_2|,|x_3| \leq 2^{-j/2} \}.
\]
Then we have  
\[
\cardsmall{M_{j,k,\cQ}} \leq C \, \cdot \, \card{ \{ m_1 \in \Z : \absbig{\bigl(\cP_{0}+(2^{-\alpha
      j/2}m_1,0,0)\bigr) \cap H \cap \cQ} \neq 0\}}
\]
Note that for $|x_2|,|x_3| \leq 2^{-j/2}$, 
\begin{align*}
H&: x_1-s_1x_2-s_2x_3 = 0, \qquad \text{and} \\
\cP_0+(2^{-\alpha j/2}m_1,0,0)&: x_1-2^{-\alpha j/2}m_1+\frac{{k}_1}{2^{j/2(\alpha-1)}}x_2+\frac{{k}_2}{2^{j/2(\alpha-1)}}x_3 = 0.
\end{align*}
Solving 
\[
s_1x_2+s_2x_3 = 2^{-\alpha j/2}m_1-\frac{{k}_1}{2^{j/2(\alpha-1)}}x_2-\frac{{k}_2}{2^{j/2(\alpha-1)}}x_3,
\]
we obtain 
\[
m_1 = 2^{j/2}\bigl( (k_1+2^{j/2(\alpha-1)}s_1) x_2 + (k_2+2^{j/2(\alpha-1)}s_2) x_3  \bigr).
\]
Since $|x_2|,|x_3| \leq 2^{-j/2}$, 
\[
|m_1| \leq |k_1+2^{j/2(\alpha-1)}s_1|+|k_2+2^{j/2(\alpha-1)}s_2|.
\]
This gives our desired estimate. 

Estimate \eqref{eq:estimate1} from Theorem~\ref{thm:decay-ca-smooth} reads
$\frac{2^{-j(\alpha/4+1/2)}}{|k_i+2^{j(\alpha-1)/2}s_i|^{\alpha+1}}
\gtrsim |\langle f,\psi_{\lambda}\rangle|>\eps$ which implies that
\begin{equation}\label{eq:number2}
|k_i+2^{j(\alpha-1)/2}s_i| \le C\cdot \eps^{-1/(\alpha+1)}\, 2^{-j\left(\frac{\alpha/4+1/2}{\alpha+1}\right)}
\end{equation}
for $i=1,2$.
From \eqref{eq:counting} and \eqref{eq:number2}, we then see that 
\begin{align*}
\card{\Lambda_{j,p}(\eps)}&\le C \sum_{(k_1,k_2) \in
  K_j(\eps)} \card{M_{j,k,Q_{j,p}}(\eps)}
 \\ %\label{eq:case1} \\ 
 &\le C \sum_{(k_1,k_2) \in
  K_j(\eps)}(\abssmall{k_1+2^{j(\alpha-1)/2}s_1}+\abssmall{k_2+2^{j(\alpha-1)/2}s_2}+1)
 \\ %\label{eq:case1} \\ 
&\le C \cdot \eps^{-3/(\alpha+1)}\,
2^{-j\left(\frac{3\alpha/4+3/2}{\alpha+1}\right)},
%\nonumber
\end{align*}
where $M_{j,k,Q_{j,p}}(\eps)=\setprop{m \in
  M_{j,k,Q_{j,p}}}{\abs{\innerprod{f}{\psi_{j,k,m}}} > \eps}$ and
$K_j(\eps) = \{k\in\Z^2:\abssmall{k_i+2^{j(\alpha-1)/2}s_i} \le C
\cdot \eps^{-1/(\alpha+1)}\,
2^{-j\left(\frac{\alpha/4+1/2}{\alpha+1}\right)}\}$.

\medskip

{\em Case 2b}. By similar arguments as given in Case 2a, it also suffices to consider just one fixed $\hat x \in
\intt(\cQ_{j,p}) \cap
\intt(\supp(\psi_{\lambda})) \cap \partial B$. Again, our goal is now to estimate $\card{\Lambda_{j,p}(\eps)}$.

By estimate \eqref{eq:estimate2} from
Theorem~\ref{thm:decay-ca-smooth}, $|\langle
f,\psi_{\lambda}\rangle|\ge \eps$ implies
\[
C \cdot 2^{-j(\alpha/2+1/4)\alpha} \ge \eps,
\]
hence we only need to consider scales
\begin{equation*} %\label{eq:case2-thm}
0 \le  j \le j_1 +C, \quad \text{where } j_1:=\frac{4}{(1+2\alpha)\alpha} \log_2{(\eps^{-1})}.
\end{equation*}
Since $\cQ_{j,p}$  is a cube with side lengths of size $2^{-j/2}$, we
have, counting the number of translates and shearing, the estimate 
\[
\card{\Lambda_{j,p}} \le C \cdot 2^{j3(\alpha-1)/2},
\]
for some $C$. It then obviously follows that
\begin{equation*}
%\label{eq:estimateLambda_jp}
\card{\Lambda_{j,p}(\eps)} \le C \cdot 2^{j3(\alpha-1)/2}.
\end{equation*}
Notice that this last estimate is exceptionally crude, but it will be
sufficient for the sought estimate.

\medskip

We now combine the estimates for $\card{\Lambda_{j,p}(\eps)}$ derived in
Case 2a and Case 2b. We first consider $\alpha<2$. Since
\[
\card{\cQ_j} \leq C\cdot 2^{j}.
\]
 we have, %for $\alpha \in \itvoo{1}{2}$.
\begin{align}
  \card{\Lambda(\eps)} &\lesssim
  \sum_{j=0}^{\frac{2}{3\alpha-1}j_0}
  2^j \, 2^{j3(\alpha-1)/2} +
   \sum_{j=\frac{2}{3\alpha-1}j_0 }^{j_0}
   2^j \eps^{-3/(\alpha+1)}\, 2^{j\frac{3\alpha/4+3/2}{\alpha+1}} +
 %  \nonumber \\ &\phantom{\,=\,}+
   \sum_{j=0}^{j_1} 2^j\,
   2^{j3(\alpha-1)/2} \nonumber \\ 
  &\lesssim
  \sum_{j=0}^{\frac{2}{3\alpha-1}j_0}
  2^{j(3\alpha-1)/2} +
   \eps^{-3/(\alpha+1)}\!\!\!\!\! \sum_{j=\frac{2}{3\alpha-1}j_0 }^\infty
    2^{-j\left(\frac{2-\alpha}{4(\alpha+1)}\right)} +
    \sum_{j=0}^{j_1}
   2^{j(3\alpha-1)/2} \nonumber \\
&\lesssim \eps^{\frac{4}{\alpha+2}}
   + \eps^{-3/(\alpha+1)}
   \eps^{\frac{2(2-\alpha)}{(\alpha+1)(\alpha+2)(3\alpha-1)}} +\eps^{-\frac{2(3\alpha-1)}{2(2\alpha+1)}}
\lesssim  \eps^{-\frac{9\alpha^2+17\alpha-10}{(\alpha+1)(\alpha+2)(3\alpha-1)}} 
 \label{eq:totalnumber}.
\end{align}

Having estimated $\card{\Lambda(\eps)}$, we are now ready to prove our main claim. For this, set $N
= \card{\Lambda(\eps)}$, \ie $N$ is the total number of shearlets $\psi_{\lambda}$ such that the
magnitude of the corresponding shearlet coefficient $\langle f,\psi_{\lambda} \rangle$ is larger
than $\eps$. By \eqref{eq:totalnumber}, it follows that
\[
\eps \lesssim 
N^{-\frac{(\alpha+1)(\alpha+2)(3\alpha-1)}{9\alpha^2+17\alpha-10}}  .
\]
This implies that
\[
\norm[L^2]{f-f_N}^2 \lesssim \sum_{n > N}|c(f)^*_{n}|^2 \lesssim
N^{-\frac{2(\alpha+1)(\alpha+2)(3\alpha-1)}{9\alpha^2+17\alpha-10}+1} 
= 
N^{-\frac{6\alpha^3+7\alpha^2-11\alpha+6}{9\alpha^2+17\alpha-10}}, 
\]
which, in turn, implies
\[
|c(f)_N^*| \le C \cdot N^{-\frac{(\alpha+1)(\alpha+2)(3\alpha-1)}{9\alpha^2+17\alpha-10}}.
\]
Summarising, we have proven (\ref{eq:3d-approx-rate}) and (\ref{eq:3d-coeff-decay-rate}) for $\alpha \in
\itvoo{1}{2}$. The case $\alpha=2$ follows similarly. This completes the proof of
Theorem~\ref{thm:3d-opt-sparse}.

\section{Proof of Theorem~\ref{thm:3d-opt-sparse-piecewise}}
\label{sec:proof-theorem-2}

We now allow the discontinuity surface $\partial B$ to be piecewise $C^\alpha$-smooth, that is, $B
\in \mathit{STAR}^\alpha(\nu,L)$. In this case $B$ is a bounded subset of $\itvcc{0}{1}^3$ whose
boundary $\partial B$ is a union of finitely many pieces $\pa B_1, \dots, \pa B_L$ which do not
overlap except at their boundaries. If two patches $\pa B_i$ and $\pa B_j$ overlap, we will denote
their comment boundary $\pa \Gamma_{i,j}$ or simply $\pa \Gamma$. We need to consider four new
subcases of Case 2:

\begin{description}
\item[Case 2c.] The support of $\psi_{\lambda}$ intersects two $C^\alpha$ discontinuity surfaces $\partial B_1$ and
  $\partial B_2$, but stays away from the 1D edge curve $\pa \Gamma_{1,2}$, where the two patches
  $\partial B_1$, $\partial B_2$ meet.
\item[Case 2d.] The support of $\psi_{\lambda}$ intersects two $C^\alpha$ discontinuity surfaces
  $\partial B_1$, $\partial B_2$ and the 1D edge curve $\pa \Gamma_{1,2}$, where the two patches
  $\partial B_1$, $\partial B_2$ meet.
\item[Case 2e.] The support of $\psi_{\lambda}$ intersects finitely many (more than two)
  $C^\alpha$ discontinuity surfaces $\partial B_1,\dots,\partial B_L$, but stays away from a point where
  all of the surfaces $\partial B_1,\dots,\partial B_L$ meet.
\item[Case 2f.] The support of $\psi_{\lambda}$ intersects finitely many (more than two)
  $C^\alpha$ discontinuity surfaces $\partial B_1,\dots,\partial B_L$ and a point where all of the
  surfaces $\partial B_1,\dots,\partial B_L$ meet.
\end{description}

In the following we prove that these new subcases will not destroy the optimal sparse approximation
rate by estimating $\card{\Lambda(\eps)}$ for each of the cases. Here, we assume that each patch
$\partial B_i$ is parametrized by $C^{\alpha}$ function $E_i$ so that
\[
\partial B_i = \{(x_1,x_2,x_3) \in \R^3 : x_1 = E_i(x_2,x_3) \}
\]
and $\norm[C^1]{E_i}\le C$.
The other cases are proved similarly. Also, for each case, we let $\cQ_{j,p}$ be the collection of
the dyadic boxes containing the relevant surfaces $\partial B_i$ and may assume $p = (0,0,0)$
without loss of generality. Finally, we assume $\supp{\psi} \subset [0,1]^3$ for simplicity and the
same proof with rescaling can be applied to cover the general case. We now estimate
$\card{\Lambda(\eps)}$ to show the optimal sparse approximation rate in each case. For this, we
compute the number of all relevant shearlets $\psi_{j,k,m}$ in each of the dyadic boxes $\cQ_{j,p}$
applying a counting argument as in Section~\ref{sec:proof-theorem-1} and estimate the decay rate of
the shearlets coefficients $\langle f,\psi_{j,k,m}\rangle$.

\smallskip

\begin{figure}[ht]
\centering
\includegraphics{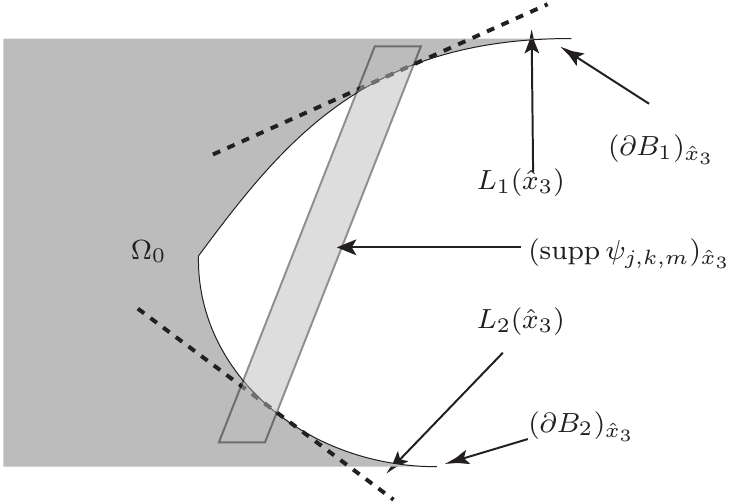}
\caption{\emph{Case 2c}. A 2D cross sections of $\supp \psi_{\lambda}$ and the two discontinuity
  surfaces $\pa B_1$ and $\pa B_2$.}
\label{fig:shear_2c}
\end{figure}
\paragraph{Case 2c} Without loss of generality, we may assume that $(\hat x_1, \hat x_2, 0)$ and
$(\hat x'_1, \hat x'_2, 0)$ belong to $\partial B_1 \cap \supp{\psi_{j,k,m}} \cap \cQ_{j,p}$ and
$\partial B_2 \cap \supp{\psi_{j,k,m}} \cap \cQ_{j,p}$ respectively for some $\hat x_1, \hat x_2,
\hat x'_1, \hat x'_2 \in \R$. Note that for a shear index $k = (k_1,k_2)$ and scale $j\ge 0$
fixed, we have by a simple counting argument that
\begin{multline}\label{eq:count1}
 \card{\bigcap_{i=1}^{2} \{m \in \Z^3 : \intt(\supp{\psi_{j,k,m}}) \cap \partial B_i \cap
   \cQ_{j,p} \neq \emptyset \} } \\
\leq C\min_{i=1,2}\set{|k_i+2^{j(\alpha-1)/2}s_i|+1}
\end{multline}
where $s_1 = \partial^{(1,0)} E_1 (\hat x_2, 0)$ and $s_2 = \partial^{(0,1)} E_2 (\hat x'_2, 0)$.
For each $\hat x_3 \in [0,2^{-j/2}]$, we define the 2D slice of $\supp{\psi_{j,k,m}}$  by
\[
(\supp{\psi_{j,k,m}})_{\hat x_3} = \{(x_1,x_2,\hat x_3) : (x_1,x_2,\hat x_3) \in \supp{\psi_{j,k,m}}\}. 
\]
We will now estimate the following 2D integral over $(\supp{\psi_{j,k,m}})_{\hat x_3}$
\begin{equation}\label{eq:int}
I_{j,k,m}(\hat x_3) = \int_{(\supp{\psi_{j,k,m}})_{\hat x_3}} f(x_1,x_2,\hat x_3)\psi_{j,k,m}(x_1,x_2,\hat x_3) \mathrm{d}x_1\mathrm{d}x_2.
\end{equation} 
This integral above gives us the worst decay rate when the 2D support
$(\supp{\psi_{j,k,m}})_{\hat x_3}$ meets both edge curves, see Figure~\ref{fig:shear_2c}. Therefore, we
may assume that for each $\hat x_3$ fixed, the set $(\supp{\psi_{j,k,m}})_{\hat x_3}$ intersects two
edge curves
\[
(\partial B_i)_{\hat x_3} = \{(x_1,x_2,\hat x_3) : (x_1,x_2,\hat x_3) \in \partial B_i \cap \cQ_{j,p}\} \quad \text{for} \,\, i = 1,2. 
\]
By a similar argument as in Section~\ref{sec:general-case}, one can
linearize the two curves $(\partial B_1)_{\hat x_3}$ and $(\partial B_2)_{\hat x_3}$ within
$(\supp{\psi_{j,k,m}})_{\hat x_3}$. In other words, we now replace the discontinuity curves
$(\partial B_1)_{\hat x_3}$ and $(\partial B_2)_{\hat x_3}$ by
\[
L_i(\hat x_3) = \{ (s_i(\hat x_3)(x_2-\hat x_2)+\hat x_1, x_2, \hat x_3) \in \cQ_{j,p}\cap (\supp{\psi_{j,k,m}})_{\hat x_3} : x_2 \in \R \} 
\]
where 
\[
s_i(\hat x_3) = \frac{\partial E_i(\hat x_2, \hat x_3)}{\partial x_2} \qquad \text{for some} \,\, (\hat x_1, \hat x_2, \hat x_3) \in (\partial B_i)_{\hat x_3} \,\, \text{and}\,\,  i = 1,2.
\]
Further, we may assume that the tangent lines $L_i(\hat x_3)$ on $(\supp{\psi_{j,k,m}})_{\hat x_3}$
do not intersect each other. In particular, one can take secant lines instead of the tangent lines
if necessary. The truncation error for the linearization with the secant line instead of
linearization with the tangent line would not change our estimates for $\card{\Lambda(\eps)}$.
Now, on each 2D support $(\supp{\psi_{j,k,m}})_{\hat x_3}$, we have a 2D piecewise smooth function
\[
f(x_1,x_2,\hat x_3) = f_0(x_1,x_2,\hat x_3) \chi_{\Omega_0} + f_1(x_1,x_2,\hat x_3) \chi_{\Omega_1}
\]
where $f_0,f_1 \in C^{\beta}$ and $\Omega_0,\Omega_1$ are disjoint subsets of $[0, 2^{-j/2}]^2$ as
in Figure~\ref{fig:shear_2c}. Observe that
\[f = f_0 \chi_{\Omega_0} + f_1 \chi_{\Omega_1} = (f_0-f_1)\chi_{\Omega_0}+f_1\] on $\cQ_{j,p} \cap
(\supp{\psi_{j,k,m}})_{\hat x_3}$. By Proposition~\ref{cor:main-smooth}, the optimal rate of sparse approximations can
be achieved for the smooth part $f_1$. Thus, it is sufficient to consider the first term $(f_0-f_1)
\chi_{\Omega_0}$ in the equation above. Therefore, we may assume that $f = g_0 \chi_{\Omega_0}$ with a
2D function $g_0 \in C^{\beta}$ on $\cQ_{j,p} \cap (\supp{\psi_{j,k,m}})_{\hat x_3}$.
Note that the discontinuities of the function $f$ lie on the two edge curves $L_i(\hat x_3)$ for $i=1,2$
on $\cQ_{j,p} \cap (\supp{\psi_{j,k,m}})_{\hat x_3}$. Applying the same linearized estimates as in Section~\ref{sec:hyperpl-disc}  for each of edge curves $L_i(\hat x_3)$, we obtain
\[
|I_{j,k,m}(\hat x_3)| \lesssim \max_{i=1,2} \set{\frac{2^{-j\alpha/4}}{(1+|k_1+2^{j(\alpha-1)/2}s_i(\hat x_3)|)^{\alpha+1}}}. 
\]      
By similar arguments as in (\ref{eq:universal-s}), we can replace $s_i(\hat x_3)$ by a universal
choice $s_i$ for $i=1,2$ independent of $\hat x_3$. 
Since $\hat x_3 \in [0,2^{-j/2}]$, this yields 
\begin{equation}\label{eq:decay1}
|\langle \psi_{j,k,m},f \rangle| \lesssim \max_{i=1,2} \biggl\{\frac{2^{-j\frac{\alpha+2}{4}}}{(1+|\hat k_i|)^{\alpha+1}}\biggr\},
\end{equation}
where $\hat k_i = k_1 + 2^{j(\alpha-1)/2}s_i$ for $i=1,2$ as usual. Also, we note that the number of
dyadic boxes $\cQ_{j,p}$ containing two distinct discontinuity surfaces is bounded above by
$2^{j/2}$ times a constant independent of scale $j$. Moreover, there are a total of $\lceil
2^{j\frac{\alpha-1}{2}}\rceil+1$ shear indices with respect to the parameter $k_2$. Let us define
\[
K_j(\eps) = \setprop{k_1 \in \Z}{\max_{i=1,2} \setbig{(1+|\hat k_i|)^{-(\alpha+1)}2^{-j\frac{\alpha+2}{4}}} > \eps }.  
\]
By \eqref{eq:count1} and \eqref{eq:decay1}, we have 
\[
\card{\Lambda(\eps)} \lesssim \sum_{j=0}^{\frac{4}{\alpha+2}\log{(\eps^{-1})}} 2^{j/2}2^{j\frac{\alpha-1}{2}}
\sum_{k_1 \in K_j(\eps)} \min_{i=1,2} \setbig{1+|\hat k_i|}. 
\]
Without loss of generality, we may assume $|\hat k_1| \leq |\hat k_2|$. Then 
\begin{multline*}
  \card{\Lambda(\eps)} \lesssim \sum_{j=0}^{\frac{4}{\alpha+2}\log{(\eps^{-1})}}
  2^{j/2}2^{j\frac{\alpha-1}{2}} \sum_{k_1 \in K_j(\eps)} (1+|\hat k_1|) \lesssim
  \eps^{-\frac{2}{\alpha+2}} \sum_{j=0}^{\frac{4}{\alpha+2}}
  2^{j\frac{\alpha^2-2}{2(\alpha+1)}}\lesssim \eps^{-\frac{4}{\alpha+2}}.
\end{multline*}
Letting $N = \card{\Lambda(\eps)}$, we therefore have that $\eps \lesssim N^{-\frac{\alpha+2}{4}}$. This implies that
\[
\norm[L^2]{f-f_N} \lesssim \sum_{n>N} \abs{c(f)^\ast_n}^2 \lesssim N^{-\alpha/2},
\]
and this completes the proof.  

\smallskip

\paragraph{Case 2d} Let $\partial \Gamma$ be the edge curve in which two discontinuity surfaces
$\partial B_1$ and $\partial B_2$ meet inside $\intt(\supp{\psi_{j,k,m}})$. Let us assume that the
edge curve $\partial \Gamma$ is given by $(E_1(x_2,\rho(x_2)),x_2,\rho(x_2))$ with some smooth
function $\rho \in C^{\alpha}(\R)$. The other case, $(E_1(\rho(x_3),x_3),\rho(x_3),x_3)$ can be
handled in similar way. Without loss of generality, we may assume that the edge curve $\partial
\Gamma$ passes through the origin and that $(0,0,0) \in \supp{\psi_{j,k,m}}$. Let $\kappa =
\rho'(0)$, and we now consider the case $|\kappa| \leq 1$.
\begin{figure}[ht]
\centering 
%\hspace*{2cm}
\includegraphics{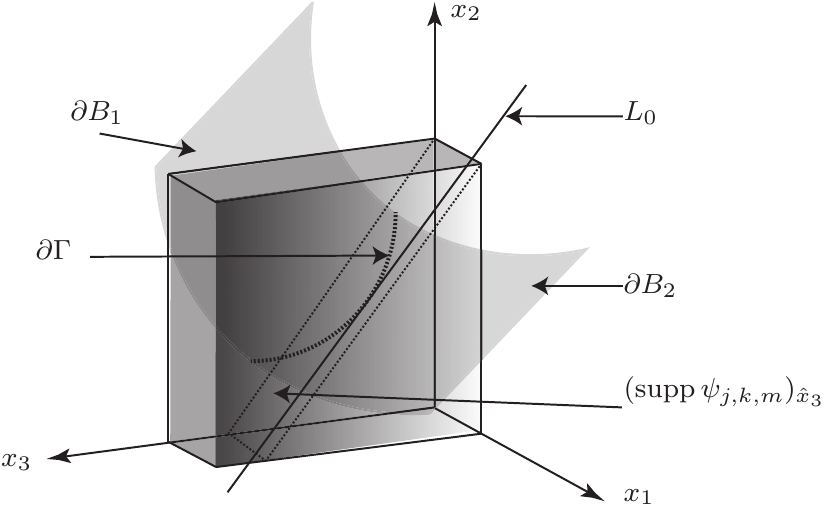}
\caption{\emph{Case 2d}. The support of $\psi_{\lambda}$ intersecting the two $C^\alpha$ discontinuity
  surfaces $\partial B_1$, $\partial B_2$ and the 1D edge curve $\pa \Gamma$, where the two patches
  $\partial B_1$ and $\partial B_2$ meet. The 2D cross section $(\supp{\psi_{j,k,m}})_{\hat{x}_3}$
  is indicated; it is seen as a tangent plane to $\pa \Gamma$.}
\label{fig:shear_2d}
\end{figure}
The other case, $\abs{\kappa} > 1$, can be handled by
switching the role of variables $x_2$ and $x_3$. Let us consider the tangent line $L_0$ to $\partial
\Gamma$ at the origin. We have
\[
L_0 : \frac{x_1}{(s_1+\kappa s_2)} = x_2 = \frac{x_3}{\kappa}, \,\,\, \text{where} \,\,
s_1 = \tfrac{\partial E_1(0,0)}{\partial x_2} \,\, \text{and} \,\, s_2 = \tfrac{\partial E_1(0,0)}{\partial x_3}.
\]
For each $\hat x_3 \in [0,2^{-j/2}]$ fixed, define 
\[
(\supp{\psi_{j,k,m}})_{\hat x_3} = \{(x_1,x_2,\kappa x_2+\hat x_3) \in \supp{\psi_{j,k,m}}:x_1,x_2 \in \R\}.
\]
Also, let 
\[
s^1_1(\hat x_3) = \tfrac{\partial E_1(\hat x_2, \hat x_3)}{\partial x_2}, \; s^1_2(\hat x_3) =
\tfrac{\partial E_1(\hat x_2, \hat x_3)}{\partial x_3}, \;
% \]
% and 
% \[
s^2_1(\hat x_3) = \tfrac{\partial E_2(\hat x'_2, \hat x_3)}{\partial x_2}, \; s^2_1(\hat x_3) = \tfrac{\partial E_2(\hat x'_2, \hat x_3)}{\partial x_2}
\]
for some $\hat x_2, \hat x'_2 \in \R$ such that 
\begin{align}\label{eq:incl-cond1}
(E_1(\hat x_2, \hat x_3), \hat x_2, \hat x_3) &\in \partial B_1 \cap (\supp{\psi_{j,k,m}})_{\hat x_3}
\\ 
\intertext{and}\label{eq:incl-cond2}
(E_1(\hat x'_2, \hat x_3), \hat x'_2, \hat x_3) &\in \partial B_2 \cap (\supp{\psi_{j,k,m}})_{\hat x_3}. 
\end{align}
%Especially, we let $s^i_{i'} = s^i_{i'}(0)$ for $i,i' = 1,2$. 
If such a point $\hat x_2$ (or $\hat x'_2$) does not exist, there will be no discontinuity curve on
$(\supp{\psi_{j,k,m}})_{\hat x_3}$ which leads to a better decay of the 2D surface integrals of the
form \eqref{eq:int}. Therefore, we may assume conditions \eqref{eq:incl-cond1} and
\eqref{eq:incl-cond1} holds for any $\hat x_3 \in [0,2^{-j/2}]$. For $k_2$ fixed, let $\hat k_1 =
(k_1+\kappa k_2)+2^{j\frac{\alpha-1}{2}}(s_1 + \kappa s_2)$. Applying a similar counting argument as
in Section~\ref{sec:proof-theorem-1}, for the shear index $k=(\hat k_1,k_2)$ fixed, we obtain an
upper bound for the number of shearlets $\psi_{j,k,m}$ intersecting $\partial \Gamma$ inside
$\cQ_{j,p}$ as follows:
\begin{equation}\label{eq:count2}
\card{\{(j,k,m) : \intt(\supp{\psi_{j,k,m}})\cap \cQ_{j,p} \cap \partial \Gamma  \neq \emptyset \}} \leq C (|\hat k_1|+1).
\end{equation} 
Notice that there exists a region $\cP$ such that the following assertions hold:
\begin{romannum}
\item $\cP$ contains $\partial \Gamma$ inside $\supp{\psi_{j,k,m}} \cap \cQ_{j,p}$. 
\item $\cP \subset \{(x_1,x_2,\kappa x_2 + t) \in \supp{\psi_{j,k,m}}: 0 \leq t \leq b\} \cap \supp{\psi_{j,k,m}}$ for some $b \ge 0$. 
\end{romannum}
Here, we choose the smallest $b$ so that (ii) holds. For each $\hat x_3 \in [0,2^{-j/2}]$ fixed, let
$H_{\hat x_3} = \{(x_1,x_2,\kappa x_2+\hat x_3) : x_1,x_2 \in \R\}$. Applying a similar argument as in
the proof of Theorem~\ref{thm:decay-hyperplane} to each of the 2D cross sections $\cP \cap H_{\hat x_3}$ of $\cP$, we
obtain
\begin{equation}\label{eq:pararell}
\vol{\cP} \lesssim 2^{-j\frac{\alpha}{2}} \Bigl(\frac{1}{|\hat k_1| 2^{j/2}}\Bigr)^{\alpha+1}.
\end{equation}
Figure~\ref{fig:shear_2d} shows the 2D cross section of $\cP$. Let us now estimate the decay rate of
shearlet coefficients $\langle f,\psi_{j,k,m}\rangle$. Using \eqref{eq:pararell},
\begin{eqnarray}
\Bigl|\int_{\R^3} f(x) \psi_{j,k,m}(x) \mathrm{d}x\Bigr| &\leq& \Bigl|\int_{\cP} f(x) \psi_{j,k,m}(x) \mathrm{d}x\Bigr| + \Bigl|\int_{\cP^{c}} f(x) \psi_{j,k,m}(x) \mathrm{d}x\Bigr| \nonumber
\\ &\leq& C\frac{2^{-j(\frac{3\alpha}{4})}}{(1+|\hat k_1|)^{\alpha+1}} + \Bigl|\int_{\cP^{c}} f(x)
\psi_{j,k,m}(x)\mathrm{d}x\Bigr| \label{eq:twoint}
\end{eqnarray}
Next, we compute the second integral $\int_{\cP^{c}} f(x) \psi_{j,k,m}(x)\mathrm{d}x$ in
\eqref{eq:twoint}. For each $\hat x_3 \in [0,2^{-j/2}]$, define
\[
(\supp{\psi_{j,k,m}})_{\hat x_3} = H_{\hat x_3} \cap \supp{\psi_{j,k,m}} \cap \cP^{c}.
\] 
Again, we assume that on each 2D cross section $(\supp{\psi_{j,k,m}})_{\hat x_3}$ there are two edge
curves $\partial B_1 \cap H_{\hat x_3}$ and $\partial B_2 \cap H_{\hat x_3}$ since we otherwise
could obtain a better decay rate of $\langle f,\psi_{j,k,m}\rangle$. As we did in the previous case,
we compute the 2D surface integral $I_{j,k,m}(\hat x_3)$ over the cross section
$(\supp{\psi_{j,k,m}})_{\hat x_3}$ defined as in \eqref{eq:int}. Applying a similar linearization
argument as in Section~\ref{sec:general-case}, we can now replace the two edge curves $\partial B_i
\cap H_{\hat x_3}$ for $i=1,2$ by two tangent lines as follows:
\[
L_1(\hat x_3) = \setpropsmall{((s^1_{1}(\hat x_3)+\kappa s^1_2(\hat x_3))x_2+\hat x_1, x_2 + \hat x_2, \kappa x_2 + \hat x_3) \in \R^3}{x_2 \in \R}
\]
and
\[
L_2(\hat x_3) = \setpropsmall{((s^2_{1}(\hat x_3)+\kappa s^2_2(\hat x_3))x_2+\hat x'_1, x_2 + \hat
  x'_2, \kappa x_2 + \hat x_3) \in \R^3}{x_2 \in \R}.
\]
Here, the points $\hat x_1, \hat x_2, \hat x'_1$, and $\hat x'_2$ are defined as
in~(\ref{eq:incl-cond1}) and (\ref{eq:incl-cond2}), and we
may assume that the two lines $L_1(\hat x_3)$ and $L_2(\hat x_3)$ do not intersect each other within
$(\supp{\psi_{j,k,m}})_{\hat x_3}$; otherwise, we can take secant lines instead as argued in the previous
case. Let $\cQ_{\hat x_3}$ be the projection of $(\supp{\psi_{j,k,m}})_{\hat x_3}$ onto the $x_1x_2$
plane. By the assumptions on $\psi$, we have
\begin{eqnarray*}
I_{j,k,m}(\hat x_3) &=& \sqrt{1+{\kappa^2}}\int_{{\cQ}_{\hat x_3}} f(x_1,x_2,\kappa x_2 + \hat
x_3)\psi_{j,k,m}(x_1,x_2,\kappa x_2 + \hat x_3) \mathrm{d}x_2 \mathrm{d}x_1 
\\ &=& 2^{j\frac{\alpha+2}{4}}\sqrt{1+{\kappa^2}}\int_{{\cQ}_{\hat x_3}}f(x_1,x_2,\kappa x_2 + \hat x_3) 
\\ &&
\phantom{\sqrt{1+{\kappa^2}}} g^0_{\kappa,2^{j/2}\hat x_3}
\Bigl(2^{j\alpha/2} x_1+ 2^{j/2}(k_1+k_2\kappa)x_2+ 2^{j/2} k_2 \hat
x_3, 2^{j/2}x_2 \Bigr) \mathrm{d}x_2\mathrm{d}x_1
\end{eqnarray*}
The integral above is of the same type as in \eqref{eq:hp-split-into-I_l} except for the $\hat x_3 $
translation parameter. The function $f(x_1,x_2,\kappa x_2 + \hat x_3)$ has singularities lying on
the projection of the lines $L_1(\hat x_3)$ and $L_2(\hat x_3)$ onto the $x_1x_2$ plane which do not
intersect inside $\intt(\cQ_{\hat x_3})$. Therefore, we can apply the linearized estimate as in the proof
of Theorem~\ref{thm:decay-hyperplane} and obtain
\begin{equation*}
|I_{j,k,m}(\hat x_3)| \leq C \max_{i=1,2} \set{2^{-j\frac{\alpha}{4}}{\Bigl(1+|(k_1+\kappa k_2)+2^{j\frac{\alpha-1}{2}}(s^i_1(\hat x_3)+\kappa s^i_2(\hat x_3))|\Bigr)^{-\alpha-1}}}.
\end{equation*}
By a similar argument as in (\ref{eq:universal-s}), we can now replace $s^i_{i'}(\hat x_3)$ by  universal choices
$s_i$ for $i,i'=1,2$ respectively, in the equation above. This implies
\begin{equation}\label{eq:2ndint}
\Bigl| \int_{\cP^c} f(x) \psi_{j,k,m}(x) \mathrm{d}x \Bigr| \leq C \frac{2^{-j\frac{\alpha+2}{4}}}{(1+|\hat k_1|)^{\alpha+1}}. 
\end{equation}
Therefore, from \eqref{eq:twoint}, \eqref{eq:2ndint}, we obtain 
\begin{equation}\label{eq:decay2}
|\langle f,\psi_{j,k,m}\rangle| \leq C \frac{2^{-j\frac{\alpha+2}{4}}}{(1+|\hat k_1|)^{\alpha+1}}. 
\end{equation}
In this case, the number of all dyadic boxes $\cQ_{j,p}$ containing two distinct discontinuity
surfaces is bounded above by $2^{j/2}$ up to a constant independent of scale $j$, and 
there are shear indices $\lceil 2^{j\frac{\alpha-1}{2}}\rceil+1$ with respect to $k_2$. 
Let us define 
\[
K_j(\eps) = \setprop{k_1 \in \Z }{ (1+|\hat k_1|)^{-(\alpha+1)}2^{-j\frac{\alpha+2}{4}} > \eps }.  
\]
Finally, we now estimate $\card{\Lambda(\eps)}$ using \eqref{eq:count2} and \eqref{eq:decay2}. 
\[
\card{\Lambda(\eps)} \leq C \sum_{j=0}^{\frac{4}{\alpha+2}\log{(\eps^{-1})}} 2^{j\frac{\alpha-1}{2}}
\, 2^{j/2} \sum_{k_1 \in K_j(\eps)} (1+|\hat k_1|) \leq C \eps^{-\frac{4}{\alpha+2}}
\]
which provides the sought approximation rate. \\

\paragraph{Case 2e} In this case, we assume that $f = f_0 \chi_{\Omega_0} + f_1 \chi_{\Omega_1}$
with $f_0,f_1 \in C^{\beta}$, and that there are $L$ discontinuity surfaces
$\partial B_1, \dots \partial B_L$ inside $\intt(\supp{\psi_{j,k,m}})$ so that each of the
discontinuity surfaces is parametrized by $x_1 = E_{i}(x_2,x_3)$ with $E_i \in C^{\alpha}$ for
$i=1,\dots,L$. For each $\hat x_3 \in \itvcc{0}{2^{-j/2}}$, let us consider the 2D support
\[(\supp{\psi_{j,k,m}})_{\hat x_3} = \{(x_1,x_2,\hat x_3) \in \supp{\psi_{j,k,m}} : x_1,x_2 \in \R\}.\] 
On each 2D slice $(\supp{\psi_{j,k,m}})_{\hat x_3}$, let 
\[\partial \Gamma^{i}_{\hat x_3} = (\supp{\psi_{j,k,m}})_{\hat x_3} \cap \partial B_i \qquad \text{for} \,\, i = 1,\dots, L.\] 
Observe that there are at most two distinct curves $\partial \Gamma^i_{\hat x_3}$ and $\partial
\Gamma^{i'}_{\hat x_3}$ on $(\supp{\psi_{j,k,m}})_{\hat x_3}$ for some $i,i' = 1,\dots, L$. We can
 assume that there are such two edge curves $\partial \Gamma^1_{\hat x_3}$ and $\partial
\Gamma^2_{\hat x_3}$ for each $\hat x_3 \in [0,2^{-j/2}]$ since we otherwise could obtain better decay
rate of the shearlet coefficients $|\langle f,\psi_{j,k,m}\rangle|$. From this, we may assume that for
each $\hat x_3$, there exist $(\hat x_1, \hat x_2, \hat x_3)$ and $(\hat x'_1,\hat x'_2,\hat x_3)
\in \intt(\supp{\psi_{j,k,m}})$ such that $(\hat x_1, \hat x_2, \hat x_3) \in \partial
\Gamma^1(\hat x_3)$ and $(\hat x'_1, \hat x'_2, \hat x_3) \in \partial \Gamma^{2}(\hat x_3)$. We
then set:
\[
s^1_1(\hat x_3) = \tfrac{\partial E_{1}(\hat x_1, \hat x_2)}{\partial x_2}
\qquad \text{and} \qquad 
s^{2}_1(\hat x_3) = \tfrac{\partial E_{2}(\hat x_1, \hat x'_2)}{\partial x_2}.
\]
Applying a similar linearization argument as in Section~\ref{sec:general-case}, we can replace the two edge curves by two
tangent lines (or secant lines) as follows:
\[
L^1(\hat x_3) = \setprop{(s^1_1(\hat x_3)x_2+\hat x_1,x_2+\hat x_2,\hat x_3)}{ x_2 \in \R}
\]
and
\[
L^{2}(\hat x_3) = \setprop{(s^{2}_1(\hat x_3)x_2+\hat x'_1,x_2+\hat x'_2,\hat x_3)}{ x_2 \in \R}.
\]
Here, we may assume that the two tangent lines $L^1(\hat x_3)$ and $L^{2}(\hat x_3)$ do not intersect
inside $(\supp{\psi_{j,k,m}})_{\hat x_3} \cap \cQ_{j,p}$ for each $\hat x_3$. In fact, the number of
shearlet supports $\psi_{j,k,m}$ intersecting $\cQ_{j,p}\cap \partial B_1 \cap \dots \cap \partial B_L$, so
that there are two tangent lines $L^1(\hat x_3)$ and $L^{2}(\hat x_3)$ meeting  each other inside
$(\supp{\psi_{j,k,m}})_{\hat x_3}$ for some $\hat x_3$, is bounded by some constant $C$ independent
of scale $j$. Those shearlets $\psi_{j,k,m}$ are covered by Case~2f, and  we may therefore simply
ignore those shearlets in this case. Using a similar argument as in the estimate of
(\ref{eq:hp-split-into-I_l}), one can then estimate $I_{j,k,m}(\hat x_3)$ defined as in \eqref{eq:int} as follows:
\[
I_{j,k,m}(\hat x_3) \leq C \min_{i = 1,2} \set{\frac{2^{-j\frac{\alpha}{4}}}{(1+|k_1+2^{j\frac{\alpha-1}{2}}s^i_1(\hat x_3)|)^{\alpha+1}}}.
\]
Again, applying similar arguments as in~(\ref{eq:universal-s}), we may replace the slopes $s^i_1(\hat x_3)$ and $s^{i'}_1(\hat x_3)$ by universal choices $s^i_1(0)$ and $s^{i'}_1(0)$, respectively. 
This gives 
\begin{equation}\label{eq:decay3}
|\langle f,\psi_{j,k,m}\rangle| \leq C \max_{i=1,\dots,L} \set{ \frac{2^{-j\frac{\alpha+2}{4}}}{(1+|\hat k^{i}_1|)^{\alpha+1}}},
\end{equation}
where $\hat k^i_1 = s^i_1(0)2^{j\frac{\alpha-1}{2}}+k_1$ for $i = 1,\dots,L$.
Further, applying a similar counting argument as in Section~\ref{sec:proof-theorem-1}, for $k =
(k_1,k_2)$ and $j\ge 0$ fixed, we have 
\begin{multline}\label{eq:count3}
\card{\set{(j,k,m)}{\intt(\supp{\psi_{j,k,m}})\cap\partial B_1 \cap \dots \cap \partial B_L \cap
    \cQ_{j,p} \neq \emptyset }} \\ \leq C \min_{i=1,\dots,L}\setbig{1+|\hat k^i_1|}.
\end{multline}
In this case, the number of all dyadic boxes $\cQ_{j,p}$ containing more than two distinct discontinuity surfaces is bounded by some constant independent of scale $j$, and 
there are $\lceil 2^{j\frac{\alpha-1}{2}}\rceil+1$ shear indices with respect to $k_2$. 
Let us define 
\[
K_j(\eps) = \setprop{k_1 \in \Z}{ \max_{i=1,\dots,L}\setbig{(1+|\hat k^i_1|)^{-(\alpha+1)}2^{-j\frac{\alpha+2}{4}}} > \eps }.  
\]
Finally, using \eqref{eq:decay3} and \eqref{eq:count3}, we see that
\[
|\Lambda(\eps)| \leq C \sum_{j=0}^{\frac{4}{\alpha+2}\log{(\eps^{-1})}} 2^{j\frac{\alpha-1}{2}} \sum_{k_1 \in K_j(\eps)} \min_{i=1,\dots,L}\setbig{1+|\hat k^i_1|} \leq C \eps^{-\frac{2}{\alpha+4}}.
\]
This proves Case 2e.

\paragraph{Case 2f} In this case, since the total number of shear parameters $k = (k_1,k_2)$ is
bounded by a constant times $2^j$ for each $j\ge 0$, it follows that
\[
\card{\Lambda_{j,p}(\eps)} \leq C \cdot 2^{j}.
\]
Since there are only finitely many corner points with its number not depending on scale $j \ge 0$, we have
\[
\card{\Lambda(\eps)} \leq C \cdot \sum_{j=0}^{\frac{4}{\alpha+2}\log_2{(\eps^{-1})}} 2^{j} \leq C \cdot \eps^{-\frac{4}{\alpha+2}},
\]
which, in turn, implies the optimal sparse approximation rate for Case~2f. This completes the proof of
Theorem~\ref{thm:3d-opt-sparse-piecewise}.

\section{Extensions}
\label{sec:extensions}

\subsection{Smoothness parameters $\alpha$ and $\beta$} %Limitations}
Our 3D image model class $\cE_\alpha^\beta(\R^3)$ depends primarily of the two parameters $\alpha$
and $\beta$. 
The particular choice of scaling matrix is essential for the nearly optimal approximation results in
Section~\ref{sec:optimal-sparsity-3d}, but any choice of scaling matrix basically only allows us to
handle one parameter. This of course poses a problem if one seeks optimality results for all
$\alpha, \beta \in \itvoc{1}{2}$. We remark that our choice of scaling matrix exactly ``fits'' the
smoothness parameter of the discontinuity surface $\alpha$, which exactly is the crucial parameter when
$\beta \ge \alpha$ as assumed in our optimal sparsity results. It is unclear whether one can
circumvent the problem of having ``too'' many parameters,
and thereby prove sparse approximation results as in Section~\ref{sec:optimal-sparsity-3d} for the
case $\beta<\alpha \le 2$.

For $\alpha > 2$ we can, however, not expect shearlet systems
$\SH{(\phi,\psi,\tilde\psi,\breve\psi)}$ to deliver optimal sparse approximations. The heuristic
argument is as follows. For simplicity let us only consider shearlet elements associated with the
pyramid pair $\cP$. Suppose that  the discontinuity surface is $C^2$. Locally we can assume the surface
will be of the form $x_1 = E(x_2,x_3)$ with $E \in C^2$. 
Consider a Taylor expansion of $E$ at $(x_2',x_3')$: 
 \begin{multline}\label{eq:taylor-exp-dB}
  E(x_2,x_3) = E(x_1',x_2') + \begin{pmatrix} \pa^{(1,0)}E(x_1',x_2') &
    \pa^{(0,1)} E(x_1',x_2')
  \end{pmatrix}
  \begin{pmatrix}
    x_2 \\ x_3
  \end{pmatrix}\\
  + \begin{pmatrix}x_2 & x_3
  \end{pmatrix}
  \begin{pmatrix}
    \pa^{(2,0)} E(\xi_1,\xi_2) & \pa^{(1,1)} E(\xi_1,\xi_2)\\
    \pa^{(1,1)} E(\xi_1,\xi_2) & \pa^{(0,2)} E(\xi_1,\xi_2)
  \end{pmatrix}
  \begin{pmatrix}
    x_2 \\ x_3
  \end{pmatrix}.
\end{multline}
Intuitively, we need our shearlet elements $\psi_{j,k,m}$ to capture the geometry of $\pa B$. For
the term $E(x_1',x_2')$ we use the translation parameter $m \in \Z^3$ to locate the shearlet element
near the expansion point $p:=(E(x_1',x_2'),x_2',x_3')$. Next, we ``rotate'' the element
$\psi_{j,k,m}$ using the sharing parameter $k \in \Z^2$ to align the shearlet normal with the normal
of the tangent plane of $\pa B$ in $p$; the direction of the tangent is of course governed by
$\pa^{(1,0)}E(x_1',x_2')$ and $\pa^{(0,1)} E(x_1',x_2')$. Since the last parameter $j \in \N_0$ is a
multi-scale parameter, we do not have more parameters available to capture the geometry of $\pa B$.
Note that the scaling matrix $A_{2^j}$ can, for $\alpha =2$, be written as
\[ A_{2^j} =
\begin{pmatrix}
  2^{j} & 0 & 0 \\
 0 & 2^{j/2} & 0 \\
 0 & 0 & 2^{j/2} 
\end{pmatrix} = \begin{pmatrix}
  2 & 0 & 0 \\
 0 & 2^{1/2} & 0 \\
 0 & 0 & 2^{1/2} 
\end{pmatrix}^j.
\] 
The shearlet element will therefore have support in a parallelopiped
with side lengths $2^{-j}$ , $2^{-j/2}$ and $2^{-j/2}$ in directions of the
$x_1$, $x_2$, and $x_3$ axis, respectively. Since
\[ \abs{x_2x_3} \le 2^{-j}, x_2^2 \le 2^{-j}, \text{ and } x_3^2 \le 2^{-j}, \] for
$\abs{x_2},\abs{x_3} \le 2^{-j/2}$, we see that the paraboliodal scaling gives shearlet elements of
a size that exactly fits the Hermitian term in (\ref{eq:taylor-exp-dB}). If $\pa B\in C^\alpha$ for $1
< \alpha \le 2$, that is, $E\in C^\alpha$ for $1 < \alpha \le 2$, we in a similar way see that our
choice of scaling matrix exactly fits the last term in the corresponding Taylor expansion. Now, if
the discontinuity surface is smoother than $C^2$, that is, $\pa B \in C^\alpha$ for $\alpha > 2$,
say $\pa B \in C^3$, we could include one more term in the Taylor expansion
(\ref{eq:taylor-exp-dB}), but we do not have any more free parameters to adapt to this increased
information. Therefore, we will arrive at the same (and now non-optimal) approximation rate as for
$\pa B\in C^2$. We conclude that for $\alpha >2$ we will need representation systems with not only a
directional characteristic, but also some type of curvature characteristic.

For $\alpha <1$, we do not have proper directional information about the anisotropic discontinuity,
in particular, we do not have a tangential plane at every point on the discontinuity surface. This
suggests that this kind of anisotropic phenomenon should not be investigated with \emph{directional}
representation systems. For the boarder-line case $\alpha =1$, our analysis shows that wavelet
systems should be used for sparse approximations.

\subsection{Needle-like shearlets}
\label{sec:needle-like-shearl}

In place of $A_{2^j}=\diag{(2^{\alpha j/2}, 2^{j/2}, 2^{j/2})}$, one could also
use the scaling matrix $A_{2^j}=\diag{(2^{j\scp/2}, 2^{j\scp/2}, 2^{j/2})}$ with similar changes for
$\tilde A_{2^j}$ and $ \breve A_{2^j}$. This would lead to needle-like shearlet elements instead of
the plate-like elements considered in this paper. As Theorem~\ref{thm:3d-opt-sparse-piecewise} in
Section~\ref{sec:sparse-appr-3d} showed, the plate-like shearlet systems are able to deliver almost
optimal sparse approximation even in the setting of cartoon-like images with certain types of 1D
singularities. This might suggest that needle-like shearlet systems are not necessary, at least not
for sparse approximation issues. Furthermore, the tiling of the frequency space becomes increasingly
complicated in the situation of needle-like shearlet systems which yields frames with less favorable
frame constants. However, in non-asymptotic analyses, \eg image separation, a combined needle-like
and plate-like shearlet system might be useful.

\subsection{Future work} %/Open problems [[other name?]]}
For $\alpha<2$, the obtained approximation error rate is only near-optimal since it differs by
$\tau(\alpha)$ from the true optimal rate. It is unclear whether one can get rid of the
$\tau(\alpha)$ exponent (perhaps replacing it with a poly-$\log$ factor) by using better estimates
in the proofs in Section~\ref{sec:discontinuity}. More general, it is also future work to determine
whether shearlet systems with $\alpha, \beta \in \itvoc{1}{2}$ provide nearly or truly optimal
sparse approximations of all $f \in \cE_\scp^\beta(\R^3)$. To answer this question, one would,
however, need to develop a completely new set of techniques. This would mean that the
approximation error would decay as $O(N^{-\min\{\scp/2,2\beta/3\}})$ as $N \to \infty$, perhaps with
additional poly-log factors or a small polynomial factor.

\bigskip 

\begin{footnotesize}
  \paragraph{\bf Acknowledgements} The first and third author acknowledge
  support from DFG Grant SPP-1324, KU 1446/13. The first author also
  acknowledges support from DFG Grant KU 1446/14.
\end{footnotesize}

\appendix

\section{Estimates}
\label{sec:estimates}
The following estimates are used repeatedly in Section~\ref{sec:constr-comp-supp} and
follows by direct verification. For $t = 2^{-m}$, \ie $-\log_2 t=m$, $m \in \N_0 := \N \cup \{0\}$,
we have
\begin{align*}
  \sum_{\setprop{j\in\N_0}{ 2^{-j} \ge t}} (2^{-j})^{-\iota} = \sum_{j=0}^{{-\log_2 t}}
  (2^\iota)^j = \frac{t^{-\iota}-2^{-\iota}}{1-2^{-\iota}} \qquad \text{for $\iota \neq 0$,}\\
  \sum_{\setprop{j\in\N_0}{ 2^{-j} \le t}} (2^{-j})^{\iota} = \sum_{j={-\log_2 t}}^\infty
  (2^{-\iota})^j = \frac{t^{\iota}}{1-2^{-\iota}} \qquad \text{for $\iota > 0$,}
\end{align*}
For $t \in \itvoc{0}{1}$, we have $\ceil{-\log_2 t} \in \N_0$ and therefore
\begin{align}
  \sum_{\setprop{j\in\N_0}{ 2^{-j} \ge t}} (2^{-j})^{-\iota} = \sum_{j=0}^{\floor{-\log_2 t}}
  (2^\iota)^j \le \frac{t^{-\iota}-2^{-\iota}}{1-2^{-\iota}} \qquad
  \text{for $\iota > 0$,} \label{eq:quotientsum-finite}\\
  \sum_{\setprop{j\in\N_0}{ 2^{-j} \le t}} (2^{-j})^{\iota} = \sum_{j=\ceil{-\log_2 t}}^\infty
  (2^{-\iota})^j \le \frac{t^{\iota}}{1-2^{-\iota}} \qquad \text{for
    $\iota > 0$,} \label{eq:quotientsum-inf}
\end{align}
where we have used that $2^{\floor{-\log_2 t}}\le t^{-1}$ and
$2^{-\ceil{-\log_2 t}} = 2^{\floor{\log_2 t}} \le t$. For $t>1$ we
finally have that
\begin{equation}
  \sum_{\setprop{j\in\N_0}{ 2^{-j} \ge t}} (2^{-j})^{-\iota} = 0
\quad \text{and} \quad \sum_{\setprop{j\in\N_0}{ 2^{-j} \le t}} (2^{-j})^{\iota} =
\sum_{j=0}^\infty (2^{-\iota})^{j} = \frac{1}{1-2^{-\iota}}. \label{eq:quotientsum-t-big}
\end{equation}

\section{Proof of Proposition~\ref{prop:estimateR}}
\label{sec:proof-proposition-Rc}
We start by estimating $\Gamma(2\omega)$, and will use this later to derive the claimed upper
estimate for $R(c)$. For brevity we will use
$K_j:=\itvcc{-\ceilsmall{2^{j(\scp-1)/2}}}{\ceilsmall{2^{j(\scp-1)/2}}}$ and $k \in K_j$ to mean
$k_1, k_2 \in K_j$. By definition it then follows that
\begin{multline*}
  \Gamma(2\omega_{1},2\omega_{2},2\omega_3) \\
  \le \esssup_{\xi \in {\R^{3}}} \; \sum_{j \ge 0} \sum_{k \in
    K_j} \abs{\hat{\psi}\left(2^{-j\scp/2}
      \xi_{1}, k_12^{-j\scp/2}\xi_{1}+2^{-j/2}\xi_{2},
      k_22^{-j\scp/2}\xi_{1}+2^{-j/2}\xi_{3}\right)} \\
  \cdot  \abs{\hat{\psi}\left(2^{-j\scp/2}\xi_{1} + 2\omega_{1},
      k_12^{-j\scp/2}\xi_{1}+2^{-j/2}\xi_{2}+2\omega_{2},k_22^{-j\scp/2}\xi_{1}+2^{-j/2}\xi_{3}+2\omega_3
    \right)}.
\end{multline*}
For each $(\omega_{1}, \omega_{2}, \omega_3) \in {\R^{3}}\setminus \set{0}$, we first split the sum
over the index set $\N_0$ into index sets $J_1=\setprop{j\ge0}{ \abssmall{2^{-j\scp/2}\xi_{1}} \le
  \norm[\infty]{\omega}}$ and $J_2=\setprop{j\ge0}{ \abssmall{2^{-j\scp/2}\xi_{1}} >
  \norm[\infty]{\omega}}$. We denote these sums by $I_1$ and $I_2$, respectively. In other words, we
have that
\begin{align}
  \Gamma(2\omega_{1},2\omega_{2},2\omega_3) \le \esssup\limits_{\xi
    \in {\mathbb{R}^{3}}} (I_{1}+I_{2}),
\label{eq:Gamma-esssupI}
\end{align}
where
\begin{multline*}
  I_1 =\sum_{j\in J_1} \sum_{k \in K_j}
  \absBig{\hat{\psi}\bigl(2^{-j\scp/2} \xi_{1},
      k_12^{-j\scp/2}\xi_{1}+2^{-j/2}\xi_{2},
      k_22^{-j\scp/2}\xi_{1}+2^{-j/2}\xi_{3}\bigr)} \\
  \cdot   \absBig{\hat{\psi}\bigl(2^{-j\scp/2}\xi_{1} + 2\omega_{1},
      k_12^{-j\scp/2}\xi_{1}+2^{-j/2}\xi_{2}+2\omega_{2},k_22^{-j\scp/2}\xi_{1}+2^{-j/2}\xi_{3}+2\omega_3
    \bigr)}
\end{multline*}
and 
\begin{multline*}
  I_2 =\sum_{j\in J_2} \sum_{k \in K_j}
  \abs{\hat{\psi}\left(2^{-j\scp/2} \xi_{1},
      k_12^{-j\scp/2}\xi_{1}+2^{-j/2}\xi_{2},
      k_22^{-j\scp/2}\xi_{1}+2^{-j/2}\xi_{3}\right)} \\
   \cdot \abs{\hat{\psi}\left(2^{-j\scp/2}\xi_{1} + 2\omega_{1},
      k_12^{-j\scp/2}\xi_{1}+2^{-j/2}\xi_{2}+2\omega_{2},k_22^{-j\scp/2}\xi_{1}+2^{-j/2}\xi_{3}+2\omega_3
    \right)}.
\end{multline*}

The next step consists of estimating $I_1$ and $I_2$, but we first
introduce some useful inequalities which will be needed later. Recall
that $\vap>2\dep > 6$, and $q,q',r,s$ are positive constants
satisfying $q',r,s \in \itvoo{0}{q}$. Further, let $\dep'' =
\dep-\dep'$ for an arbitrarily fixed $\dep'$ satisfying $1 < \dep' <
\dep-2$. Let $\iota>\dep>3$. Then we have the following inequalities
for $x,y,z \in \R$.
\begin{equation}
  \min\{1,|qx|^{\iota}\}%\min\{1,|q'x|^{-\dep}\}
  \min\{1,|ry|^{-\dep}\} 
  \leq
  \min\{1,|qx|^{\iota-\dep}\}%\min\{1,|q'x|^{-\dep}\}
  \min\left\{1,|        (qx)^{-1}ry|^{-\dep}\right\},  \label{eq:ineq1}
\end{equation}
\begin{equation}\label{eq:ineq2}
  \min\{1,|x|^{-\dep}\}\min\left\{1,\left| \frac{1+z}{x+y}\right|^{\dep}\right\} \le
2^{\dep''}|y|^{-\dep''}\min\{1,|x|^{-\dep'}\}
  \max\{1,|1+z|^{\dep''}\},
\end{equation}
\begin{equation}\label{eq:ineq3}
  \min\{1,|qx|^{\iota-\dep}\}\min\{1,|q'x|^{-\dep}\}|x|^{\dep''} \le (q')^{-\dep''},
\end{equation}
and
\begin{equation}\label{eq:ineq4}
  \min\{1,|qx|^{\iota-\dep}\}\min\{1,|q'x|^{-\dep}\}|x|^{\dep''} \le (q')^{-\dep''}
  \min\{1,|qx|^{\iota-\dep+\dep''}\}\min\{1,|q'x|^{-\dep'}\}.
\end{equation}
We fix $\xi \in \R^3$ and start with $I_1$. By the decay assumptions
\eqref{eq:psi} on $\hat\psi$, it follows directly that
\begin{multline*} I_1 \le \sum_{j\in J_1}
  \min\set{|q2^{-j\scp/2}\xi_{1}|^{\vap},1}\min
  \set{|q'2^{-j\scp/2}\xi_{1}|^{-\dep},1} \\ \cdot \min
  \set{\abs{q(2^{-j\scp/2}\xi_{1} + 2\omega_{1})}^{\vap},1} \min\set{
    \abs{q'(2^{-j\scp/2}\xi_{1} + 2\omega_{1})}^{-\dep},1} \\
  \sum_{k_1 \in K_j}
  \min\setbig{\absbig{r(k_12^{-j\scp/2}\xi_1+ 2^{-j/2}\xi_2)}^{-\dep}}
  \min\setbig{\absbig{r(k_12^{-j\scp/2}\xi_1+
      2^{-j/2}\xi_2+2\omega_2)}^{-\dep}}  \\ \sum_{k_2 \in
    K_j} 
\min\setbig{\absbig{s(k_22^{-j\scp/2}\xi_1\!+
      2^{-j/2}\xi_3)}^{-\dep}} \min\setbig{\absbig{s(k_22^{-j\scp/2}\xi_1\!+
      2^{-j/2}\xi_3+2\omega_3)}^{-\dep}}.
\end{multline*}
Further, using inequality~\eqref{eq:ineq1} with $\iota=\vap$ and
$\iota=2\vap$ twice,
\begin{multline} I_1 \le \sum_{j\in J_1}
  \min\set{|q2^{-j\scp/2}\xi_{1}|^{\vap-2\dep},1}\min
  \set{|q'2^{-j\scp/2}\xi_{1}|^{-\dep},1} \\ \cdot \min
  \set{\abs{q(2^{-j\scp/2}\xi_{1} + 2\omega_{1})}^{\vap-2\dep},1}
  \min\set{ \abs{q'(2^{-j\scp/2}\xi_{1} + 2\omega_{1})}^{-\dep},1} \\
  \sum_{k_1 \in \Z} \min\set{\abs{\frac rq
      \left(k_1+2^{j\frac{\scp-1}{2}}\tfrac{\xi_{2}}{\xi_{1}}\right)}^{-{\dep}},1} \\
  \min\set{ \abs{\frac rq
      \left[\left(\frac{2\omega_{2}}{2^{-j\scp/2}\xi_{1}}\right)
        +\left(k_1 + 2^{j\frac{\scp-1}{2}}\tfrac{\xi_{2}}{
            \xi_{1}}\right)\right]}^{{-\dep}}
    \abs{1+\frac{2\omega_{1}}{2^{-j\scp/2}\xi_{1}}}^{\dep},1 } \\
  \sum_{k_2 \in \Z} \min\set{\abs{\frac sq
      \left(k_2+2^{j\frac{\scp-1}{2}}\tfrac{\xi_{3}}{\xi_{1}}\right)}^{-{\dep}},1}\\
  \min\set{ \abs{\frac rq
      \left[\left(\frac{2\omega_{3}}{2^{-j\scp/2}\xi_{1}}\right)
        +\left(k_2 + 2^{j\frac{\scp-1}{2}}\tfrac{\xi_{3}}{
            \xi_{1}}\right)\right]}^{{-\dep}}
    \abs{1+\frac{2\omega_{1}}{2^{-j\scp/2}\xi_{1}}}^{\dep},1 },
\label{eq:pf_estimateR2}
\end{multline}
where we, \eg in the sum over $k_1$, have used paraphrases as
 \[
  \frac{r(k_12^{-j\scp/2}\xi_1+2^{-j/2} \xi_2)}{q2^{-j\scp/2}\xi_1} =\frac rq
  \left(k_1+2^{j\frac{\scp-1}{2}}\tfrac{\xi_{2}}{\xi_{1}}\right)
\] 
and
\begin{multline*}
  \frac{r(k_12^{-j\scp/2}\xi_1+2^{-j/2}\xi_2+2\omega_2)}{q(2^{-j\scp/2}\xi_1+2\omega_1)} = \frac rq
  \Biggl[\left(\frac{2\omega_{2}}{2^{-j\scp/2}\xi_{1}}\right) \\ +\left(k_1 +
      2^{j\frac{\scp-1}{2}}\tfrac{\xi_{2}}{ \xi_{1}}\right)\Biggr]
  \left(1+\frac{2\omega_{1}}{2^{-j\scp/2}\xi_{1}} \right)^{-1}.
\end{multline*}

\smallskip

We now consider the following three cases: $\norm[\infty]{\omega} =
\abs{\omega_{1}} \ge \abs{2^{-j\scp/2}\xi_{1}}$,
$\norm[\infty]{\omega} = |\omega_{2}| \ge \abs{2^{-j\scp/2}\xi_{1}}$,
and $\norm[\infty]{\omega} = |\omega_{3}| \ge
\abs{2^{-j\scp/2}\xi_{1}}$. Notice that these three cases indeed do
include all possible relations between $\omega$ and $\xi_1$.

%\smallskip

\paragraph{Case I} We assume that
$\norm[\infty]{\omega} = \abs{\omega_{1}} \ge
\abs{2^{-j\scp/2}\xi_{1}}$, hence
$\abs{2^{-j\scp/2}\xi_{1}+2\omega_{1}} \ge \abs{\omega_{1}}$. Using
the trivial estimates $\min \setsmall{\abs{q(2^{-j\scp/2}\xi_{1} +
    2\omega_{1})}^{\vap-2\dep},1}\le 1$, \[ \min\set{ \abs{\frac rq
    \left[\left(\frac{2\omega_{2}}{2^{-j\scp/2}\xi_{1}}\right)
      +\left(k_1 + 2^{j\frac{\scp-1}{2}}\tfrac{\xi_{2}}{
          \xi_{1}}\right)\right]}^{{-\dep}}
  \abs{1+\frac{2\omega_{1}}{2^{-j\scp/2}\xi_{1}}}^{\dep},1 } \le 1, \]
and analogue estimates for the sum over $k_2$, we can continue
\eqref{eq:pf_estimateR2},
\begin{multline*} I_1 \le \sum_{j\in J_1}
  \min\set{|q2^{-j\scp/2}\xi_{1}|^{\vap-2\dep},1}\min
  \set{|q'2^{-j\scp/2}\xi_{1}|^{-\dep},1} \abs{q'(2^{-j\scp/2}\xi_{1}
    + 2\omega_{1})}^{-\dep} \\ \sum_{k_1 \in \Z} \min\set{\abs{\frac
      rq
      \left(k_1+2^{j\frac{\scp-1}{2}}\tfrac{\xi_{2}}{\xi_{1}}\right)}^{-{\dep}},1}
  \sum_{k_2 \in \Z} \min\set{\abs{\frac sq
      \left(k_2+2^{j\frac{\scp-1}{2}}\tfrac{\xi_{3}}{\xi_{1}}\right)}^{-{\dep}},1}.
\end{multline*}
Our assumption $\norm[\infty]{\omega} = \abs{\omega_{1}}$ implies
$\abs{q'(2^{-j\scp/2}\xi_{1} + 2\omega_{1})}^{-\dep} \le
\norm[\infty]{q'\omega}^{-\dep} $. Therefore,
\begin{multline*} I_1 \le \norm[\infty]{q'\omega}^{-\dep} \sum_{j\in
    J_1} \min\set{\abs{q2^{-j\scp/2}\xi_{1}}^{\vap-2\dep},1}\min
  \set{\abs{q'2^{-j\scp/2}\xi_{1}}^{-\dep},1} \\ \frac qr \sum_{k_1
    \in \Z} \frac rq \min\set{\abs{\frac rq
      \left(k_1+2^{j\frac{\scp-1}{2}}\tfrac{\xi_{2}}{\xi_{1}}\right)}^{-{\dep}},1}
  \cdot \frac qs \sum_{k_2 \in \Z} \frac sq \min\set{\abs{\frac sq
      \left(k_2+2^{j\frac{\scp-1}{2}}\tfrac{\xi_{3}}{\xi_{1}}\right)}^{-{\dep}},1}.
\end{multline*}
By the estimate (\ref{eq:sk2}) with $y=r/q\le 1$ (and $y=s/q \le 1$) as
constant, we can bound the sum over $k_1$ (and $k_2$), leading to
\begin{equation*} 
  I_1 \le \norm[\infty]{q'\omega}^{-\dep} \sum_{j\in
    J_1} \min\set{\abs{q2^{-j\scp/2}\xi_{1}}^{\vap-2\dep},1}\min
  \set{\abs{q'2^{-j\scp/2}\xi_{1}}^{-\dep},1} \, \frac qr C(\dep) \,
  \frac qs C(\dep).
\end{equation*}
Taking the supremum over $\xi_1 = \eta_1/q \in \R$ and using equations
(\ref{eq:quotientsum-finite}) and (\ref{eq:quotientsum-inf}) as in the
proof of Proposition~\ref{prop:Lsupfinite} yields
\begin{align} 
  I_1 &\le \frac{q^2}{rs} C(\dep)^2 \norm[\infty]{q'\omega}^{-\dep}
  \sup_{\eta_1\in\R} \sum_{j\in J_1}
  \min\set{\abs{2^{-j\scp/2}\eta_{1}}^{\vap-2\dep},1}\min
  \set{\abs{q'q^{-1}2^{-j\scp/2}\eta_{1}}^{-\dep},1} \nonumber \\
  &\le \frac{q^2}{rs} C(\dep)^2 \norm[\infty]{q'\omega}^{-\dep} \Bigl(
  \ceil{\frac{2}{\alpha}\log_{2}\Bigl(\frac{q}{q'}\Bigr)}
  +\frac{1}{1-2^{-\vap+2\dep}}+1\Bigr). \label{eq:I1-case1}
\end{align}

\paragraph{Case II} We now assume that
$\norm[\infty]{\omega} = \abs{\omega_{2}} \ge
\abs{2^{-j\scp/2}\xi_{1}}$. For $\dep=\dep'+\dep''$, $\dep>\dep'+2>3$,
$\dep'>1$, $\dep''>2$ by \eqref{eq:ineq2} 
\begin{multline*}
  \min\set{\abs{\frac rq
      \left(k_1+2^{j\frac{\scp-1}{2}}\tfrac{\xi_{2}}{\xi_{1}}\right)}^{-{\dep}},1}
  \min\set{ \abs{\frac{1+\frac{2\omega_{1}}{2^{-j\scp/2}\xi_{1}}}{
        \frac rq \left(\frac{2\omega_{2}}{2^{-j\scp/2}\xi_{1}} +k_1 +
          2^{j\frac{\scp-1}{2}}\frac{\xi_{2}}{ \xi_{1}}\right) }}^\dep,1} \\
  \le 2^{\dep''}
  \abs{\frac{r}{q}\frac{2\omega_2}{2^{-j\scp/2}\xi_1}}^{-\dep''}
  \min\set{\abs{\frac rq
      \left(k_1+2^{j\frac{\scp-1}{2}}\tfrac{\xi_{2}}{\xi_{1}}\right)}^{-{\dep'}},1}
  \max{\set{\abs{1+\frac{2\omega_1}{2^{-j\scp/2}\xi_1}}^{\dep''},1}}
\end{multline*}
Applied to \eqref{eq:pf_estimateR2} this yields
\begin{multline} I_1 \le \sum_{j\in J_1}
  \min\set{|q2^{-j\scp/2}\xi_{1}|^{\vap-2\dep},1}\min
  \set{|q'2^{-j\scp/2}\xi_{1}|^{-\dep},1} \\ \cdot \min
  \set{\abs{q(2^{-j\scp/2}\xi_{1} + 2\omega_{1})}^{\vap-2\dep},1}
  \min\set{ \abs{q'(2^{-j\scp/2}\xi_{1} + 2\omega_{1})}^{-\dep},1} \\
  \sum_{k_1 \in \Z} 2^{\dep''}
  \abs{\frac{r}{q}\frac{2\omega_2}{2^{-j\scp/2}\xi_1}}^{-\dep''}
  \min\set{\abs{\frac rq
      \left(k_1+2^{j\frac{\scp-1}{2}}\tfrac{\xi_{2}}{\xi_{1}}\right)}^{-{\dep'}},1}
  \max{\set{\abs{1+\frac{2\omega_1}{2^{-j\scp/2}\xi_1}}^{\dep''},1}}
  \\ \sum_{k_2 \in \Z} \min\set{\abs{\frac sq
      \left(k_2+2^{j\frac{\scp-1}{2}}\tfrac{\xi_{3}}{\xi_{1}}\right)}^{-{\dep}},1}.
\end{multline}
Hence, by estimate~(\ref{eq:sk2}), 
\begin{multline} 
  I_1 \le 2^{\dep''} \frac{q^2}{rs}C(\dep)C(\dep')
  \normsmall[\infty]{2\tfrac{r}{q}w}^{-\dep''} \sum_{j\in J_1}
  \min\set{|q2^{-j\scp/2}\xi_{1}|^{\vap-2\dep},1}\min
  \set{|q'2^{-j\scp/2}\xi_{1}|^{-\dep},1} \\ \cdot \min
  \set{\abs{q(2^{-j\scp/2}\xi_{1} + 2\omega_{1})}^{\vap-2\dep},1}
  \min\set{ \abs{q'(2^{-j\scp/2}\xi_{1} + 2\omega_{1})}^{-\dep},1} \\
  \abs{{2^{-j\scp/2}\xi_1}}^{\dep''}
  \max{\set{\abs{1+\frac{2\omega_1}{2^{-j\scp/2}\xi_1}}^{\dep''},1}}.
  \label{eq:case2}
\end{multline}

We further split Case~II into the following two subcases: $1 \le
|1+\frac{2\omega_{1}}{2^{-j\scp/2} \xi_{1}}|$ and $1 >
|1+\frac{2\omega_{1}}{2^{-j\scp/2} \xi_{1}}|$. Now, in case $1 \le
|1+\frac{2\omega_{1}}{2^{-j\scp/2} \xi_{1}}|$, then obviously
\[
\left|{2^{-j\scp/2}\xi_1} \right|^{\dep''}\max\left\{
  1,\left|1+\frac{2\omega_1}{2^{-j\scp/2}\xi_1}
  \right|^{\dep''}\right\} \le \left|2^{-j\scp/2}\xi_1+2\omega_1
\right|^{\dep''},
\]
which used in \eqref{eq:case2} yields
\begin{multline*}
  I_1 \le \frac{q^2}{rs}C(\dep)C(\dep')
  \normsmall[\infty]{\tfrac{r}{q}w}^{-\dep''} \sum_{j\in J_1}
  \min\set{|q2^{-j\scp/2}\xi_{1}|^{\vap-2\dep},1}\min
  \set{|q'2^{-j\scp/2}\xi_{1}|^{-\dep},1} \\ \cdot \min
  \set{\abs{q(2^{-j\scp/2}\xi_{1} + 2\omega_{1})}^{\vap-2\dep},1}
  \min\set{ \abs{q'(2^{-j\scp/2}\xi_{1} + 2\omega_{1})}^{-\dep},1}
  \left|2^{-j\scp/2}\xi_1+2\omega_1 \right|^{\dep''},
\end{multline*}
Hence, by inequality \eqref{eq:ineq3} with $\iota=\vap-\dep$, \ie
\[ \min \set{\abs{q(2^{-j\scp/2}\xi_{1} +
    2\omega_{1})}^{\vap-2\dep},1} \min\set{
  \abs{q'(2^{-j\scp/2}\xi_{1} + 2\omega_{1})}^{-\dep},1}
\left|2^{-j\scp/2}\xi_1+2\omega_1 \right|^{\dep''} \le
(q')^{-\dep''},\] 
we arrive at
\begin{align}
  I_1 &\le \frac{q^2}{rs}C(\dep)C(\dep')
  \normsmall[\infty]{\tfrac{q'r}{q}w}^{-\dep''} \sum_{j\in J_1}
  \min\set{|q2^{-j\scp/2}\xi_{1}|^{\vap-2\dep},1}\min
  \set{|q'2^{-j\scp/2}\xi_{1}|^{-\dep},1}\nonumber \\
  &\le \frac{q^2}{rs}C(\dep)C(\dep')
  \normsmall[\infty]{\tfrac{q'r}{q}w}^{-\dep''} \Bigl(
  \ceil{\frac{2}{\alpha} \log_{2}\Bigl(\frac{q}{q'}\Bigr)}
  +\frac{1}{1-2^{-\vap+2\dep}}+1\Bigr).
  \label{eq:I1-case2a}
\end{align}

On the other hand, if $1 \ge
|1+\frac{2\omega_{1}}{2^{-j\scp/2}\xi_{1}}|$, then,
\[
\min \set{ \absbig{q(2^{-j\tfrac{\scp}{2}}\xi_{1} + 2\omega_{1})}^{{\vap -
    \dep}},1} \min\set{ \absbig{q'(2^{-j\tfrac{\scp}{2}}\xi_{1} +
  2\omega_{1})}^{-\dep},1} \max\set{
  \absbig{1+\tfrac{2\omega_1}{a_j\xi_1}}^{\dep''},1} \le 1,
\]
for all $j \ge 0$. Hence from \eqref{eq:case2}, by employing
inequality \eqref{eq:ineq4}, we arrive at
\begin{align} 
  I_1 &\le \frac{q^2}{rs}C(\dep)C(\dep')
  \normsmall[\infty]{\tfrac{r}{q}w}^{-\dep''} \sum_{j\in J_1}
  \min\set{|q2^{-j\tfrac{\scp}{2}}\xi_{1}|^{\vap-2\dep},1}\min
  \set{|q'2^{-j\tfrac{\scp}{2}}\xi_{1}|^{-\dep},1}
  \abs{{2^{-j\tfrac{\scp}{2}}\xi_1}}^{\dep''}\nonumber \\
  &\le\frac{q^2}{rs}C(\dep)C(\dep')
  \normsmall[\infty]{\tfrac{r}{q}w}^{-\dep''} \sum_{j\in J_1}
  (q')^{-\dep''}\min\set{|q2^{-j\tfrac{\scp}{2}}\xi_{1}|^{\vap-2\dep+\dep''},1}\min
  \set{|q'2^{-j\tfrac{\scp}{2}}\xi_{1}|^{-\dep'},1}\nonumber \\
  &\le \frac{q^2}{rs}C(\dep)C(\dep')
  \normsmall[\infty]{\tfrac{q'r}{q}w}^{-\dep''} \Bigl(
  \ceil{\frac{2}{\alpha} \log_{2}\Bigl(\frac{q}{q'}\Bigr)}
  +\frac{1}{1-2^{-\vap+2\dep-\dep''}}+1\Bigr).\label{eq:I1-case2b}
\end{align}

\paragraph{Case III}  This case is similar to Case~II
and the estimates from Case~II hold with the obvious modifications. We
therefore skip the proof.

\medskip

We next estimate $I_{2}$. First, notice that the inequality
\eqref{eq:pf_estimateR2} still holds for $I_2$ with the index set
$J_1$ replaced by $J_2$. Therefore, we obviously have
\begin{multline*} I_2 \le \sum_{j\in J_2}
  \min\set{|q2^{-j\scp/2}\xi_{1}|^{\vap-2\dep},1}\min
  \set{|q'2^{-j\scp/2}\xi_{1}|^{-\dep},1} \\
  \sum_{k_1 \in \Z} \min\set{\abs{\frac rq
      \left(k_1+2^{j\frac{\scp-1}{2}}\tfrac{\xi_{2}}{\xi_{1}}\right)}^{-{\dep}},1}
  \sum_{k_2 \in \Z} \min\set{\abs{\frac sq
      \left(k_2+2^{j\frac{\scp-1}{2}}\tfrac{\xi_{3}}{\xi_{1}}\right)}^{-{\dep}},1},
\end{multline*}
by \eqref{eq:sk2},
\begin{align} I_1 &\le \frac{q^2}{rs} C(\dep)^2 \sum_{j\in J_2}
  \min\set{|q2^{-j\scp/2}\xi_{1}|^{\vap-2\dep},1}\min
  \set{|q'2^{-j\scp/2}\xi_{1}|^{-\dep},1}  \nonumber  \\
  &\le \frac{q^2}{rs} C(\dep)^2 \norm[\infty]{q'
    \omega}^{-\dep}. \label{eq:I2}
\end{align}

\smallskip

Summarising, using \eqref{eq:Gamma-esssupI}, \eqref{eq:I1-case1}, and \eqref{eq:I2}, we have that
 \begin{align*} \Gamma(2\omega)
\le  \frac{q^2}{rs} \frac{C(\dep)^2}{\norm[\infty]{q'\omega}^{\dep}}
\Bigl( \ceil{\log_{2}\Bigl(\frac{q}{q'}\Bigr)}
+\frac{1}{1-2^{-\vap+2\dep}}+\frac{1}{1-2^{-\dep}}\Bigr) + \frac{q^2}{rs} \frac{C(\dep)^2}{\norm[\infty]{q'
       \omega}^{\dep}} \frac{1}{1-2^{-\dep}}, 
 \end{align*}
 whenever $\norm[\infty]{\omega}=\abs{\omega_1}$, and by
 \eqref{eq:I1-case2a}, \eqref{eq:I1-case2b}, and \eqref{eq:I2},
 \begin{align*}
   \Gamma(2\omega) &\le \frac{q^2}{rs}\frac{C(\dep)C(\dep')}{
     \normsmall[\infty]{\tfrac{q'\min\{r,s\}}{q}w}^{\dep''}} \Bigl(
   \frac{4}{\alpha}\ceil{ \log_{2}\Bigl(\frac{q}{q'}\Bigr)}
   +\frac{1}{1-2^{-\vap+2\dep}}+\frac{1}{1-2^{-\vap+2\dep-\dep''}}+2\Bigr)\\
   &\phantom{\le} + \frac{q^2}{rs} \frac{C(\dep)^2}{\norm[\infty]{q'
       \omega}^{\dep}},
 \end{align*}
 otherwise. We are now ready to prove the claimed estimate for $R(c)$.
 Define
 \[ \mathcal{Q} = \setprop{m \in \Z^3 }{ |m_1| > |m_2| \text{ and }
   \abs{m_1}>\abs{m_3}}, \] 
and
\[ \tilde{\mathcal{Q}} = \setprop{m \in \Z^3 }{ c_1^{-1}|m_1| >
  c_2^{-1}|m_2| \text{ and } c_1^{-1}|m_1| > c_2^{-1}|m_3| }.
\]
If $m \in \tilde{\mathcal{Q}}$, that is, if $c_1^{-1}|m_1| >
c_2^{-1}|m_2|$ and $c_1^{-1}|m_1| > c_2^{-1}|m_2|$, then
\begin{multline*}
  \Gamma(\pm M_c^{-1}m) \le \frac{q^2}{rs}
  \frac{C(\dep)^2}{\norm[\infty]{m}^{\dep}} \left(\frac{2c_1}{q'}
  \right)^\dep \Bigl( \ceil{\log_{2}\Bigl(\frac{q}{q'}\Bigr)}
  +\frac{1}{1-2^{-\vap+2\dep}}+\frac{2}{1-2^{-\dep}}\Bigr) \\= (T_1+T_3)
  \norm[\infty]{m}^{-\dep}
\end{multline*}
If on the other hand $m\in\tilde{\mathcal{Q}}^c\setminus \{0\}$, that
is, if $c_1^{-1}|m_1| \le c_2^{-1}|m_2|$ or $c_1^{-1}|m_1| \le
c_2^{-1}|m_3|$ with $m \neq 0$, then
\begin{multline*}
  \Gamma(\pm M_c^{-1}m) \le \frac{q^2}{rs}\frac{C(\dep)C(\dep')}{
    \normsmall[\infty]{m}^{\dep''}} \left(\frac{2qc_2}{q'r}
  \right)^{\dep''} \Bigl( 2\ceil{ \log_{2}\Bigl(\frac{q}{q'}\Bigr)}
  +\frac{1}{1-2^{-\vap+2\dep}}+\frac{1}{1-2^{-\dep}}+\\\frac{1}{1-2^{-\vap+2\dep-\dep''}}+\frac{1}{1-2^{-\dep'}}\Bigr)
  + \frac{q^2}{rs} \frac{C(\dep)^2}{\norm[\infty]{ m}^{\dep}}
  \left(\frac{2c_1}{q'} \right)^{\dep} \frac{1}{1-2^{-\dep}}=(T_2+T_3)
  \,\normsmall[\infty]{m}^{\dep''},
\end{multline*}
Therefore, we obtain
\begin{align} \nonumber
  R(c) &= \sum_{m \in \Z^3\setminus\{0\}}\left(
    \Gamma(M_c^{-1}m)\,\Gamma(-M_c^{-1}m)\right)^{1/2}\\ \label{eq:Rc}
  &\le \left(\sum_{m \in \tilde{{\mathcal Q}}}
    T_1\|m\|^{-\dep}_{\infty}+T_3\|m\|^{-\dep}_{\infty}\right) +
  \left(\sum_{m \in \tilde{{\mathcal Q}}^c\backslash\{0\}}
    T_2\|m\|^{-\dep''}_{\infty}+T_3\|m\|^{-\dep}_{\infty}\right)
\end{align}
Notice that, since $\tilde{{\mathcal Q}} \subset {\mathcal Q}$,
\[
\sum_{m \in \tilde{{\mathcal Q}}} \|m\|^{-\dep} \le \sum_{m \in
  {{\mathcal Q}}} \|m\|^{-\dep}.
\]
Also, we have
\[
\sum_{m \in \tilde{{\mathcal Q}}^c\backslash\{0\}} \|m\|^{-\dep''} \le
3\min{\set{\ceil{\frac{c_1}{c_2}},2}} \sum_{m \in {{\mathcal
      Q}^c\backslash\{0\}}} \|m\|^{-\dep''}.
\]
Therefore, \eqref{eq:Rc} can be continued by
\begin{equation*}
  R(c) \le T_3 \sum_{m \in \Z^3\setminus\{0\}}\|m\|_{\infty}^{-\dep} + T_1 \sum_{m \in {\mathcal Q}}
\|m\|_{\infty}^{-\dep}
  +3\min{\set{\ceil{\frac{c_1}{c_2}},2}} T_2 \sum_{m \in {{\mathcal Q}^c\backslash\{0\}}} \|m\|^{-\dep''}.
\end{equation*}
To provide an explicit estimate for the upper bound of $R(c)$, we
compute $\sum_{m \in {\mathcal Q}} \|m\|_{\infty}^{-\dep}$ and
$\sum_{m \in {\mathcal Q}^c\backslash} \|m\|_{\infty}^{-\dep}$ as
follows:
\begin{align*}
  \sum_{m \in \Z^3\setminus\{0\}} \norm[\infty]{m}^{-\dep} &=
  \sum_{d=1}^{\infty}(24d^2+2) d^{-\dep} = 24 \zeta(\dep-2) + 2
  \zeta(\dep)
\end{align*}
where $(2d+1)^3-(2d+1)^3=24d^2+2$ is the number of lattice points in
$\Z^3$ at distance $d$ (in max-norm) from origo. Further,
\begin{align*}
  \sum_{m \in {\mathcal Q}} \norm[\infty]{m}^{-\dep} &=
  2\sum_{m_1=1}^{\infty}(2m_1-1)^2 m_1^{-\dep} = \sum_{m_1=1}^{\infty}
  (8m_1^{2-\dep} -8m_1^{1-\dep}+2 m_1^{-\dep}) \\ &= 8 \zeta(\dep-2) -
  4 \zeta(\dep-1) + 2 \zeta(\dep)
\end{align*}
and
\begin{align*}
  \sum_{m \in {\mathcal Q}^c\setminus \{0\}} \norm[\infty]{m}^{-\dep}
  &= 24 \zeta(\dep-2) + 2 \zeta(\dep) - \bigl(8 \zeta(\dep-2) - 4
  \zeta(\dep-1) + 2 \zeta(\dep) \bigr) \\ &= 16 \zeta(\dep-2) - 4
  \zeta(\dep-1),
\end{align*}
which completes the proof.


\begin{thebibliography}{99.}%

\bibitem{MR0450957} 
R. A. Adams, \emph{Sobolev spaces},
Pure and Applied Mathematics, Vol. 65.
Academic Press,  New York-London,  1975. 
		
\bibitem{twoDwavelet1993}
J. P. Antoine, P. Carrette, R. Murenzi, and B. Piette, {\em Image analysis
with two-dimensional continuous wavelet transform}, Signal Process.
{\bf 31} (1993), 241--272.

\bibitem{directional1992}
R. H. Bamberger and M. J. T. Smith, {\em A filter bank for the directional
decomposition of images: theory and design}, IEEE Trans. Signal Process.
{\bf 40} (1992), 882--893.

\bibitem{borup-nielsen}
L. Borup and M. Nielsen, \emph{Frame decomposition of decomposition spaces}, J. Fourier Anal. 
Appl. {\bf 13} (2007), 39--70.

\bibitem{CDDY06}
E. J. Cand\'es, L. Demanet, D. Donoho, L. Ying, {\em Fast discrete
  curvelet transforms}, Multiscale Model. Simul.  {\bf 5}  (2006), 861--899.

\bibitem{CD99}
E. J. Cand\'es and D. L. Donoho, {\em Curvelets -- a suprisingly effective
nonadaptive representation for objects with edges}, in Curve and Surface
Fitting: Saint-Malo 1999, edited by A. Cohen, C. Rabut, and L. L. Schumaker, Vanderbilt University Press, Nashville, TN,
2000.

\bibitem{CD04}
E. J. Cand\'es and D. L. Donoho,
\emph{New tight frames of curvelets and optimal representations of objects with piecewise $C^{2}$ singularities},
Comm. Pure and Appl. Math. \textbf{56} (2004), 216--266.


\bibitem{surflets09}
V.~Chandrasekaran, M. B. Wakin, D. Baron R. G. Baraniuk, {\em  Representation and compression of multidimensional
piecewise functions
 using surflets},
 IEEE Trans. Inform. Theory  {\bf 55}  (2009),  374--400.

\bibitem{DKST09}
S. Dahlke, G. Kutyniok, G. Steidl, and G. Teschke,
{\em Shearlet coorbit spaces and associated Banach frames},
Appl. Comput. Harmon. Anal. {\bf 27} (2009), 195--214.

\bibitem{DST09}
S. Dahlke, G. Steidl, and G. Teschke, {\em The continuous shearlet
transform in arbitrary space dimensions},
J. Fourier Anal. Appl.  {\bf 16}  (2010),  340--364.

\bibitem{DST11}
S. Dahlke, G. Steidl and G. Teschke, {\em Shearlet Coorbit Spaces: Compactly Supported Ana- 
lyzing Shearlets, Traces and Embeddings}, J. Fourier Anal. Appl., to appear.

\bibitem{Dau92}
I. Daubechies,
{\em Ten Lectures on Wavelets}, SIAM, Philadelphia, 1992.

\bibitem{DV05}
M. N. Do and M. Vetterli,
{\em The contourlet transform: an efficient directional multiresolution image representation},
IEEE Trans. Image Process. {\bf 14} (2005), 2091--2106.

\bibitem{Don01}
D. L. Donoho,
{\em Sparse components of images and optimal atomic decomposition},
Constr. Approx. {\bf 17} (2001), 353--382.

\bibitem{GKL06} 
K. Guo, G. Kutyniok, and D. Labate, {\em Sparse
    multidimensional representations using anisotropic dilation and
    shear operators}, in Wavelets and Splines (Athens, GA, 2005),
  Nashboro Press, Nashville, TN, 2006, 189--201.

\bibitem{GL10} 
K. Guo and D. Labate, \emph{Analysis and detection of
    surface discontinuities using the 3D continuous shearlet
    transform}, Appl. Comput. Harmon. Anal., to appear.

\bibitem{GL07}
K. Guo and D. Labate,
\emph{Optimally sparse multidimensional representation using shearlets},
SIAM J. Math Anal. \textbf{39} (2007), 298--318.

\bibitem{GL10_3d}
K. Guo and D. Labate,
\emph{Optimally sparse representations of 3D data with $C^2$ surface
  singularities using Parseval frames of shearlets}, preprint.

\bibitem{GL10_short}
K. Guo and D. Labate, 
\emph{Optimally sparse 3D approximations using shearlet representations}, Electron. Res. Announc.
Math. Sci. {\bf 17} (2010), 125--137.

\bibitem{KKL10a}
P. Kittipoom, G. Kutyniok, and W.-Q Lim,
{\em Construction of compactly supported shearlet frames},
Constr. Approx., to appear.

\bibitem{KL07}
G. Kutyniok and D. Labate,
{\em Construction of regular and irregular shearlets},
J. Wavelet Theory and Appl. \textbf{1} (2007), 1--10.

\bibitem{KLL10}
G. Kutyniok, J. Lemvig, and W.-Q Lim,
{\em Compactly supported shearlets}, in Approximation Theory XIII (San Antonio, TX, 2010), Springer, to appear.

\bibitem{KLL11}
G. Kutyniok, J. Lemvig, and W.-Q Lim,
{\em Optimally sparse approximation and shearlets}, in Shearlets: Multiscale Analysis for
Multivariate Data, edited by D. Labate and G. Kutyniok, Springer, to appear.

\bibitem{KL10}
G. Kutyniok and W.-Q Lim,
{\em Compactly supported shearlets are optimally sparse}, J. Approx. Theory, to appear.

\bibitem{LLKW05}
D. Labate, W.-Q Lim, G. Kutyniok, and G. Weiss.
{\em Sparse multidimensional representation using shearlets},
in Wavelets XI, edited by M. Papadakis, A. F. Laine, and M. A.
Unser, SPIE Proc. {\bf 5914}, SPIE, Bellingham, WA, 2005, 254--262,

\bibitem{Lim09}
W.-Q Lim, {\em The discrete shearlet transform: A new
directional transform and compactly
supported shearlet frames}, IEEE Trans. Image Process.
{\bf 19} (2010), 1166--1180.

\bibitem{Lu-Do}
Y. Lu and M.N. Do, {\em Multidimensional directional filterbanks and surfacelets}, IEEE Trans. 
Image Process. {\bf 16} (2007) 918--931.

\bibitem{bandelets}
E. L. Pennec and S. Mallat, {\em Sparse geometric image representations
with bandelets}, IEEE Trans. Image Process. {\bf 14} (2005), 423--438.

\bibitem{steer1992}
E. P. Simoncelli, W. T. Freeman, E. H. Adelson, D.
  J. Heeger, {\em Shiftable multiscale transforms}, IEEE Trans.
  Inform. Theory {\bf 38} (1992), 587--607.
\end{thebibliography}
\end{document}